\theoremstyle{plain}
\newtheorem{theorem}{Theorem}[section]
\newtheorem*{nth}{Theorem}
\newtheorem{proposition}{Proposition}[section]
\newtheorem{claim}[proposition]{Claim}
\newtheorem{lemma}[proposition]{Lemma}
\newtheorem{defi}[proposition]{Definition}
\newtheorem{corollary}[proposition]{Corollary}
\newtheorem*{remark}{Remark}
\numberwithin{equation}{section}
\newcommand{\RR}{\mathbb{R}}
\newcommand{\p}{\partial}
\newcommand{\De}{\Delta}
\newcommand{\CD}{\mathcal{D}}
\newcommand{\CH}{\mathcal{H}}
\newcommand{\D}{{\boldsymbol  D}}
\newcommand{\CN}{\mathcal{N}}
\newcommand{\intl}{\int\limits}
\newcommand{\lf}{\left}
\newcommand{\rt}{\right}
\DeclareMathSymbol{\Gamma}{\mathalpha}{letters}{"00}
\DeclareMathSymbol{\Lambda}{\mathalpha}{letters}{"03}
\begin{document}

\author{P. Germain, N. Masmoudi and J. Shatah}

\title{Global solutions for the gravity water waves equation in dimension 3}

\begin{abstract}
We show existence of global solutions for the gravity water waves equation in dimension 3, in the case of small data. The proof combines energy estimates, which yield control of $L^2$ related norms, with dispersive estimates, which give decay in $L^\infty$. To obtain these dispersive estimates, we use an analysis in Fourier space; the study of space and time resonances is then the crucial point.
\end{abstract}
\maketitle

\section{Introduction}

We consider the three-dimensional irrotational water wave problem in the presence of the gravity~$g$. The velocity of the fluid is denoted by $v$. We assume that the domain of the fluid is given by  $\Omega = \{ (x,z) = (x_1,x_2,z) \in \mathbb{R}^3 ,  z\leq h(x,t) \}$, where the graph of the  function $h$, i.e.
$S= \{ (x,h(x,t)),  x \in \mathbb{R}^2\}$  represents the free boundary of the fluid which moves by the normal velocity of the  fluid.  In this setting the Euler equation and the boundary conditions are  given by

\begin{align*}
&\begin{cases}
\tag{E}
\D_tv\stackrel{def}{=}\p_t v + \nabla_v v = -\nabla p -  g e_3 \quad  (x, z) \in \Omega,\\
\nabla \cdot v =0 \quad (x, z) \in \Omega,\\
\end{cases}\\
&
\begin{cases}\tag{BC}
\p_t h+ v \cdot\nabla_{(x,z)} (h-z)=0 \quad x\in \RR^2 ,\\
p|_{S} =  0.
\end{cases}
\end{align*}

Since the flow is assumed to be irrotational, the Euler equation can be reduced to an equation  on the boundary and thus the system of equations (E--BC) reduces to a system defined on $S$. This is achieved by introducing the potential ${\psi_\CH}$ where $v= \nabla \psi_\CH$. Denoting the trace of the potential on the free boundary by $\psi(x,t) = {\psi_\CH} (x,h(x,t),t)$,  the system of equations   for $\psi$ and $h$ are \cite{sulemsulem}
\[
\tag{WW}
\left\{
\begin{array}{l} \partial_t h = G(h) \psi \\
\partial_t \psi = -g h -\frac{1}{2} |\nabla \psi|^2 + \frac{1}{2(1+|\nabla h|^2)} \left(G(h) \psi + \nabla h \cdot \nabla \psi \right)^2 \\
(h,\psi)(t=2) = (h_0,\psi_0).  \end{array}
\right.
\]
where $G(h) = \sqrt{1 + |\nabla h|^2} \CN$, $\CN$ being the Dirichlet-Neumann operator associated with $\Omega$.

Before stating our result we introduce the Calderon operator $\Lambda \overset{def}{=} |D|$,  the complex function $u \overset{def}{=}  h + i \Lambda^{1/2} \psi$,  its initial data $u_0 \overset{def}{=} h_0 + i \Lambda^{1/2} \psi_0$, and its profile $f \overset{def}{=}  e^{it\Lambda^{1/2}}u$. Then 
\begin{nth} Let $\delta$ denote a small constant and $N$ a big enough integer. Define the norm
$$
||u||_X\stackrel{def}{=} \sup_{t\geq 2} \quad t \left\| u  \right\|_{W^{4,\infty}} +  t^{-\delta}\left\|  u \right\|_{H^{N}} +  t^{-\delta} \left\| x f \right\|_{L^2} +\| u \|_{L^2} \,\,.
$$
There exists an $\varepsilon>0$ such that if the data satisfies $\displaystyle \| e^{it\Lambda^{1/2}} u_0 \|_X < \varepsilon$, then there exists a unique global solution $u$ of $(WW)$ such that $\displaystyle \|u\|_X \lesssim \varepsilon$.
\end{nth}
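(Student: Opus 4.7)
The plan is a standard bootstrap/continuity argument on the quantity $\|u\|_X$. Local well-posedness provides a solution on some maximal interval, and it suffices to show that if $\|u\|_X \leq 2\varepsilon$ on $[2,T)$ then in fact $\|u\|_X \leq C\varepsilon^{3/2}$ (or some similar improvement) on the same interval, with $C$ independent of $T$. Each of the four pieces of the $X$-norm requires a separate estimate, closed simultaneously by using the $1/t$ pointwise decay to absorb nonlinear contributions against the slowly growing $t^\delta$ Sobolev and weighted norms. The $L^2$ control follows essentially from the Hamiltonian structure of (WW), and the $H^N$ growth estimate comes from symmetrized water-waves energy estimates: any cubic (and higher) remainder is tamed by the $t^{-1}$ bound on $\|u\|_{W^{4,\infty}}$, which renders $\int_2^t \|u\|_{W^{4,\infty}} ds \lesssim \log t \lesssim t^\delta$.

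To control the weighted norm $\|xf\|_{L^2}$, one writes, in Fourier, $\widehat{xf} = i\nabla_\xi \hat f$, and derives an equation for $\nabla_\xi \hat f$ by differentiating the Duhamel formula for the profile. The transport term produces contributions involving $s\nabla_\xi \phi$ times the bilinear symbol (with $\phi(\xi,\eta) = |\xi|^{1/2} \pm |\xi-\eta|^{1/2} \pm |\eta|^{1/2}$), which must be estimated using the other three norms; the factor of $s$ is the delicate point and is compensated by pointwise decay.

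The main obstacle, and the heart of the proof, is the pointwise bound $\|u(t)\|_{W^{4,\infty}} \lesssim \varepsilon/t$. After reducing (WW) to an equation of Schr\"odinger type for the complex unknown $u$, the Duhamel formula reads
$$\hat f(t,\xi) = \hat u_0(\xi) + \sum_{\pm,\pm} \int_2^t \int_{\RR^2} e^{is\phi_{\pm,\pm}(\xi,\eta)} m_{\pm,\pm}(\xi,\eta) \, \hat f^{\pm}(s,\eta) \, \hat f^{\pm}(s,\xi-\eta) \, d\eta\, ds,$$
where the $m_{\pm,\pm}$ are homogeneous bilinear symbols determined by the nonlinearity. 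A linear dispersive estimate reduces the $W^{4,\infty}$ bound to controlling $\hat f$ in a suitable (weighted) space. The quadratic integral is handled by the space/time resonance method: on the \emph{time-nonresonant} region $\{|\phi|\gtrsim 1\}$ one integrates by parts in $s$ via $e^{is\phi}=(i\phi)^{-1}\partial_s e^{is\phi}$, trading a derivative in time for a power of $\phi^{-1}$; on the \emph{space-nonresonant} region $\{|\nabla_\eta\phi|\gtrsim 1\}$ one integrates by parts in $\eta$, which produces the symbol of $xf$ and thus is controlled by the weighted norm. Both procedures degenerate only on the \emph{space-time resonant set} $\{\phi=0\}\cap\{\nabla_\eta \phi =0\}$; one verifies that for the gravity water-waves phase $|\xi|^{1/2}\pm|\eta|^{1/2}\pm|\xi-\eta|^{1/2}$ this set is sufficiently small (it reduces to $\eta\in\{0,\xi\}$) that a direct estimate near it, using the slow growth of the weighted and Sobolev norms together with the pointwise decay, suffices to close the bootstrap.

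The crucial difficulty is precisely the resonance analysis: dimension $3$ with dispersion $|\xi|^{1/2}$ is critical for quadratic nonlinearities (a naive Duhamel estimate yields a logarithmic divergence), so the null-structure-like gain coming from the vanishing of $m$ on the resonant set, together with the integration-by-parts decompositions and careful Littlewood–Paley/frequency-localized treatment of the symbols, is what makes global existence possible. The choice of $W^{4,\infty}$ (a few derivatives) and of $N$ large enough is dictated by the loss of derivatives incurred each time one integrates by parts in $\eta$ or estimates the symbols $m_{\pm,\pm}$, and $\delta$ is chosen small so that the $t^\delta$-growth of the high Sobolev and weighted norms is harmless when multiplied by integrable decay.
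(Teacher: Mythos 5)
Your overall framework --- a bootstrap on $\|u\|_X$, energy estimates for the $H^N$ piece closed by the $t^{-1}$ bound on $\|u\|_{W^{4,\infty}}$, weighted estimates obtained by applying $\partial_\xi$ to the Duhamel formula for the profile, and decay extracted by the space-time resonance method --- is indeed the paper's framework. But your proposal stops the resonance analysis at the quadratic level, and this is a genuine gap. First, for the quadratic phases the set you call the space-time resonant set, $\{\phi=0\}\cap\{\nabla_\eta\phi=0\}$, is not $\{\eta=0\}\cup\{\eta=\xi\}$ (that is the \emph{time}-resonant set $\mathscr{T}_{\pm,\pm}$); and what makes the quadratic terms tractable is not the smallness of any resonant set but the null structure: the symbols $m_1,m_2$ vanish (at order $1/2$) precisely where $\phi_{\pm,\pm}$ does, so $m_l/\phi_{\pm,\pm}$ is a bounded though non-smooth symbol and one can integrate by parts in time on the \emph{whole} quadratic integral --- a global normal form --- with no space-resonance analysis needed at this stage. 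Second, and more seriously, integrating by parts in $s$ turns $\partial_s\hat f$ into quadratic expressions, so the normal form generates cubic terms, and that is where the real difficulty of the problem lives: for the strongly resonant phases such as $\phi_{--+}=|\xi|^{1/2}-|\eta|^{1/2}-|\sigma|^{1/2}+|\xi-\eta-\sigma|^{1/2}$ the time-resonant set is five-dimensional with first-order vanishing of the phase (so any normal form is unbounded), while the space-resonant set $\{\xi=\eta=\sigma\}$ is \emph{contained} in it, so neither integration by parts in time nor in $(\eta,\sigma)$ alone closes the estimate; the ``direct estimate near the resonant set'' you invoke is exactly what fails there. The paper's way out is the identity $\partial_\xi\phi_{--+}=\mathcal{A}(\xi,\eta,\sigma)[\partial_\eta\phi_{--+},\partial_\sigma\phi_{--+}]$ valid near the resonant set, which converts the dangerous $s\,\partial_\xi\phi$ factor produced by the weight $x$ into $\eta,\sigma$-derivatives of the phase that can be integrated by parts, combined with a time-dependent frequency cut-off $\Theta(\sigma/s^{\delta_0})$ and an angular partition $\chi_I,\chi_{II},\chi_{III}$ around the resonant set; the $L^\infty$ decay of these contributions is then deduced by proving a \emph{bounded} (not $t^\delta$-growing) weighted estimate in a Lebesgue exponent slightly below $2$ and passing through the dispersive bound in $\dot{B}^{3/2}_{1,1}$. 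None of this cubic analysis appears in your proposal.

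A second omission: because the dispersion relation $|\xi|^{1/2}$ is not smooth, the multipliers $m_l/\phi_{\pm,\pm}$, their $\xi$-derivatives, and the iterated symbols $m_k(\xi,\eta)m_j(\xi-\eta,\sigma)/\phi_{\pm,\pm}(\xi,\eta)$ with their flag singularities lie outside the Coifman--Meyer class, so the standard bilinear and trilinear estimates you implicitly rely on do not apply; the paper must introduce the classes $\mathcal{B}_s$, $\widetilde{\mathcal{B}}_s$, $\mathcal{T}_s$, $\widetilde{\mathcal{T}}_s$ and prove their boundedness (including a flag-singularity theorem) before any of the estimates can be run. Without a treatment of the strongly resonant cubic terms and of these singular symbol classes, the argument does not close.
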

  
The estimates leading to the above theorem give easily that the solutions scatter.

\begin{corollary}\label{scat}
There exists a constant $C_0$ such that: under the conditions of the previous theorem, there exists $f_\infty \in H^{N-N C_0 \delta} \cap L^2(x^{2-C_0 \delta}dx)$ such that $\displaystyle f(t) \rightarrow f_\infty$ in $H^{N- N C_0 \delta} \cap L^2(x^{2-C_0 \delta}dx)$ as~$t \rightarrow \infty$.
\end{corollary}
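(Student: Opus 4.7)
The plan is to establish the Cauchy property of $t\mapsto f(t)$ in the space $H^{N-NC_0\delta}\cap L^2(|x|^{2-C_0\delta}\,dx)$ and deduce the limit $f_\infty$ by completeness. Writing $(WW)$ in terms of $u=h+i\Lambda^{1/2}\psi$ yields $\partial_t u+i\Lambda^{1/2}u=\CN(u,\bar u)$ with $\CN$ at least quadratic (and involving derivative loss), so that
$$ f(t_2)-f(t_1)=\int_{t_1}^{t_2} e^{is\Lambda^{1/2}}\CN(u,\bar u)\,ds. $$
It therefore suffices to control $\|\partial_s f\|_{H^{N-NC_0\delta}}$ and $\||x|^{1-C_0\delta/2}\partial_s f\|_{L^2}$ in $L^1$ of $s$.

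For the Sobolev part, the naive product inequality $\|\CN(u,\bar u)\|_{H^{N'}}\lesssim \|u\|_{W^{k,\infty}}\|u\|_{H^{N'+k}}$ combined with the Gagliardo--Nirenberg interpolation $\|u\|_{H^{s}}\lesssim \|u\|_{L^2}^{1-s/N}\|u\|_{H^N}^{s/N}$ and the $X$-norm bounds $\|u\|_{H^N}\lesssim \varepsilon s^\delta$, $\|u\|_{L^2}\lesssim \varepsilon$, $\|u\|_{W^{4,\infty}}\lesssim \varepsilon/s$ only yields $\|\partial_s f\|_{H^{N-NC_0\delta}}\lesssim \varepsilon^2 s^{-1+O(\delta)}$, which is on the wrong side of integrability. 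The extra decay must be taken from the space--time resonance analysis of the theorem: integration by parts in $s$ against the oscillatory phase $|\xi|^{1/2}\pm|\xi-\eta|^{1/2}\pm|\eta|^{1/2}$ (equivalently, a bilinear normal form $f\mapsto f+B(f,f)$) rewrites the Duhamel integral as boundary terms $e^{is\Lambda^{1/2}}B(u,u)$---which tend to zero in $H^{N-NC_0\delta}$ since $\|u(s)\|_{L^\infty}\to 0$---plus a cubic remainder which enjoys two factors of $\varepsilon/s$ and yields $O(\varepsilon^3 s^{-2+O(\delta)})$. Choosing $C_0$ large enough to absorb the $O(\delta)$ corrections makes the remainder $L^1$ in $s$.

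For the weighted $L^2$ piece I would apply the interpolation
$\||x|^{1-C_0\delta/2}g\|_{L^2}\leq \|g\|_{L^2}^{C_0\delta/2}\||x|g\|_{L^2}^{1-C_0\delta/2}$
to $g=f(t_2)-f(t_1)$, reducing the task to two estimates: $\||x|g\|_{L^2}\lesssim \varepsilon s^\delta$ from the $X$-norm bound on $\|xf\|_{L^2}$, and $\|g\|_{L^2}\to 0$ from the $L^2$ Cauchy estimate produced by the same normal-form reduction, now at the $L^2$ level (where the cubic remainder is plainly $\varepsilon^3/s^2$-integrable and the bilinear boundary term is $\varepsilon^2/s$-small). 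Together these supply convergence in $L^2(|x|^{2-C_0\delta}\,dx)$.

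The principal obstacle is therefore the normal-form/time-resonance step that converts the quadratic nonlinearity into an effectively cubic one: this is the very mechanism underpinning the main theorem and needs only to be recycled one level lower in regularity and weight. Everything else is interpolation together with the Cauchy criterion in the stated Banach space.
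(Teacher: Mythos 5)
Your plan is correct and its engine is the same as the paper's. The paper's proof is two lines: examining the terms produced by the normal form decomposition (\ref{butterfly1})--(\ref{butterfly5}) gives the dyadic $L^2$-Cauchy rate $\|f(t)-f(s)\|_2\lesssim t^{-\kappa}$ for $s<t<2s$, and this is then interpolated against the $t^\delta$ bound on $\|f\|_{H^N\cap L^2(x^2dx)}$, with $C_0$ chosen large compared with $1/\kappa$; your treatment of the weighted component is exactly this interpolation, and your $L^2$ smallness comes from the same normal-form reduction. Where you diverge is the Sobolev component: instead of interpolating between the $L^2$-Cauchy rate and the $H^N$ growth, you rerun the normal-form estimates directly at regularity $N-NC_0\delta$. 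This route also works, but it is the more expensive one: you must keep $NC_0\delta$ large enough to absorb the fixed derivative losses (the cubic symbols have order $5/2$, plus the losses from Lemma~\ref{pbound} and Sobolev embedding), and the endpoint justifications need care, since Theorems~\ref{bilinearbound} and~\ref{trilinearbound} exclude $L^\infty$; so the decay of the boundary term $e^{is\Lambda^{1/2}}B(u,u)$ in $H^{N-NC_0\delta}$ should be argued with large finite Lebesgue exponents via Lemma~\ref{pbound}, not by ``$\|u\|_\infty\to 0$'' alone, and the role of $C_0$ there is to create this derivative room rather than to make $s^{-2+O(\delta)}$ integrable (it already is for $\delta$ small). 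One small point in your weighted step: mere $\|f(t_2)-f(t_1)\|_2\to 0$ would not suffice, because $\|xf\|_2$ grows like $t^\delta$; you need the polynomial rate $t^{-\kappa}$ (which your $\varepsilon^2/s$ boundary bound and $\varepsilon^3/s^2$ remainder bound do supply), and then $C_0\gtrsim 1/\kappa$ together with a summation over dyadic time blocks closes the Cauchy argument, exactly as in the paper. The paper's interpolation has the advantage of requiring only the $L^2$ rate, so no bilinear or trilinear estimate has to be redone at high regularity; your version buys a direct description of $\partial_t f$ in $H^{N-NC_0\delta}$ at the cost of rechecking those estimates.
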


Some comments on the stated results.  Our approach in writing this article has not been to strive for a minimal $N$; rather, we chose to give a proof as concise as possible, by allowing at some places losses of derivatives.  The fact that the data for (WW) are given at $t=2$ does not have a deep significance, it is simply a matter of  convenience to avoid writing $1/(2 +t)$  when performing estimates, since the $L^\infty$ decay of $1/{t}$ given by the linear part of the equation is not integrable at $0$.  The precise conditions on the constants $\delta,N,\varepsilon$ under which the theorem holds are, as an inspection of the proof reveals: $\delta$ less than a universal constant $\delta_0$, $\frac{1}{N} \lesssim \delta$ and $\varepsilon \lesssim \delta$. Finally, we notice that the condition on $u_0$ in the theorem is satisfied if, for instance, $u_0 \in W^{6,1} \cap H^N \cap L^2(x^2 dx)$.

There is an extensive body of literature  on local well posedness and energy estimates for (WW) in Sobolev spaces starting with the work of  Nalimov \cite{na74}  for small data in 2~dimensions (see also H.~Yoshihara \cite{yo82}).   The first breakthrough in solving the well posedness for general data came in
the work of S.~J.~Wu \cite{wu97,wu99} who solved the problem in $2$ and $3$
dimensions.  There are many other works on local well posedness: we mention the work of  W.~Craig \cite{ca85},  T. J.~Beal, T.~Hou, and J.~Lowengrub
\cite{bhl93},  M.~Ogawa and A.~Tani \cite{ot02}, G.~Schneider and E. C.~Wayne \cite{sw02},  D.~Lannes \cite{la05},  D.~Ambrose  and N.~Mamoudi \cite{am06} \cite{am07},  and  P.~Zhang and Z.~Zhang \cite{zz06}.   We also mention that for the full system (E) there are well posedness results given by    D.~Christodoulou and H.~Lindblad \cite{cl00}, D. ~Coutand   and S.~Shkoller   \cite{cs05},   J.~Shatah and C.~Zeng \cite{sz06}. 
Recently S. J. Wu proved almost global existence for small data for  (WW)  in two space dimensions \cite{wu09}. 

Our  proof of global existence is based on  the method of space-time resonances introduced in \cite{GMS08}.   It is the  key notion to understand the long time behavior of solutions to the water waves problem.  The space time resonance method brings together the  normal forms  method \cite{sh85} and  the vector fields method \cite{kl85}, and is illustrated below.

\subsection*{Space-time resonances}

If one considers a nonlinear dispersive equation on $\RR^n$
\[
\p_t u = i L \Big(\frac 1 i \nabla \Big) u + au^2 + \dots ,
\]
solutions can be represented in Fourier space in terms of Duhamel's formula 
\begin{equation}
\label{duhamelversion}
\hat{f}(t,\xi) = \hat{u}_0(\xi) + \int _0^t \!\!\int e^{is\varphi(\xi,\eta)}a \hat{f}(s,\eta)
\hat{f}(s, \xi - \eta)  + \, \dots  \, d\eta  ds, 
\end{equation}
where $\varphi = -L(\xi)  + L(\eta)  +L(\xi - \eta)$,   $\hat{f}(\xi)=e^{- i L (\xi) t} \hat u$, and where $\hat u$ denotes the Fourier transform of $u$.  The reason for writing the equation in terms of $\hat f$ rather then $\hat u$ is that $\hat f$ is expected to be non oscillatory and thus for the quadratic terms the oscillatory behavior is restricted to $e^{is \varphi(\xi,\eta)}$.

\noindent $\bullet$ \textit{Time resonances} correspond to the classical notion of resonances, which is well known from ODE theory.
Thus quadratic time resonant frequencies are defined by
\[
\mathscr{T} = \{ (\xi, \eta ) ; \varphi(\xi,\eta) =0\} =
\{ (\xi, \eta ) ;
L(\eta) + L(\xi -\eta)
= L (\xi) \}.
\]
In other words, time resonances correspond to stationary phase in $s$ in~(\ref{duhamelversion}).
In the absence of time resonant frequencies, one can integrate by parts in time to eliminate the quadratic nonlinearity  in favor of a cubic nonlinearity.   The absence of time resonances for sufficiently high degree  usually indicates that the asymptotic behavior of solutions to the nonlinear equation is similar to  the asymptotic behavior of linear solutions.

However, for dispersive equations time resonance can only tell part of the story.  The reason is that time resonances are based upon plane wave solutions to the linear equation 
\(
e^{i (L (\xi) t + \xi \cdot x )},
\)
while we are usually interested in  spatially localized solutions.   For this reason we introduced the notion of space resonances. 
\medskip

\noindent $\bullet$ \textit{Space resonances} only occur in a dispersive PDE setting. The physical phenomenon underlying this notion is the following: wave packets corresponding to different frequencies may or not have the same group velocity; if they do, these wave packets are called space resonant and they might interact since they are localized in the same region of space. If they do not, they are not space resonant and their interaction will be very limited since their space-time supports are (asymptotically) disjoint. 
If one considers two linear solutions
$u_1$ and $u_2$ whose data are wave packets localized in
space around the origin
and in frequency around $\eta$ and $\xi - \eta$, respectively,
then the solutions $u_1$ and $u_2$ at large time~$t$
will be spatially localized around
\(
(-\nabla L (\eta) t)
\)
and $(- \nabla L (\xi-\eta) t) $.
Thus quadratic spatially  resonant frequencies  are  defined as 
\[
\mathscr{S} =
\{ (\xi, \eta) ; \nabla L(\eta) = \nabla L (\xi - \eta) \} = \{ (\xi, \eta) ; \nabla_\eta\varphi(\xi,\eta)=0\};
\]
in other words, space resonances correspond to stationary phase in $\eta$ in~(\ref{duhamelversion}). 
Spatial localization can be measured by computing weighted
norms, namely $\left\| x^k u(0,\cdot) \right\|_2 = \left\| \partial_\xi^k \widehat{u}(0,\cdot) \right\|_2$ for the data, or $\left\| x^k f(t,\cdot) \right\|_2 = \left\| \partial_\xi^k \widehat{f}(t,\cdot) \right\|_2$ for the solution.

This leads to the observation that if there are no space resonant  frequencies then one can capture the spatial localization of the solution by integrating by parts in $\eta$ in  equation \eqref{duhamelversion}.

\noindent $\bullet$ \textit{Space-time resonances} correspond to space and time resonances occurring at the same point in Fourier space. The space-time resonant set is given by
\[
\mathscr{R} = \mathscr{T} \cap \mathscr{S}.
\]

\medskip

It is clear from the above description that the  asymptotic behavior of nonlinear dispersive equations, of the type considered here,  is governed to a large part by the space-time resonant set $\mathscr{R}$.  Thus if  the set  $\mathscr{R}\ne \emptyset$ and the dispersive time   decay  is weak, then one faces serious  difficulties in proving that  small solutions exist globally and behave  asymptotically like  linear solutions.  However if the 
quadratic  nonlinearity  vanishes on $\mathscr{R}$ then  we say that the quadratic term is null and try to eliminate it by integration by parts in time or frequency. After dealing with the quadratic nonlinearity we  move on to study the cubic interactions where we have similar definitions of $\mathscr{T}$, $\mathscr{S}$, and $\mathscr{R}$.

\medskip

When applying this method to  the problem on hand, (WW),  one can easily compute that the quadratic terms are null; as for the cubic terms, their resonance structure is more subtle, but they can be estimated by the same method. However the integration by parts procedure, in addition to introducing multipliers with non smooth symbols,  introduces new singularities due to the non smoothness of the dispersion relation which is given by $|\xi|^{\frac12}$.  After dealing with these difficulties,  the problem is reduced to 
studying the asymptotic behavior of  quartic and higher order terms.  But for quartic terms the dispersion is strong enough to overcome any effect of resonance, thus we are able to prove global existence.

\subsection*{Plan of the article}
In order to prove the existence result, we will prove the a priori estimate $\|u\|_X < \infty$.
The organization of the article is as follows:
\smallskip

\emph{Section~\ref{hn} } contains  energy estimates (control of the $H^N$ part of the $X$ norm), using the formalism developed in~\cite{sz06}.
Most of the rest of the paper is dedicated to controlling the $L^2(x^2 dx)$ and $L^\infty$ parts of the $X$ norm. 
\emph{Section~\ref{decay}}  is the first step in this direction: the space-time resonant structure of the quadratic and cubic terms in the nonlinearity of $(WW)$ is studied.  In \emph{Section~\ref{normalform}}, a normal form transform is performed. This leads to distinguishing four kinds of terms: quadratic, cubic weakly resonant, cubic strongly resonant, and quartic and higher-order, or remainder terms.   Quadratic terms are treated in \emph{Section~\ref{quadratic}},   weakly resonant cubic terms are treated in \emph{Section~\ref{sectionweak}}, strongly resonant cubic terms are treated in \emph{Section~\ref{sectionstrong}}, and 
quartic and higher-order terms are treated in \emph{Section~\ref{sectionremainder}}. Finally, scattering (Corollary~\ref{scat}) is proved in \emph{Section~\ref{sectionscat}}.

The appendices contain more technical developments needed in the main body.
Some classical harmonic analysis results are recalled in \emph{appendix~\ref{appendixlin}} and results on standard pseudo-product operators are given in  \emph{appendix~\ref{appendixpp}}. Particular classes of bilinear and  trilinear  pseudo-product operators are defined and studied in \emph{appendices~\ref{appendquad}}  and  \emph{\ref{appendcub}}   respectively.   Estimates on the Dirichlet-Neumann operator are proved in \emph{appendix~\ref{ap:e}}, and 
finally  \emph{appendix~\ref{ap:f}} gives bounds on the quartic and higher order terms in the nonlinearity of (WW).

\subsection*{Notation} Throughout this manuscript we will use the following notations.
\medskip
\\ $A \lesssim B$ or $B \gtrsim A$ means that that $A \leq C_0 B$ for a constant $C_0$.
\\ $A \sim B$ means that $A \lesssim B$ and $B \lesssim A$.
\\ $A << B$ or $B>>A$ means that $A \leq \varepsilon_0 B$ for a small enough constant $\varepsilon_0$.
\medskip

\noindent We denote by $c$  (respectively $C$) small enough (respectively big enough) constants whose values may change from one line to the other.
\medskip
We  use   standard function spaces; their definitions are (see appendix~\ref{appendixlin} for the definition of the Littlewood-Paley operators $P_j$)
\medskip
\\Homogeneous Besov spaces $\dot{B}^s_{p,q}$: $\displaystyle \|f\|_{\dot{B}^s_{p,q}} \overset{def}{=} \left[ \sum_j \left( 2^{js} \|P_j f\|_p \right)^q \right]^{1/q}$.
\\ Homogeneous H\"older spaces $\dot{C}^\alpha$, with $0<\alpha<1$: equivalent to $\dot{B}^\alpha_{\infty,\infty}$.
\\Inhomogeneous Besov spaces $B^s_{p,q}$: $\displaystyle \|f\|_{B^s_{p,q}} \overset{def}{=} \left\| P_{<0} f \right\|_p + \left[ \sum_{j\geq 0} \left( 2^{js} \|P_j f\|_p \right)^q \right]^{1/q}$.
\\Inhomogeneous Sobolev spaces $W^{k,p}$: $\displaystyle \|f\|_{W^{k,p}} \overset{def}{=} \|f\|_{L^p} + \|\nabla^k f\|_{L^p}$.\\
\\Inhomogeneous Sobolev spaces $H^N$: equivalent to $W^{N,2}$ or $B^N_{2,2}$.\\
\medskip

\noindent We also make use of the standard embedding relation between these spaces, and first of all of the Sobolev embedding theorem: if $1 \leq p < q \leq \infty$ and $k > \frac{2}{q} - \frac{2}{p}$, then 
$
\|f\|_p \lesssim \|f\|_{W^{k,q}} .
$
\medskip

\noindent  The energy estimate will be reproduced along the lines of  \cite{sz06},  thus in deriving  the energy estimates we will adhere to the same notation as in \cite{sz06}.  
\medskip
 \\  $A^*$: the adjoint operator of an operator;
 $A_1 \cdot A_2 =\text{trace} (A_1 (A_2)^*)$, for two operators.
 \\  $D$ and $\p$: differentiation with respect to spatial variables.
 \\  $\nabla_X$:  the directional directive in the direction $X$. \\  $\D_t = \p_t + v^i\p_{x^i}$ the material derivative  along the particle path.
 \\$S = \{(x,h(x))\in\RR^3; \quad  h:\RR^2\to \RR\}$ a surface in $\RR^3$ given as the graph of $h$.
 \\$\Omega = \{(x,z)\in\RR^3; \quad z\le h(x)\}$ the region in $\RR^3$ bounded by $S$.
 \\  $N$: the outward unit normal vector to $S =\p \Omega$.
 \\  $\perp$ and $\top$: the normal and the tangential components to $S$ of the relevant quantities.
 \\$\CD_w = \nabla_w^\top$,  for any $w$ tangent $S$;  $\CD$ the covariant differentiation on $S \subset \RR^3$.
 \\  $\Pi$: the second fundamental form of $S$, $\Pi(w) = \nabla_w N$; $\Pi(X, Y)=\Pi(X) \cdot Y$.
\\  $\kappa$: the mean curvature of $S$, i.e. $\kappa = \text{trace}\,\Pi$.
\\$f_\CH$: the harmonic extension of $f$ defined on $S$ into   $\Omega$.
\\$\CN (f) = \nabla_N f_\CH : S \to \RR$:  the Dirichlet-Neumann operator.
 \\  $\Delta_S= \text{trace} \CD^2$: the Beltrami-Lapalace operator on $S$.
 \\ $(- \Delta)^{-1}$  is the inverse Laplacian on $\Omega$ with zero Dirichlet data.
\medskip

\noindent The decay and weighted estimates will be carried out using harmonic analysis
 techniques,  where we employ standard notations and denote by:
 \medskip
 \\$\widehat{f}$ or $\mathcal{F} f$: the Fourier transform of a function $f$.
\\ $m(D) f = \mathcal{F}^{-1} m(\xi) (\mathcal{F} f)(\xi)$: the Fourier multiplier  with  symbol   $m$.  In particular  $\Lambda = |D|$.
 \\ $\mathcal{A}(x_1,\dots ,x_n)$: a smooth scalar or vector-valued function in all its arguments $(x_1,\dots, x_n)$.
\\ $\mathcal{A}(x_1,\dots ,x_n)[y]$: a function which is linear in $y$ and smooth in $(x_1,\dots,x_n)$
\\$\mathcal{A}(x_1,\dots \,x_n)[y,z]$: a function which is bilinear in $(y,z)$ and smooth in $(x_1,\dots,x_n)$.

\section{Local existence and Energy Estimates}\label{hn}
There are a variety of methods that have been developed to prove local  well posedness and to obtain energy estimates  for (E-BC) in Sobolev spaces.   For example in \cite{sz09} local well posedness was established in $H^m$ for $m> 5/2$.  The success of all of these methods hinges upon finding the appropriate
cancellations needed to isolate the highest order derivative term with the appropriate
Rayleigh-Taylor sign condition. Here we follow the method developed in \cite{sz06} to obtain higher energy estimates.  The idea is the following: since the  system  is reduced to  the boundary   we apply the surface Laplacian $\Delta_S$ to (BC) instead of partial derivatives.  This leads to an evolution equation for the mean curvature $\kappa$.

\subsection*{Conserved energy}
We start  by deriving the conserved energy of the (E--BC) system.  Observe that the Euler system in the presence of gravity can be written as 

\[
\p_t v + v \cdot \nabla v
=- \nabla p - ge_3 \overset{\text{\tiny def}}{=} - \nabla q
\]
where $q = p_{vv} + gh_ \mathcal{H}$, 
\begin{alignat*}{2}
p_{v v}\in \dot{H} ^1 (\Omega): - \De p_{vv} &= \operatorname{trace} [(Dv)^2] & \qquad & p_{v v} \big| _S = 0\\
h_ \mathcal{H} \in \dot{H} ^1 (\Omega): - \De h_\CH & = 0     &&h_ \mathcal{H}\big|_S = h
\end{alignat*}
This splitting of $q$ is natural since $p_{v v} $
is the Lagrange multiplier of the volume preserving  condition
$(\operatorname{div} v =0)$,
and $g\nabla  h_ \mathcal{H}$ is the variational derivative of the potential energy
$\mathscr{V} =\!\!\! \intl_{\Omega\cap \{z>0\}} \!\!-\!\! \intl_{\Omega^c \cap \{z <0\}}\!\!\!\!\!gz\, dxdz $.

\begin{proposition}\label{p1}
The conservation of energy for {\rm (E--BC)}  system  is
\[
E  = \int_\Omega \frac 12 |v|^2dx dz  + \mathscr{V}
= \int_S \frac 12 \psi \CN \psi
+ \frac 12 g h^2(1+|\nabla h|^2|)^{-\frac 12} \; dS
= \text{constant}.
\]
\end{proposition}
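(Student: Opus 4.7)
The plan is to establish the proposition in two parts: first, show that the bulk and boundary expressions for $E$ coincide; then show that $E$ is time-invariant.

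For the identification of the two expressions: for the kinetic part, since $v=\nabla\psi_\CH$ with $\psi_\CH$ harmonic and $\nabla\cdot v=0$ in $\Omega$, the identity $\nabla\cdot(\psi_\CH v)=|\nabla\psi_\CH|^2=|v|^2$ together with the divergence theorem yields
\[
\intl_\Omega |v|^2\,dx\,dz = \intl_S \psi_\CH (v\cdot N)\,dS = \intl_S \psi\,\CN\psi\,dS,
\]
using $\psi_\CH|_S=\psi$ and $\CN\psi = \nabla_N\psi_\CH = v\cdot N$ on $S$; decay of $v$ as $z\to -\infty$ kills any contribution at the bottom of $\Omega$. For the potential part, computing $\mathscr{V}$ column by column gives $\int_0^{h(x)}gz\,dz=\tfrac12 gh(x)^2$ when $h(x)>0$, and symmetrically $\tfrac12 gh(x)^2$ when $h(x)<0$, so $\mathscr{V}=\intl_{\RR^2}\tfrac12 gh^2\,dx$; substituting $dx=(1+|\nabla h|^2)^{-1/2}\,dS$ yields the surface density in the proposition.

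For the time invariance, I would differentiate $E$ directly. By Reynolds' transport theorem applied to $\Omega(t)$ — whose boundary moves with normal velocity $v\cdot N$ (the kinematic condition $\p_t h + v\cdot\nabla(h-z)=0$ rewritten via the formula $N=(-\nabla h,1)/(1+|\nabla h|^2)^{1/2}$) — one gets
\[
\tfrac{d}{dt}\intl_\Omega \tfrac12|v|^2\,dx\,dz = \intl_\Omega v\cdot\p_t v\,dx\,dz + \intl_S \tfrac12|v|^2(v\cdot N)\,dS.
\]
Substituting the Euler equation in the form $\p_t v + (v\cdot\nabla)v = -\nabla(p+gz)$, then integrating the convective term by parts against $v$ using $\nabla\cdot v=0$ and the pressure-plus-gravity term by parts using $p|_S=0$ and $z|_S=h$, the bulk integral collapses to $-\tfrac12\intl_S|v|^2(v\cdot N)\,dS - g\intl_S h(v\cdot N)\,dS$. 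Consequently
\[
\tfrac{d}{dt}\intl_\Omega \tfrac12|v|^2\,dx\,dz = -g\intl_S h(v\cdot N)\,dS.
\]
On the other hand, $\mathscr{V}=\intl_{\RR^2}\tfrac12 gh^2\,dx$ and the kinematic condition gives $\p_t h = (1+|\nabla h|^2)^{1/2}(v\cdot N)$, so $d\mathscr{V}/dt = g\intl_S h(v\cdot N)\,dS$, and the two contributions cancel.

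The argument is essentially structural bookkeeping: the pressure boundary condition kills the pressure work on $S$, the kinematic condition forces the domain's boundary velocity to coincide with the fluid's normal velocity, and the gravity term in the momentum equation pairs exactly with the time variation of $\mathscr{V}$. The main (mild) analytic point to verify is sufficient decay of $v$, $\psi_\CH$ and their derivatives as $z\to-\infty$ so that all integrations by parts on the unbounded domain $\Omega$ are justified; for solutions of the Sobolev regularity used throughout the paper this is standard.
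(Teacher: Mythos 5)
Your proof is correct and follows essentially the same route as the paper: differentiate $E$ in time using the transport theorem for the moving domain, substitute Euler's equation with the combined pressure--gravity potential (your $p+gz$ is exactly the paper's $q=p_{vv}+gh_\CH$), integrate by parts, and cancel the resulting boundary term against $d\mathscr{V}/dt$ obtained from the kinematic condition. The only addition is that you also verify the identification of the bulk and surface expressions for $E$ (divergence theorem for the kinetic part, column integration for $\mathscr{V}$), which the paper states but leaves implicit.
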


\proof
The fact that $E$ is conserved is a consequence
of the Lagrangian formulation of the Euler equations.
Here we will present a direct proof.  Differentiating with respect to $t$
\[ \begin{split}
\frac{dE}{dt} & = \int_\Omega
 v \cdot \frac{\p v}{\p t} \, dx dz 
+ \int_S \frac 1 2
|v|^2 v \cdot N \,dS
+ g \int_S h v \cdot N \, dS\\
& = - \int_\Omega v \cdot \nabla \frac{|v|^2}2
+ v \cdot \nabla q\,dxdz 
+ \int_S \frac 12 |v|^2 v \cdot N \, dS
 + g \int_S h v \cdot N \, dS\\
& =  - \int_S q v \cdot N \, dS
+  g \int_S h v \cdot N \, dS = 0 .
\end{split}\]
\endproof

Note that the conservation of energy implies bounds on $\CN^\frac12\psi$ and on $h$ in $L^2(S)$. Thus to derive higher energy estimates on $\psi$ and $h$ we need to  relate the Dirichlet to Neumann operator $\CN$  to differentiation on $S$. This is done by standard   estimate on harmonic functions on $\Omega$ which imply that as far as estimates are concerned,  $\CN$ behaves like $\sqrt{-\Delta_S}$ plus lower order terms. These estimates were carried out in \cite{sz06} without detailing explicitly how the constants in the inequalities depend on the various norms of the solution.  Here we will detail how the constants in the energy inequalities depend on several derivatives of the solution in the $L^\infty$ norm.  \emph{ For the remainder of this section we assume that $S$ is given by the graph of a smooth function $h$   and that   $\|h\|_{W^{5, p} (\RR^2)} \le \varepsilon_0$ for some $2<p<4$, and $\varepsilon_0$ sufficiently small. Thus  for any $\ell\ge 0$ we do not need to distinguish between $H^\ell(S)$ and $H^\ell(\RR^2)$, i.e. for any $\varphi\in H^\ell(S)$ we have}
$$
\|\varphi\|_{H^{\ell} (S) } + \|h\|_{H^{\ell }(S)}  \sim \|\varphi\|_{H^{\ell} (\RR^2) } + \|h\|_{H^{\ell }(\RR^2)}.
$$
\begin{proposition}\label{prop:elliptic}
For any $\varphi$ defined on $S$ 
\begin{align}\label{eq:1}
&\|\CN \varphi\|_{W^{2,\infty}(\RR^2)} \lesssim \|D \varphi\|_{W^{3,\infty}} +   \|\Lambda^\frac12\varphi\|_{L^\infty}
\end{align}
\end{proposition}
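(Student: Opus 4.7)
The plan is to split the operator as $\CN\varphi=\Lambda\varphi+(\CN-\Lambda)\varphi$: the first piece is the Dirichlet--Neumann operator for the flat configuration $h\equiv 0$ and is estimated by pure harmonic analysis, while the second is a perturbation, controlled by the smallness of $h$ via elliptic regularity for the harmonic extension $\varphi_\CH$.

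For the principal term $\Lambda\varphi$, I would use a Littlewood--Paley decomposition $\varphi=P_{<0}\varphi+P_{\geq 0}\varphi$. On the low-frequency part, factor $\Lambda P_{<0}=(\Lambda^{1/2}P_{<0})\circ\Lambda^{1/2}$: the Fourier multiplier $|\xi|^{1/2}\chi_{<0}(\xi)$ is continuous, compactly supported, and the only singularity (at the origin) is H\"older of class $1/2$, mild enough that its inverse Fourier transform is integrable on $\RR^2$. This yields $\|\Lambda P_{<0}\varphi\|_{W^{2,\infty}}\lesssim\|\Lambda^{1/2}\varphi\|_{L^\infty}$. On the high-frequency part, each dyadic block factors as $\Lambda P_j=\sum_i R_i P_j\p_i$, where $R_i=\p_i\Lambda^{-1}$ is a Riesz transform; $R_iP_j$ has a Schwartz kernel with $L^1$ norm uniform in $j$, so $\|\Lambda P_j\varphi\|_{L^\infty}\lesssim\|P_j D\varphi\|_{L^\infty}$. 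Summing these blocks in $W^{2,\infty}$ costs one Bernstein derivative at the $L^\infty$ endpoint (using $\|P_j\varphi\|_\infty\lesssim 2^{-jM}\|\varphi\|_{W^{M,\infty}}$), producing $\|\Lambda P_{\geq 0}\varphi\|_{W^{2,\infty}}\lesssim\|D\varphi\|_{W^{3,\infty}}$.

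To treat $(\CN-\Lambda)\varphi$, I would flatten $\Omega$ to the half-space $\{\zeta\leq 0\}$ via $(x,z)\mapsto(x,z-h(x))$. The harmonic extension becomes $\widetilde\varphi(x,\zeta)=\varphi_\CH(x,\zeta+h(x))$, satisfying an elliptic equation on the half-space whose principal part differs from $-\Delta_{x,\zeta}$ by lower-order terms proportional to $\nabla h$, $\Delta h$, and their products, each small by the assumption $\|h\|_{W^{5,p}}\leq\varepsilon_0$. Standard elliptic regularity up to the flat boundary, carried out in Besov spaces to handle the $L^\infty$ endpoint and reduced to the flat Dirichlet--Neumann estimate for the Poisson kernel, then yields
\[
\|(\CN-\Lambda)\varphi\|_{W^{2,\infty}}\lesssim\|h\|_{W^{5,p}}\bigl(\|D\varphi\|_{W^{3,\infty}}+\|\Lambda^{1/2}\varphi\|_{L^\infty}\bigr),
\]
and smallness of $h$ absorbs this perturbation into the right-hand side.

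The main obstacle is the $L^\infty$ endpoint: neither $\Lambda$ nor the Riesz transforms are bounded on $L^\infty$, so one must work frequency-by-frequency and accept a one-derivative Bernstein loss when summing, which is precisely why $W^{3,\infty}$ rather than $W^{2,\infty}$ appears on the right. The separate term $\|\Lambda^{1/2}\varphi\|_{L^\infty}$ is also indispensable, since the inequality $|\xi|\lesssim|\xi|^{1/2}$ only holds on the bounded frequency range isolated by $P_{<0}$, and it is on precisely that range that the dispersion order of $\CN$ drops below the order of $D$.
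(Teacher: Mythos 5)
Your strategy is genuinely different from the paper's. The paper never splits off $\Lambda$ or flattens the domain: it represents $\varphi_\CH$ by a double layer potential, solves the integral equation $\tfrac12\mu+\int K(\cdot,y)\mu(y)\,dy=\varphi$ for the density by a Neumann series (this is exactly where the hypothesis $\|h\|_{W^{2,\infty}}+\|\nabla h\|_{L^p}\lesssim\varepsilon_0$, $p<4$, is used, to make the kernel integrable with the weight $1+|x-y|^{1/2}$), obtains $L^\infty$ and $\dot{C}^{\alpha}$ bounds on $\mu$ and its derivatives, and then estimates $\CN\varphi$ directly from the kernel, taming the hypersingular part through the identity $0=\Delta G=\Delta_S G+\kappa\,\nabla_N G+D^2G(N,N)$ and a tangential integration by parts. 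Your route (flat multiplier analysis for $\Lambda$, then a perturbative elliptic argument for $\CN-\Lambda$ after flattening) is closer in spirit to the Craig--Nicholls analyticity of the Dirichlet--Neumann map that the paper itself alludes to; if carried out it would buy a cleaner separation between the model operator and the $h$-dependence, at the price of boundary Schauder theory on the half space, whereas the paper's layer-potential argument gets the estimate in one pass from explicit kernel bounds. Your first half (the bound for $\Lambda\varphi$, the Bernstein loss explaining $W^{3,\infty}$, and the role of $\|\Lambda^{1/2}\varphi\|_{L^\infty}$ at low frequencies) is correct, with one quibble: H\"older-$\tfrac12$ continuity of the truncated symbol is not by itself a valid reason for an $L^1$ kernel; sum the dyadic pieces $\mathcal{F}^{-1}\bigl[|\xi|^{1/2}\theta(\xi/2^j)\bigr]$, whose $L^1$ norms are $\sim 2^{j/2}$ for $j<0$, instead.

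The gap is in the second half: the displayed bound for $(\CN-\Lambda)\varphi$ is essentially the whole content of the proposition, and ``standard elliptic regularity up to the flat boundary'' does not deliver it as stated, because $W^{2,\infty}$ is an endpoint where Schauder/Calder\'on--Zygmund theory fails. Concretely, already the first correction $-\Lambda(h\Lambda\varphi)$ in the expansion of $G(h)\varphi$ cannot be placed in $W^{2,\infty}$ by $L^\infty$-boundedness of singular integrals: with the available regularity ($\Lambda\varphi$ lies in $B^{3}_{\infty,\infty}$ but not better), a naive estimate loses a logarithm, and the same endpoint problem recurs when you take the normal-derivative trace of the Dirichlet corrector in your fixed-point scheme. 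This is repairable only because the right-hand side overshoots by a full derivative and $h\in W^{5,p}\subset C^{4,\alpha}$: one must run the whole perturbative argument in non-integer H\"older/Zygmund norms (targeting, say, $C^{2,\alpha}\hookrightarrow W^{2,\infty}$) or exploit commutator structure such as $\Lambda(h\Lambda\varphi)=-h\Delta\varphi+[\Lambda,h]\Lambda\varphi$; your proposal gestures at Besov spaces but does not say how the slack is spent. A second point that needs an argument rather than an assertion: $\varphi$ itself is not controlled in $L^\infty$ (only $\Lambda^{1/2}\varphi$ and $D\varphi$ are), so the elliptic scheme for the flattened extension must be set up for $\nabla\widetilde{\varphi}$, i.e.\ modulo constants, and you must track how the low-frequency part of the perturbation is paid for by $\|\Lambda^{1/2}\varphi\|_{L^\infty}$; in the paper this is precisely the role of the chain $\|\mu\|_{\dot{C}^{1/2}}\lesssim\|\varphi\|_{\dot{C}^{1/2}}\lesssim\|\Lambda^{1/2}\varphi\|_{L^\infty}$. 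With those two points supplied, your route would give an acceptable alternative proof; as written, the decisive estimate is assumed rather than proved.
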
 
The proof of this proposition follows from standard arguments applied to double layer potential and will be given in  appendix \ref{ap:e}.   Note that inequality \eqref{eq:1} can be improved by using $C^\alpha$ norms, however \eqref{eq:1} is sufficient for our purpose.  One should compare the above proposition to the detailed behavior of $\CN$ as an analytic function of $h$  obtained in \cite{cn00}.

 Based on this proposition and the work done in \cite{sz06}  we make the following observations :
\medskip

\noindent $\bullet$ Since  $\De_S h = (1+ |\nabla h|^2)^{-2} [(1 + |\p_2h|^2)\p_1^2 h  -2 \p_1h\p_2h\p_{12}^2h + (1 + |\p_1h|^2)\p_2^2 h ]$  and the mean curvature is given by $\kappa = -(1+|\nabla h|^2)^{\frac 12} \De_S h$,  then  for $k \ge \ell \ge 2$ 
\[
\|h\|^2_{H^\ell(\RR^2)} \sim \int_S  h^2 + h(-\De_S^\ell) h\; dS \sim \int_S  h^2 + \kappa(-\De_S)^{\ell-2} \kappa\; dS  
\]

\noindent $\bullet$  By defining 
$$
\|\varphi\|_{H^{\ell + 1/2}(S)} = \|\varphi\|_{H^{\ell}(\RR^2)} + \|\Lambda^{\frac 12}\CD^\ell\varphi\|_{L^2(\RR^2)}
$$
then from the single layer potential formula \eqref{eqrho} it is easy to see that 
$$ \|\varphi\|_{L^{2}(S)}  + \|\Lambda\varphi\|_{L^2(\RR^2)}\sim \|\varphi\|_{L^{2}(\RR^2)} + \|\CN\varphi\|_{L^2(\RR^2)}
$$
and by interpolation
$$
\|\varphi\|_{H^{\ell + 1/2}(S)}  \sim \|\varphi\|_{H^{\ell}(S)} + \|\CN^{\frac 12}\CD^\ell\varphi\|_{L^2(\RR^2)}
$$

%


\noindent $\bullet$ For any function $g$ defined on $\Omega$,   and  for any function $\varphi$ defined on $S$
\begin{align}
&D^2 \varphi_\CH (N, N) = \CN^2 (\varphi) - 2\nabla_N (- \Delta)^{-1} (D N_\CH \cdot D^2\varphi_\CH) -
\CN(N) \cdot (\CN(\varphi) N + \nabla^\top \varphi) \\
&\Delta g= \Delta_S g + \kappa \nabla_N g + D^2 g (N, N) \quad \mathrm{on}\quad   S\Longrightarrow\\
&(-\Delta_S - \CN^2) \varphi = \kappa \CN(\varphi ) - 2\nabla_N (-
\Delta)^{-1} (D N_\CH \cdot D^2 \varphi_\CH) - \CN(N) \cdot (\CN(\varphi ) N +
\nabla^\top \varphi ) \label{eq:N2}
\end{align}
Consequently from equation \eqref{eq:N2}  $\CN^2$ is like $-\Delta_S$ plus lower order terms (involving derivatives of $h$),
 since after integrating by parts twice we have

\begin{equation*}
\int_{S} \varphi (-\Delta_{S}\varphi) - | \CN\varphi|^2 \; dS = \int_{S} \kappa \varphi \CN(\varphi) + \varphi \CN(N) \cdot (\CN(\varphi) N + \nabla^\top \varphi)
\; dS  - 2 \int_\Omega D N_\CH (\nabla \varphi_\CH) \cdot \nabla \varphi_\CH \; dx.
\end{equation*}
Therefore  
 for $ \ell\ge 2$
\[
\|\varphi\|^2_{H^\ell(S)} +  \|h\|^2_{H^\ell} \sim \int_{S}| \varphi |^2 +  | \CN^\ell\varphi|^2 \; dS + \|h\|^2_{H^\ell}.
\]

\noindent $\bullet$ Since the Euler  flow is assumed to be irrotational, then from \cite{sz06} (equation (3.6))
\begin{equation}\label{dtk}
\D_t \kappa = 
- (\Delta_{S}  \nabla \psi_\CH )\cdot N -
2 \Pi \cdot ((D^\top|_{S})  \nabla \psi_\CH),
%
\end{equation}
and from  proposition 4.3 of \cite{sz06},  we have
\begin{proposition}
$
\|\nabla \psi_\CH \|_{H^{\ell-1/2} (S) } \lesssim \| \D_t \kappa \|_{H^{\ell-5/2} (S)}   +\|h\|_{H^{\ell-1/2}} + \sqrt{E},
$ for $\ell \ge 3$.
\end{proposition}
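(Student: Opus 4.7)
The plan is to use equation \eqref{dtk} to view $\D_t\kappa$ as an elliptic operator (essentially $-\Delta_S$) applied to $\CN\psi$, modulo lower-order terms, and then to invert.

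First I would expand $(\Delta_S \nabla \psi_\CH)\cdot N$ by commuting the surface Laplacian past the contraction with $N$. Since $\CD N = \Pi$ and $\Delta_S N$ is a first-order expression in $\Pi$, one finds
\[
(\Delta_S \nabla\psi_\CH)\cdot N = \Delta_S(\nabla\psi_\CH\cdot N) - 2\,\CD(\nabla\psi_\CH)\cdot \Pi - \nabla\psi_\CH\cdot \Delta_S N,
\]
so that after substitution into \eqref{dtk}
\[
-\Delta_S(\CN\psi) = \D_t\kappa + Q(h,\nabla\psi_\CH),
\]
where $Q$ is a bilinear expression involving $\Pi$, $\CD\Pi$, and at most one surface derivative of $\nabla\psi_\CH$. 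Because $\|h\|_{W^{5,p}}\le\varepsilon_0$ is small, all factors coming from $\Pi$ are $O(\varepsilon_0)$ in any $W^{k,\infty}$ norm with $k\le 3$, so $Q$ will be treatable as a perturbation.

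Next I would invert $-\Delta_S$. Under the small-$h$ assumption, $\Delta_S$ is a small perturbation of the flat Laplacian on $\RR^2$, so standard elliptic regularity (plus a paraproduct estimate to handle the variable coefficients) yields
\[
\|\CN\psi\|_{H^{\ell-1/2}(S)} \lesssim \|\D_t\kappa\|_{H^{\ell-5/2}(S)} + \|Q(h,\nabla\psi_\CH)\|_{H^{\ell-5/2}(S)} + \|\CN\psi\|_{L^2(S)}.
\]
The last term is controlled by $\sqrt{E}$ via Proposition~\ref{p1}. For the $Q$ term I would bound it in $H^{\ell-5/2}$ by a product of $\|h\|_{H^{\ell-1/2}}$ and $\|\nabla\psi_\CH\|_{H^{\ell-3/2}(S)}$ using the Sobolev tame product inequalities (with one factor of small $L^\infty$ norm from $h$), leaving a term that contains one fewer derivative on $\nabla\psi_\CH$ than the left-hand side; this allows a standard induction or interpolation argument to close.

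Having controlled $\CN\psi$, I would pass from $\CN\psi$ to the full trace $\nabla\psi_\CH|_S$ by using the third bullet in the discussion preceding the proposition together with the equivalence $\|\varphi\|_{H^{\ell+1/2}(S)}\sim \|\varphi\|_{H^\ell(S)}+\|\CN^{1/2}\CD^\ell\varphi\|_{L^2}$ from the second bullet. Since $\psi_\CH$ is harmonic in $\Omega$, the Dirichlet-to-Neumann correspondence gives, in the small-$h$ regime,
\[
\|\nabla\psi_\CH\|_{H^{\ell-1/2}(S)} \lesssim \|\CN\psi\|_{H^{\ell-1/2}(S)} + \|h\|_{H^{\ell-1/2}} + \sqrt{E},
\]
where the $\|h\|_{H^{\ell-1/2}}$ term arises from the curvature of $S$ appearing in any harmonic extension estimate, and $\sqrt{E}$ comes from the conservation of the $L^2(S)$ norm of $\CN^{1/2}\psi$. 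Combining the two inequalities gives the claim.

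The main obstacle is the bookkeeping of the lower-order terms: each commutator in step one contributes a term mixing derivatives of $h$ and of $\nabla\psi_\CH$, and one must carefully distribute derivatives so that at most $\|h\|_{H^{\ell-1/2}}$ appears (and not, say, $\|h\|_{H^{\ell+1/2}}$), while ensuring that every derivative in excess of $\ell-3/2$ falling on $\nabla\psi_\CH$ is compensated by a small $L^\infty$ factor from $h$ so that it can be absorbed back into the left-hand side. The smallness hypothesis on $\|h\|_{W^{5,p}}$ is precisely what makes this absorption possible.
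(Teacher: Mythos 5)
There is a genuine gap, and it is exactly at the point you flag as ``bookkeeping''. When you commute $\Delta_S$ past the contraction with $N$, the term $\nabla\psi_\CH\cdot\Delta_S N$ is not a harmless perturbation: by the Simons-type identity $\Delta_S N=-|\Pi|^2N-\nabla^\top\kappa$, its top-order part is $(\nabla\psi_\CH)^\top\cdot\nabla^\top\kappa$, i.e.\ \emph{three} derivatives of $h$ against the undifferentiated velocity (this is precisely the tangential transport term distinguishing $\D_t\kappa$ from the normal-velocity variation of $\kappa$). To run your elliptic inversion you must place this term in $H^{\ell-5/2}(S)$, and any tame product estimate gives at best $\|\nabla\psi_\CH\|_{L^\infty}\|\kappa\|_{H^{\ell-3/2}}$, i.e.\ $\|h\|_{H^{\ell+1/2}}$ — one full derivative more on $h$ than the right-hand side of the proposition allows. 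Rewriting in divergence form ($\CD\cdot(\kappa(\nabla\psi_\CH)^\top)-\kappa\,\CD\cdot(\nabla\psi_\CH)^\top$) does not help, since the divergence-form piece still requires $\kappa\in H^{\ell-3/2}$; nor can a paraproduct shift the excess derivative onto $\nabla\psi_\CH$, because it sits on the rough (high-frequency) factor $\kappa$ and multiplication cannot raise that factor's regularity. The smallness of $\|h\|_{W^{5,p}}$ is irrelevant here: it only buys absorption when the excess derivatives fall on $\nabla\psi_\CH$, not on $h$. The loss is not cosmetic — in the application (Proposition \ref{prop:2} and the energy estimate) one only controls $\kappa$ in $H^{\ell-2}$, so an estimate with $\|h\|_{H^{\ell+1/2}}$ on the right cannot be used. (A smaller imprecision: the energy only gives $\CN^{1/2}\psi\in L^2$, not $\|\CN\psi\|_{L^2}\lesssim\sqrt{E}$; that one is fixable by interpolation and absorption.)

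This is why the paper argues differently: it takes as unknown $\nu=N\cdot\nabla\nabla\psi_\CH$ (one more derivative on the velocity), computes the tangential divergence and curl of $\nu^\top$ — where \eqref{dtk} makes $\CD\cdot\nu^\top=-\D_t\kappa$ up to terms quadratic in $(D^{\le 2}h,D^{\le 2}\psi)$ — and then recovers $\nu^\perp$ through $\CN\nu^\perp=-\CD\cdot\nu^\top+O((\sum_{j=1,2}|D^j\psi|+|D^jh|)^2)$. In that first-order div–curl formulation the curvature only ever enters as $\Pi$ (two derivatives of $h$) contracted against second derivatives of $\psi_\CH$, so the errors in $H^{\ell-5/2}$ cost at most $\|h\|_{H^{\ell-1/2}}$ and $\|D\psi\|_{H^{\ell-3/2}}$, the latter being one derivative below the left-hand side and hence absorbable. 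If you want to salvage a second-order/elliptic-inversion proof along your lines, you must first find a way to avoid ever producing $\nabla^\top\kappa$ against a low-order factor; as written, your reduction does produce it and the estimate cannot close.
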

\proof[Sketch of the proof]  Since the above statement is contained in the afore cited  proposition we will only present the idea of the proof:   on $S$,  split $\nu = N\cdot \nabla \nabla \psi_\CH$   into tangential and normal components,  compute  the surface divergence and curl of these quantities, and use \eqref{dtk}  
\[
- (\Delta_{S}\,  \nabla\psi_\CH )\cdot N=  \D_t \kappa +
2 \Pi \cdot ((D^\top  \nabla\psi_\CH)
\]
to obtain the stated estimate.

Let $\nu^\top$ and $\nu^\perp$ denote the tangential   and normal components of $\nu$.  The tangential divergence of $\nu^\top$ is given by\footnote{Here we introduced the notation $A=O(B)$ to mean $\|A\|_{H^s}\lesssim \|B\|_{H^s}$. Thus $A=O(\sum_{j=1,2} |D^j\psi|+ |D^jh|)^2)$ imply  $\|A\|_{H^s}\lesssim( \|D\psi\|_{W^{1,\infty}} +  \|Dh\|_{W^{1,\infty}})  ( \|D\psi\|_{H^s} +  \|Dh\|_{H^s})$.}
\begin{equation}\label{E:cddotv1}
\CD \cdot \nu^\top =  (\Delta_S\, \nabla\psi_\CH) \cdot N +
(D^\top \nabla\psi_\CH )\cdot \Pi  =  -\D_t \kappa  + O(\sum_{j=1,2} |D^j\psi|+ |D^jh|)^2). 
\end{equation}
and the tangential curl by
\begin{equation}
\omega_\nu^\top (X_1) \cdot X_2 =\Pi(X_1) \cdot(X_2\cdot \nabla\nabla\psi_\CH )- \Pi(X_2) \cdot (X_1\cdot \nabla\nabla\psi_\CH)=  O(\sum_{j=1,2} |D^j\psi| + |D^j h|)^2. 
\label{E:tcurl1}
\end{equation}
where   $\{X_1,   X_{2}\}$   an orthonormal
frame of $T S$.   Thus  
\begin{equation}\label{eq:123}
\| D \nu^\top\|_{H^{\ell-5/2}} \lesssim   \| \D_t \kappa \|_{H^{\ell-5/2} (S)} +   \| D \psi\|_{H^{\ell -3/2}}+\|D h\|_{H^{\ell -3/2}} +\sqrt{E}.
\end{equation}
%
To bound $\nu^\perp$ let $\tilde\nu = N_\CH\cdot \nabla \nabla \psi_\CH$ and note that
\[
\begin{split}
&\CN\nu^\perp= \nabla_N (\nabla_{N_\CH}\, (\nabla\psi_\CH) \cdot N_{\CH}) -\Delta^{-1}[\Delta
(\nabla_{N_\CH}\, (\nabla\psi_\CH) \cdot N_{\CH})]\\
&\CD\cdot \nu^\top =\nabla\cdot \tilde\nu - \kappa \nu^\perp - \nabla_N
(\nabla_{N_\CH}\, (\nabla\psi_\CH \cdot N_{\CH}) + N \cdot D (\nabla\psi_\CH)(\CN (N)).
\end{split}
\]
Since  $\nabla\cdot  \tilde \nu =  D(\nabla\psi_\CH)\cdot D N_\CH$,  we obtain 
$\CN\nu^\perp=  -  \CD\cdot \nu^\top  + O(\sum_{j=1,2} |D^j\psi|+ |D^jh|)^2)
$  which together with \eqref{eq:123} imply
\[
\| D  N\cdot \nabla \nabla \psi_\CH \|_{H^{\ell-5/2}(S)} = \| D \nu\|_{H^{\ell-5/2}(S)} \lesssim   \| \D_t \kappa \|_{H^{\ell-5/2} (S)} +   \| D \psi\|_{H^{\ell -3/2}(S)}+\|D h\|_{H^{\ell -3/2}(S)} + \sqrt{E}.
\]
Since $\CN$ is equivalent to one derivative in norm, we obtain the stated  bound.
\endproof

These observations imply 

\begin{proposition}\label{prop:2} Assume that $v=\nabla \psi_\CH$ and $h$ solve  the {\rm(E-BC)}  system,   then for $\ell \ge 3$
 \[
 \|h(t)\|_{H^\ell(S)} +\|v(t)\|_{H^{\ell- 1/2}(S)}
\sim \|\D_t \kappa(t) \|_{H^{\ell- 5/2}(S)}  +\|\kappa(t)\| _{H^{\ell -2}(S)} + \sqrt{E}.
\]

\end{proposition}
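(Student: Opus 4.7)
The plan is to prove the equivalence by combining the four bullet-point observations preceding the proposition with the evolution equation \eqref{dtk} for $\kappa$, treating the two directions separately and using the conservation of $E$ (Proposition \ref{p1}) to close the low-frequency part.

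For the direction $\|h\|_{H^\ell}+\|v\|_{H^{\ell-1/2}} \lesssim \|\D_t\kappa\|_{H^{\ell-5/2}}+\|\kappa\|_{H^{\ell-2}}+\sqrt{E}$, the first bullet gives $\|h\|_{H^\ell}^2 \sim \|h\|_{L^2}^2+\|\kappa\|_{H^{\ell-2}}^2$; combined with the bound $\|h\|_{L^2}\lesssim\sqrt{E}$ coming from the boundary form of the energy in Proposition~\ref{p1}, this yields $\|h\|_{H^\ell}\lesssim\|\kappa\|_{H^{\ell-2}}+\sqrt E$. The same inequality at the lower regularity $\ell-1/2$ (or by interpolation) controls $\|h\|_{H^{\ell-1/2}}$ by the same right-hand side, and then the proposition just proved in the excerpt gives $\|v\|_{H^{\ell-1/2}(S)}\lesssim\|\D_t\kappa\|_{H^{\ell-5/2}}+\|h\|_{H^{\ell-1/2}}+\sqrt E$, closing this direction.

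For the reverse direction, I would estimate each of the three terms on the right. The formula $\kappa=-(1+|\nabla h|^2)^{1/2}\Delta_S h$, with $\Delta_S$ having coefficients smooth in $\nabla h$, gives $\|\kappa\|_{H^{\ell-2}}\lesssim\|h\|_{H^\ell}$ using the smallness of $\|h\|_{W^{5,p}}$ and standard Moser-type product estimates. Equation \eqref{dtk} expresses $\D_t\kappa$ as a linear combination of a second-order surface differential operator applied to $\nabla\psi_\CH$ and of $\Pi$ contracted with $D^\top\nabla\psi_\CH$, so $\|\D_t\kappa\|_{H^{\ell-5/2}}\lesssim\|v\|_{H^{\ell-1/2}(S)}+(\text{products with }h\text{-derivatives})$; these last products are tame by the $W^{5,p}$ smallness of $h$ and the third observation relating $\CN$-norms of $v$ to the boundary Sobolev norm. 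Finally, from the boundary expression for $E$ in Proposition~\ref{p1} one gets $\sqrt E\lesssim\|\Lambda^{1/2}\psi\|_{L^2(S)}+\|h\|_{L^2(S)}$; using $v^\perp=\CN\psi$ together with the third observation, this is $\lesssim \|v\|_{L^2(S)}+\|h\|_{L^2(S)}\lesssim\|v\|_{H^{\ell-1/2}}+\|h\|_{H^\ell}$.

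The main obstacle throughout is bookkeeping the nonlinear dependence on $h$: the Beltrami-Laplace operator, the second fundamental form $\Pi$, the harmonic extension $\psi_\CH$, and the splitting of $\nu$ into $\nu^\top,\nu^\perp$ in \eqref{E:cddotv1}-\eqref{E:tcurl1} all introduce factors depending on $\nabla h$ and lower derivatives, and in the intermediate steps one must be careful that these factors contribute only terms controllable by $\sqrt E$ or by the other norms on the right-hand side. The smallness assumption $\|h\|_{W^{5,p}}\le\varepsilon_0$ together with the $L^\infty$ control on $\CN$ from Proposition~\ref{prop:elliptic} is precisely what makes these nonlinear terms harmless at the level of the stated equivalence, and, combined with the identifications of the four bullet points and the proposition on $\|\nabla\psi_\CH\|_{H^{\ell-1/2}}$, closes the proof.
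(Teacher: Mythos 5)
Your overall route coincides with the paper's: Proposition~\ref{prop:2} is presented there as a direct consequence of the preceding bullet-point observations together with the proposition bounding $\|\nabla \psi_\CH\|_{H^{\ell-1/2}(S)}$, and your forward direction (the first bullet to trade $\|h\|_{H^\ell}$ for $\|\kappa\|_{H^{\ell-2}}+\|h\|_{L^2}$, the conserved energy of Proposition~\ref{p1} to absorb $\|h\|_{L^2}$, then the cited proposition to control $\|v\|_{H^{\ell-1/2}(S)}$), as well as your reverse bounds $\|\kappa\|_{H^{\ell-2}}\lesssim \|h\|_{H^\ell}$ and, via \eqref{dtk}, $\|\D_t\kappa\|_{H^{\ell-5/2}}\lesssim \|v\|_{H^{\ell-1/2}(S)}+\cdots$ under the smallness of $\|h\|_{W^{5,p}}$, are exactly the intended argument.

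There is, however, a genuine flaw in the last step of your reverse direction, where you claim $\sqrt E\lesssim \|\Lambda^{1/2}\psi\|_{L^2(S)}+\|h\|_{L^2(S)}\lesssim \|v\|_{L^2(S)}+\|h\|_{L^2(S)}$. The second inequality fails. The kinetic part of $E$ is $\int_S \tfrac12 \psi\,\CN\psi\,dS\sim \|\Lambda^{1/2}\psi\|_{L^2}^2$, i.e.\ half a derivative of $\psi$, whereas $v=\nabla\psi_\CH|_S$ only carries a full derivative of $\psi$ (in the flat case $\|v\|_{L^2(S)}\sim\|\Lambda\psi\|_{L^2}$, and $v^\perp=\CN\psi$ gives nothing better). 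A half derivative is not dominated by a full one at low frequencies: for $\psi$ concentrated at frequency $\epsilon$ the ratio $\|\Lambda^{1/2}\psi\|_{L^2}/\|\Lambda\psi\|_{L^2}\sim \epsilon^{-1/2}\to\infty$, and the interpolation $\|\Lambda^{1/2}\psi\|_{L^2}\le\|\psi\|_{L^2}^{1/2}\|\Lambda\psi\|_{L^2}^{1/2}$ would require control of $\|\psi\|_{L^2}$, which neither $v$ nor $h$ provides. The intended reading is that $\sqrt E$ is the conserved quantity of Proposition~\ref{p1}, carried along on the right-hand side precisely because it is \emph{not} controlled by the derivative-level norms of $(h,v)$ on the left; the substantive content of the reverse inequality is the two curvature bounds you did prove, with $\sqrt E$ treated as a constant of the motion rather than something to be dominated by $\|v\|_{L^2(S)}+\|h\|_{L^2(S)}$. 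As written, that step of your proposal would fail.
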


\subsection*{Commutators estimates}  From \cite{sz06} we have for any function $f$ defined on $\Omega$ and $\varphi$ defined on  $S$, and where $S$ is moving by the normal component of the velocity
 $v=  \nabla \psi_\CH$  
\begin{align}
&\D_t \nabla f = \nabla \D_t f - (D^2\psi_\CH) (\nabla f), \label{eq:dtgrad}\\
&\D_t \varphi_\CH = (\D_t \varphi)_\CH + \Delta^{-1} (2D^2\psi_\CH \cdot D^2 \varphi_\CH)\\
&\D_t \Delta^{-1} f = \Delta^{-1} \D_t f + \Delta^{-1} ( 2D^2\psi_\CH \cdot
D^2 \Delta^{-1} f ).  \label{eq:dtlapinv}
\end{align}
These equations lead to the following commutators formulas 
%
\begin{align}\label{CM1}
&[\Delta_{S}, \D_t] f= 2 \CD^2 f
\cdot ((D^\top|_{T S}) \nabla \psi_\CH)  + \nabla^\top f \cdot \Delta_S  \nabla \psi_\CH  - \kappa \nabla_{\nabla^\top f}  \nabla \psi_\CH \cdot N\\
&[\D_t, \CN]f =\nabla_N \label{dtn}
\Delta^{-1} 2D^2 \psi_\CH \cdot D^2 f_\CH  -
2D^2\psi_\CH (\nabla f_\CH, N) +\nabla f_\CH \cdot N D^2\psi_\CH (N, N).
\end{align}
which together with \eqref{eq:N2} imply that for $s\ge 2$ these commutators are bounded operators on spaces given in  equations (4.23) and (A.14) of \cite {sz06}.
Amplifications  of these  bounds  are given in the  proposition below.  Recall that 
$u =  h + i \Lambda^{1/2} \psi$.

\begin{proposition} \label{prop:2.5}  Assume that $v=\nabla \psi_\CH$ and $h$ solve the 
{\rm(E-BC)} system.  Let $w(t)= (\Lambda^{\frac12} h(t), v(t))$ defined on $S$, then the following commutator  estimates hold
\begin{align}\label{CMLAP}
&\| [\Delta_{S}, \D_t] f(t)\|_{H^\ell(S)} \lesssim 
\|u(t)\|_{W^{3,\infty}}\|f(t)\|_{H^{\ell +2}(S)} +  \|f(t)\|_{W^{3,\infty}}\|w(t)\|_{H^{\ell+2}(S)}\\
&\label{CMN}
\| [\CN, \D_t] f(t)\|_{H^\ell(S)} \lesssim 
\|u(t)\|_{W^{2,\infty}}\|f(t)\|_{H^{\ell +1}(S)} +  \|f(t)\|_{W^{2,\infty}}\|w(t)\|_{H^{\ell+2}(S)}\\
&\|[\Delta_{S}, \CN] f(t)\|_{H^\ell(S)} \lesssim  
\|u(t)\|_{W^{4,\infty}}\|f(t)\|_{H^{\ell +2}(S)} +  \|f(t)\|_{W^{3,\infty}} \|w(t)\|_{H^{\ell+5/2}(S)}
\label{CMLAPN}
\end{align}
\end{proposition}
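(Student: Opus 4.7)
The plan is to start from the explicit commutator identities already derived in the excerpt and reduce each estimate to Sobolev product inequalities together with elliptic bounds on the harmonic extension. For \eqref{CMLAP} I use formula \eqref{CM1}: the commutator $[\Delta_S,\D_t]f$ is a finite sum of bilinear expressions of the form $\mathcal{A}(h,\nabla h)[\nabla^a f,\nabla^b \nabla\psi_\CH]$ with $a+b\le 3$. Each such term is bounded by the tame product inequality $\|FG\|_{H^\ell}\lesssim \|F\|_{H^\ell}\|G\|_{L^\infty}+\|F\|_{L^\infty}\|G\|_{H^\ell}$, combined with a Moser-type bound for $\mathcal{A}(h,\nabla h)$ which exploits the smallness assumption $\|h\|_{W^{5,p}}\le\varepsilon_0$. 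Pushing the $L^\infty$ norm onto whichever factor has fewer $H^\ell$ derivatives then gives the announced $\|u\|_{W^{3,\infty}}\|f\|_{H^{\ell+2}}+\|f\|_{W^{3,\infty}}\|w\|_{H^{\ell+2}}$ after converting $v=\nabla\psi_\CH|_S$ back to $w$.

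The bound \eqref{CMN} follows the same template applied to formula \eqref{dtn}, with the additional wrinkle that its right-hand side involves interior quantities on $\Omega$ (the harmonic extensions $f_\CH$, $\psi_\CH$, $N_\CH$ and the inverse Laplacian $(-\Delta)^{-1}$). I dispose of these by invoking Proposition \ref{prop:elliptic} and the elliptic estimates of appendix \ref{ap:e}, which control $\|D^k f_\CH\|_{L^\infty(\Omega)}$ and $\|D^k f_\CH\|_{H^\ell(\Omega)}$ by the corresponding boundary norms, modulo lower-order contributions from $h$. The key conversions $\|v\|_{H^s}\sim \|w\|_{H^s}$ and $\|v\|_{W^{k,\infty}}\lesssim \|u\|_{W^{k+1,\infty}}$ (via $v\sim \Lambda\psi =\Lambda^{1/2}(\Lambda^{1/2}\psi)$) are then used to rewrite the output in the desired variables.

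The genuinely new ingredient is \eqref{CMLAPN}, for which no explicit commutator formula has yet been written down. My plan is to exploit the identity \eqref{eq:N2}, which reads $-\Delta_S=\CN^2+R$ with
\[
R\varphi = \kappa\,\CN\varphi - 2\nabla_N(-\Delta)^{-1}\bigl(DN_\CH\cdot D^2\varphi_\CH\bigr) - \CN(N)\cdot\bigl(\CN(\varphi)N+\nabla^\top\varphi\bigr),
\]
an operator of order one in $\varphi$ whose coefficients depend on $h$ through $\kappa$, $N$ and $N_\CH$. Since $[\CN^2,\CN]=0$, this reduces the task to $[\Delta_S,\CN]=-[R,\CN]$. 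I then expand piece by piece: $[\kappa\,\CN,\CN]=[\kappa,\CN]\CN$, which loses one derivative on $\kappa$ (hence two on $h$); the non-local pieces involving $N_\CH$ and $(-\Delta)^{-1}$ are commuted against $\CN$ using the interior elliptic estimates together with pseudo-differential commutator bounds for $\CN\sim|D|$, producing in each case a bilinear expression that is again handled by the tame product rule.

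The main obstacle is exactly the third commutator: naively $[\Delta_S,\CN]$ is of order two, and only the cancellation $[\CN^2,\CN]=0$ allows replacing it by an order-one commutator. Tracking the derivative budget so that the coefficient $h$ (equivalently $u$) appears with at most $W^{4,\infty}$ regularity while the data $f$ appears with at most $H^{\ell+2}$ regularity, and the second-slot field $w$ with at most $H^{\ell+5/2}$, requires at each bilinear product a careful choice of which factor bears $L^\infty$ and which bears $H^\ell$; the non-local character of $\CN$ and of $(-\Delta)^{-1}$ forces us to combine this bookkeeping with the boundary-to-interior elliptic estimates of Proposition \ref{prop:elliptic}.
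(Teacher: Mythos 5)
Your proposal is correct in substance and, for \eqref{CMLAP}, coincides with the paper's argument: read off from \eqref{CM1} that $[\Delta_S,\D_t]$ is second order with coefficients built from two derivatives of $h$ and of $\nabla\psi_\CH$, apply tame product (H\"older--Sobolev) estimates, and convert $\nabla\psi_\CH$ on $S$ into norms of $u$ via Proposition \ref{prop:elliptic}. Where you diverge is in the treatment of the nonlocal pieces. For \eqref{CMN} the paper isolates the term $\Delta^{-1}\bigl(2D^2\psi_\CH\cdot D^2 f_\CH\bigr)$ in \eqref{dtn} and obtains the needed $H^\ell(S)$ control of its normal derivative by applying tangential vector fields $X_a$ to the Dirichlet problem $\Delta w=2D^2\psi_\CH\cdot D^2 f_\CH$, $w|_S=0$, i.e.\ it proves the requisite elliptic estimate by hand in the rough-boundary setting; you instead cite Proposition \ref{prop:elliptic} and appendix \ref{ap:e} as supplying interior $H^\ell(\Omega)$ and $L^\infty(\Omega)$ bounds for harmonic extensions. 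Note that Proposition \ref{prop:elliptic} as stated only gives the $W^{2,\infty}$ bound for $\CN\varphi$, so the boundary-to-interior $H^\ell$ estimates for the inhomogeneous Dirichlet problem that your argument leans on are not literally available in the cited statements; they are exactly what the paper's vector-field step produces, so your route is viable but needs that lemma made explicit rather than attributed to Proposition \ref{prop:elliptic}. For \eqref{CMLAPN} the paper says only that the proof is ``similar to \eqref{CMN}''; your reduction via \eqref{eq:N2}, writing $-\Delta_S=\CN^2+R$ with $R$ of order one and using $[\CN^2,\CN]=0$ to replace $[\Delta_S,\CN]$ by $-[R,\CN]$, is a clean and concrete way to realize that sketch, and the subsequent commutator bookkeeping ($[\kappa\,\CN,\CN]=[\kappa,\CN]\CN$ plus the Dirichlet-problem terms) is consistent with the stated derivative counts, provided the Kato--Ponce-type commutator bounds you invoke for $\CN$ on the rough surface are again justified through the same layer-potential/vector-field estimates rather than by treating $\CN$ as literally $|D|$. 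In short: same skeleton, with your version supplying more detail on \eqref{CMLAPN} but outsourcing to ``standard elliptic estimates'' the one step the paper actually proves.
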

\proof
From equation \eqref{CM1} we note that  $ [\Delta_{S}, \D_t] $ is an operator of order $2$ with coefficients depending on second derivatives of $h$ and  $ \nabla \psi_\CH$ on $S$.  Thus by H\"older and Sobolev inequalities we conclude
\[
\| [\Delta_{S}, \D_t] f(t)\|_{H^\ell(S)} \lesssim 
(\|\nabla \psi_\CH\|_{W^{2,\infty}} + \|h\|_{W^{2,\infty}})\|f(t)\|_{H^{\ell +2}} +  \|f(t)\|_{W^{2,\infty}}(\|\nabla \psi_\CH\|_{H^{\ell+2}} + \|h\|_{H^{\ell+2}}),
\]
and by proposition \ref{prop:elliptic} we conclude inequality \eqref{CMLAP}.  The proof of  \eqref{CMN}  differs from the above by the way we treat $w=  \Delta^{-1} 2D^2 \psi_\CH \cdot D^2 f_\CH$.  This is done by applying vector fields $X_a$,  which are  tangential  to $S$,   to the equation 
$$ 
\Delta w =  2D^2 \psi_\CH \cdot D^2 f_\CH  \qquad w|_S = 0
$$
to obtain $\|\nabla X^\ell_a w\|_{L^2}$ bounds.  The proof of  \eqref{CMLAPN} is similar to the proof of \eqref{CMN}.
\begin{remark}
Proposition \ref{prop:2.5} holds with $\ell$ replaced by $\ell + \frac12$.  This  follows from the definition of  $H^{\ell + \frac12}$. 
\end{remark}

\subsection*{Equation for $\kappa$ and energy estimates}  
In the presence of gravity the evolution of  $\kappa$ can be derived from \cite{sz06} and \cite{sz09} section 6. problem {\bf II}.
\begin{proposition} The evolution equation of the mean curvature $\kappa$ is given by
\begin{equation} \label{eq:kappa}
\D_t^2 \kappa +( - \nabla_N p)\CN \kappa = R
\end{equation}
where  $R = O(\sum_0^2 |D^j\nabla\psi_\CH|^2  + |D^jh|^2)$ in norm.
\end{proposition}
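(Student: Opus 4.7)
The plan is to apply $\D_t$ to the first-order evolution \eqref{dtk}, substitute the Euler equation, and isolate the principal second-order operator acting on $\kappa$.

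Differentiating \eqref{dtk} in time and invoking the commutator formula \eqref{CM1} via Proposition \ref{prop:2.5}, every commutator contribution, as well as the time derivative of the already-quadratic term $-2\Pi \cdot (D^\top \nabla \psi_\CH)$ and the $\D_t N$ produced by differentiating the dot product against $N$, falls into $R$. The remaining leading term is $-(\Delta_S \D_t \nabla \psi_\CH) \cdot N$. The Euler equation gives $\D_t \nabla \psi_\CH = -\nabla p - g e_3$. Now $e_3$ is constant in $\RR^3$, so $\Delta_S e_3 = 0$ componentwise, and $p|_S = 0$ forces the tangential gradient of $p$ to vanish on $S$, i.e.\ $\nabla p|_S = (\nabla_N p) N$. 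Differentiating $|N|^2 \equiv 1$ tangentially yields $\nabla^\top N \cdot N = 0$ and $\Delta_S N \cdot N = -|\Pi|^2$, which gives the pointwise identity $\Delta_S(fN) \cdot N = \Delta_S f - f|\Pi|^2$ for any scalar $f$. Combining these observations,
\[
\D_t^2 \kappa = \Delta_S(\nabla_N p) - (\nabla_N p)|\Pi|^2 + R = \Delta_S(\nabla_N p) + R,
\]
since $(\nabla_N p)|\Pi|^2$ is quadratic in $D^2 h$.

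The key step is to show that $\Delta_S(\nabla_N p) = (\nabla_N p) \CN \kappa + R$. Using the decomposition $p = p_{vv} + g(h_\CH - z)$ from Proposition \ref{p1}, one writes
\[
\nabla_N p = \nabla_N p_{vv} + g \CN h - g(N \cdot e_3).
\]
Here $\nabla_N p_{vv}$ is quadratic in $v$ (since $-\Delta p_{vv} = \operatorname{trace}[(Dv)^2]$ with zero Dirichlet data), and $g(N \cdot e_3) = g(1+|\nabla h|^2)^{-1/2} = g + O(|\nabla h|^2)$ has quadratic surface Laplacian; both contributions absorb into $R$. The substantive identity is therefore $g \Delta_S(\CN h) = -g \CN \kappa + R$. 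The relation \eqref{eq:N2} gives $\Delta_S = -\CN^2 + (\textrm{lower order})$, hence $g\Delta_S(\CN h) = -g \CN^3 h + R$. Applying \eqref{eq:N2} once more together with the definition $\kappa = -(1+|\nabla h|^2)^{1/2}\Delta_S h = \CN^2 h + (\textrm{lower order})$ yields $\CN \kappa = \CN^3 h + R$, closing the match. The leading value $\nabla_N p \approx -g$ is the Rayleigh-Taylor coefficient of the gravity problem, and its deviation from $-g$, which is at least linear in $(h,v)$, multiplies $\CN \kappa$ to give a remainder also in $R$.

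The principal obstacle is precisely this last matching between $g\Delta_S(\CN h)$ and $-g\CN\kappa$, which requires invoking \eqref{eq:N2} twice while carefully tracking how the sub-leading parts of $\nabla_N p$ (notably $\nabla_N p_{vv}$ and the curvature-induced corrections to $N\cdot e_3$) pair against $\CN\kappa$. The remainder bookkeeping relies on Proposition \ref{prop:elliptic} to estimate $\CN$, Proposition \ref{prop:2.5} for commutators with $\D_t$, and the standard Sobolev inequalities recalled in the notation, ensuring every error takes the claimed form $O\bigl(\sum_{j=0}^2 |D^j\nabla\psi_\CH|^2 + |D^j h|^2\bigr)$ in norm.
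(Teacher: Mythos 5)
Your first reduction is fine, and in fact slightly cleaner than the paper's: differentiating \eqref{dtk}, discarding the commutator and $\D_t N$ contributions, inserting Euler, and using $p|_S=0$ together with the identity $\Delta_S(fN)\cdot N=\Delta_S f-f|\Pi|^2$ to arrive at $\D_t^2\kappa=\Delta_S(\nabla_N p)+O(\sum_0^2|D^j\nabla\psi_\CH|^2+|D^jh|^2)$ is a legitimate alternative to the paper's route, which instead quotes (3.15) of [SZ06] and compares $\Delta_S$ with the ambient $\Delta$ acting on $\nabla p$. The gap is in the second half. The paper extracts $\nabla_N p\,\CN\kappa$ with its \emph{exact variable coefficient} (it appears as $(\nabla_N\kappa_\CH)(\nabla_N p)$ when rewriting $\kappa_\CH\nabla_{N_\CH}\nabla_N p$), and every leftover — $\nabla_N\Delta p=-\nabla_N\operatorname{tr}(Dv)^2$, $\kappa\,\nabla p\cdot\nabla_N N_\CH$, $D^2p(N,\nabla_N N_\CH)$, and the term handled by the interior elliptic estimate — is quadratic with at most two derivatives on each factor, which is exactly what the stated form of $R$ requires. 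You instead produce $-g\CN\kappa$ from the flat model ($\CN^2\approx-\Delta_S$, $\kappa\approx-\Delta_S h$) and then declare that the discrepancy $(\nabla_N p+g)\CN\kappa$ is part of $R$. But $\nabla_N p+g=\nabla_N p_{vv}+g\CN h+g\bigl(1-(1+|\nabla h|^2)^{-1/2}\bigr)$ contains the \emph{linear} term $g\CN h$, so your remainder contains $g(\CN h)(\CN\kappa)$; likewise $\Delta_S(N\cdot e_3)$ contributes a term of the schematic form $\nabla h\cdot\nabla\Delta_S h$, and applying $\CN$ to the quadratic corrections in $\kappa=\CN^2h+\dots$ (from \eqref{eq:N2}) contributes $Dh\cdot D^3h$. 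These are quadratic in smallness but have a $1+3$ derivative split, not the $2+2$ split asserted in the proposition.

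This is not cosmetic. In the energy estimate, \eqref{eq:kappa} is used by pairing $(-\Delta_S)^kR$ against $\CN(-\Delta_S)^k\D_t\kappa$, which forces a bound of the type $\|R\|_{H^{2k+1/2}}\lesssim\|u\|_{W^{4,\infty}}\sqrt{E_k}$, and $\sqrt{E_k}$ only controls $\|h\|_{H^{2k+3}}$. A term such as $(\CN h)(\CN\kappa)$ or $\nabla h\cdot\nabla\Delta_S h$, in the frequency regime where the three-derivative factor carries the high frequencies, requires $\|h\|_{H^{2k+7/2}}$ — half a derivative more than $E_k$ provides — so it cannot be treated as a remainder; it must stay attached to the principal part, i.e.\ the coefficient of $\CN\kappa$ must remain the full $-\nabla_N p$ (which is precisely why the paper never linearizes it around $-g$, and why the Rayleigh–Taylor coefficient appears as a variable coefficient in \eqref{eq:kappa} and in $E_k$). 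To close your argument you would need to redo the last step so that $\CN\kappa$ is produced multiplied by $\nabla_N p$ itself — e.g.\ isolate $\nabla_N(\kappa_\CH\nabla_{N_\CH}p)$ as in the paper and control the leftovers using $\Delta p=-\operatorname{tr}(Dv)^2$ and $p|_S=0$ — rather than matching against the constant-coefficient model and sweeping the difference into $R$.
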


\proof From \cite{sz06} equation (3.15) we have
\begin{equation}\begin{split}
&\D_t^2 \kappa =  - \D_t \Delta_{S} v \cdot N - 2 \Pi \cdot (D^\top|_{S} \D_t v) + \Delta_{S} \nabla\psi_\CH\cdot (D^2\psi_\CH)(N)^\top + 2[ \CD \lf(((D^2\psi_\CH)(N))^\top\rt)   \\
&+ \Pi((D^\top|_{S} \nabla\psi_\CH)^\top)] \cdot (D^\top|_{S}\nabla\psi_\CH) + 2\Pi \cdot (\CD \nabla\psi_\CH|_{S})^2  - 2 ((D^2\psi_\CH)^*(N))^\top \cdot \Pi ((D^2\psi_\CH)^*N)^\top)
\label{E:dttk1}\end{split}
\end{equation}
From the  equation for $[\D_t, \Delta_{S}]$  \eqref{CMLAP} and Euler's equations $\D_tv = -\nabla p -ge_3$ we obtain 

\begin{equation*}
\D_t^2 \kappa = N \cdot \Delta_{S} \nabla p + \tilde R
\end{equation*}
where   $\tilde R =  O(\sum_0^2 |D^j\nabla\psi_\CH|^2  + |D^jh|^2)$ in norm.  Computing 
$N \cdot \Delta_{S} \nabla p$ 
\begin{align*}
N \cdot\Delta_{S} \nabla p = &N \cdot \Delta\nabla p  +N \cdot(\Delta_{S} \nabla p - \Delta\nabla p )\\
= &\nabla_N\Delta p - N \cdot ( \kappa\nabla_N \nabla p  +
D^2 (\nabla p) (N, N)\\
= &\nabla_N\Delta p  - N_\CH \cdot ( \kappa_\CH\nabla_{N_\CH} \nabla p  + D^2 (\nabla p) (N_\CH, N_\CH))\\
=&\nabla_N\Delta p  - \nabla_N (\kappa_\CH
\nabla_{N_\CH} p + D^2 p (N_\CH, N_\CH)) \\
&+ \nabla_N
p\CN\kappa +\kappa \nabla p \cdot \nabla_N N_\CH +2 D^2 p (N,
\nabla_N N_\CH).
\end{align*}

Since $\Delta p  = -\text{tr}(Dv)^2 = -\frac12\Delta |\nabla\psi_\CH|^2$ and $ p |_S=0$, then
\[
\|\kappa_\CH\nabla_{N_\CH} p + D^2 p (N_\CH, N_\CH)\|_{H^\ell(S)}= \|\Delta  p - \Delta_S  p \|_{H^\ell(S)} = \|  \text{tr}(Dv)^2\|_{H^\ell(S)}
\]
\[
\| \Delta (\kappa_\CH 
\nabla_{N_\CH} p + D^2 p (N_\CH, N_\CH))   \|_{H^{\ell-3/2} (\Omega)}  \lesssim  
 \|  \text{tr}(Dv)^2\|_{H^{\ell+1/2}(\Omega)}^2    \| \kappa \|_{H^{\ell-1}(S)}
\]
which implies that $N \cdot\Delta_{S} \nabla p =  \nabla_N p\, \CN\kappa  + \tilde{\tilde R}$ where $\tilde{\tilde R}=   O(\sum_0^2 |D^j\nabla\psi_\CH|^2  + |D^jh|^2)$.  This gives the stated  evolution equation for $\kappa$.
\endproof

Based on this equation we define the high energy $E_k(t) = \int_S e_k dS + E$   for  $k \ge 3$ as

\begin{equation}
\label{el}
\begin{split}
 \int_S e_k dS = & \int_S [\CN (-\Delta_S)^k \D_t  \kappa]  (-\Delta_S)^k \D_t  \kappa+ ( -  \nabla_N p)|\CN(-\Delta_S)^k \kappa|^2 \; dS \\
= &\langle \CN ( -\De_S ) ^k \D_t \kappa ,
( - \De_S )^k \D_t \kappa \rangle + \langle  ( - \nabla_N p)
\CN (-\Delta_S )^k \kappa,
\CN (-\Delta_S )^k \kappa \rangle
\end{split}
\end{equation}
where $\langle , \rangle $ denotes the inner product on
$L^2 (S)$.  Note that since  
\[
\begin{split}
&q= p_{v,v} +gh_\CH \Rightarrow
-\nabla_N p = -\nabla_N q + gN\cdot e_3\ge g-c\varepsilon_0 \\
&D_t\kappa\in H^{2k + \frac12} \; \text{and} \; \kappa \in H^{2k + 1}  \Rightarrow \nabla \psi_\CH \in H^{2k +\frac 52} (S)  \; \text {and} \; h \in H^{2k +3} 
\end{split}
 \] 
then  $ E_k \sim  \|\nabla \psi_\CH  \|_{H^{2k +5/2}(S) }^2 + \|h\|^2_{H^{2k + 3} (S) }$.
\begin{proposition} For $t \geq 2$, 
\begin{equation}
\label{elb}
E_k(t) \lesssim E_k(2) + \int_2^t \| {u}(s)\|_{W^{4,\infty}} E_k(s) \; ds
\end{equation}
\end{proposition}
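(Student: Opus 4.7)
\smallskip
\noindent\textbf{Proof plan.} The strategy is to differentiate $E_k(t)$ in time, substitute the evolution equation \eqref{eq:kappa} for $\D_t^2\kappa$, and exploit an exact cancellation of the two leading quadratic forms built into the definition of $e_k$ in \eqref{el}. What remains is a sum of commutator errors and lower-order quadratic contributions that Proposition \ref{prop:2.5} and the hypothesis $R=O(\sum_{j\le 2}|D^j\nabla\psi_\CH|^2+|D^jh|^2)$ allow me to bound by $\|u\|_{W^{4,\infty}}E_k$; integrating in $t$ from $2$ to $t$ yields \eqref{elb}.

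Write $A=(-\De_S)^k$ for brevity. The Reynolds transport formula on the moving surface $S$ (with normal velocity $\CN\psi$) produces a contribution $\lesssim \|u\|_{W^{1,\infty}}E_k$, which is harmless, plus the material-derivative pieces. Expanding $\D_t$ on each of the two quadratic forms in \eqref{el} gives, modulo already-controlled terms,
\[
2\<\CN A\,\D_t^2\kappa,\,A\,\D_t\kappa\> \;+\; 2\<(-\nabla_N p)\CN A\,\D_t\kappa,\,\CN A\,\kappa\> \;+\; \<\D_t(-\nabla_N p)\,\CN A\,\kappa,\,\CN A\,\kappa\>
\]
plus commutators of $\D_t$ with $\CN$ and with $A$. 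Substituting $\D_t^2\kappa=-(-\nabla_N p)\CN\kappa+R$ in the first bracket and pushing $(-\nabla_N p)$, $\CN$ and $A$ through each other (using self-adjointness of $\CN$ and $A$ on $L^2(S)$, up to mean-curvature contributions), the first piece rewrites as $-2\<(-\nabla_N p)\CN A\,\kappa,\,\CN A\,\D_t\kappa\>$ plus commutator remainders; this cancels the second piece exactly.

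The residue splits into three families. (i) Commutators $[\De_S,\D_t]$, $[\CN,\D_t]$, $[\De_S,\CN]$, and the commutators of $(-\nabla_N p)$ with $\CN$ and $\De_S$: Proposition \ref{prop:2.5}, iterated through the $k$ applications of $-\De_S$, bounds each such term by $\|u\|_{W^{4,\infty}}$ times a Sobolev norm of the iterated factor already controlled by $E_k^{1/2}$, since the proposition places the $L^\infty$-type factor on $u$ and the $H^\ell$ factor on the other input. (ii) The $R$-contribution $2\<\CN A\,R,\,A\,\D_t\kappa\>$: because $R$ is quadratic in $\le 2$ derivatives of $\nabla\psi_\CH$ and $h$, a product estimate of Coifman--Meyer type gives $\|R\|_{H^{2k+1}(S)}\lesssim \|u\|_{W^{4,\infty}}(\|\nabla\psi_\CH\|_{H^{2k+5/2}}+\|h\|_{H^{2k+3}})$, so this piece is $\lesssim \|u\|_{W^{4,\infty}}E_k$ by Proposition \ref{prop:2}. (iii) The $\D_t(-\nabla_N p)$ term: differentiating $-\nabla_N p=-\nabla_N q+gN\cdot e_3$ via the Euler equation $\D_t v=-\nabla q$ yields again a bilinear expression in low-order derivatives of $(\nabla\psi_\CH,h)$, estimated in the same way.

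The main obstacle is the derivative bookkeeping in (i): one must verify that commuting $\D_t$ with the full operator $\CN A$, which contains $2k{+}1$ derivatives, never forces a factor of $u$ to absorb more than four derivatives in $L^\infty$. This is precisely why Proposition \ref{prop:2.5} is stated with the $L^\infty$ norms (on $u$ or $f$) decoupled from the top-order $H^{\ell+2}$ or $H^{\ell+5/2}$ norms; iterating $k$ times then produces the clean bound with only $\|u\|_{W^{4,\infty}}$ on the low-regularity side, and Gronwall (or direct integration, since $\|u\|_{W^{4,\infty}}$ is assumed only integrable in time) finishes the proof.
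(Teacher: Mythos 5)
Your proposal is correct and follows essentially the same route as the paper: differentiate $E_k$, use the $\kappa$-equation \eqref{eq:kappa} so that the leading kinetic and potential contributions cancel up to commutators of $\D_t$, $\CN$, $(-\Delta_S)^k$ and $\nabla_N p$, bound those commutators (iterated through the $k$ factors of $-\Delta_S$) by Proposition \ref{prop:2.5}, control $\D_t(-\nabla_N p)$ in $L^\infty$ and the quadratic remainder $R$ by $\|u\|_{W^{4,\infty}}$ times quantities equivalent to $E_k$, and integrate in time. The only differences are presentational (your explicit transport term and the slightly imprecise Sobolev index on $\|R\|$), not substantive.
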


\proof
To compute $\D_t e_k$ we proceed as follows:
\smallskip

\noindent $\bullet$  $|\D_t \nabla_N p|_{L^\infty} 
\le | u(t) |_{W^{4, \infty}}$. 
This follows from the identity  $\nabla p= \nabla p_{v,v} + \nabla h_\CH -g e_3$ and the fact that 
\[\begin{gathered}
p_{v,v} = - \De^{-1} D^2 \psi_{\CH} \cdot D^2 \psi_{\CH} , \\
\D_t \De^{-1} f 
= \De^{-1} \D_t f
+ \De^{-1} 
(2 D^2 \psi_{\CH} D^2 \De^{-1} f) .
\end{gathered} \]

\noindent $\bullet$  
$\D_t \CN (\Delta_S  )^k \kappa
= \CN \D_t (-\Delta)^k \kappa
+ [\D_t, \CN ]
(-\Delta_S  )^k \kappa$.  From \eqref{CMN} we have
\[
| \langle [\D_t, \CN ] (-\Delta_S  )^k \kappa, 
\CN (-\Delta_S  )^k \kappa \rangle |
\le |u|_{W^{4, \infty} }\,
E_k(t) .
\]

\noindent $\bullet$   $ \displaystyle{
\CN \D_t  (-\Delta_S  )^k \kappa
= \CN (-\Delta_S  )^k \D_t\kappa
+ \sum^{k -1}_{i =0 }
\CN (-\Delta_S )^{i}  [\D_t, -\Delta_S  ]
(-\Delta_S ) ^{ k -i -1} \kappa}$.  Since $[\D_t, \Delta_S]$ is a second order operator
we have from \eqref{CMLAP}
\[
\langle \CN\D_t  (-\Delta_S  )^k \kappa,
\CN (-\Delta_S  )^k \kappa \rangle
= \langle  \CN (-\Delta_S  )^k \D_t\kappa,
\CN (-\Delta_S  )^k \kappa \rangle
\\+ O ( |u|_{W^{4, \infty} }\,
E_k(t) ) .
 \]
\vskip .1cm

\noindent $\bullet$  Computing $\p_t  \langle \CN (-\Delta_S  )^k \D_t\kappa,
 (-\Delta_S  )^k \D_t\kappa \rangle$
\begin{multline*}
\p_t  \langle \CN (-\Delta_S  )^k \D_t\kappa,
(-\Delta_S  )^k \D_t\kappa \rangle 
= 2 \langle \D_t  (-\Delta_S  )^k \D_t\kappa,
\CN (-\Delta_S  )^k \D_t\kappa \rangle \\+ \langle  [\D_t, \CN] (-\Delta_S  )^k \D_t\kappa , (-\Delta_S  )^k \D_t\kappa \rangle
\end{multline*}

Since $[\D_t, \CN]$ is a first order operator 
we have from \eqref{CMN}
\[
| \langle  [\D_t, \CN] (-\Delta_S  )^k \D_t\kappa ,
(-\Delta_S  )^k \D_t\kappa \rangle |
\le |u|_{W^{4, \infty} }\,
E_k(t)  
\]

\noindent $\bullet$ 
\( \displaystyle{
\D_t (-\Delta_S  )^k \D_t\kappa
= (-\Delta_S  )^k \D_t^2 \kappa
+\sum^{k -1}_{i=0}
(-\Delta_S  )^i  [\D_t , -\Delta_S ]
(-\Delta_S  )^{k -i -1} \D_t \kappa.
}\)  Since  $[\D_t, \CN]$ is a second order operator 
we have by \eqref{CMLAP}
\[
\langle \D_t (-\Delta_S  )^k \D_t\kappa,
\CN (-\Delta_S  )^k \D_t\kappa \rangle
=  \langle (-\Delta_S  )^k \D_t^2 \kappa,
\CN (-\Delta_S  )^k \D_t\kappa \rangle
\\+O (  |u|_{W^{4, \infty} }\,
E_k(t) )
\]
Using equation \eqref{eq:kappa} we obtain
\[
\frac d{dt}\, E_k
= 2 \langle( -  \nabla p)
\CN (-\Delta_S  )^k \kappa -  (-\Delta_S  )^k ( \nabla_N p ) \CN \kappa \;\;,\; \;
\CN (-\Delta_S  )^k \D_t \kappa \rangle\\ +O (  |u|_{W^{4, \infty} }\,
E_k(t) )
\]
Commuting $\nabla_N p$ and $\CN$ with $(-\Delta_S )^k$
and using the fact that $[\CN, \Delta_S  ]$ is a second order operator
with error bounds given in \eqref{CMLAPN}
we conclude that
\[
\frac d{dt}\, E_k (t)
= O (  |u|_{W^{4, \infty} }\,
E_k(t) )
\]
and thus
\[
E_k (t)
\le E_k (2)
+ \int^t_2
C(\varepsilon_0 )
 |u|_{W^{4, \infty} }\,
E_k(t) \, ds.
\]
\endproof
Thus from the assumption   $\|h\|_{W^{5, p} (\RR^2)} \le \varepsilon_0$,  reference \cite{sz09}, and a limiting argument to go from smooth   initial data to data  in  $H^N$ where $N= 2k+3 \gg 5/2$, we have local well posedness  and the  energy estimate stated in the above proposition.

\section{Space time resonances of quadratic and cubic terms}

\label{decay}

Expanding $(WW)$ in powers of $h$ and $\psi$, setting $g=1$,  and keeping track of quadratic and cubic terms we obtain 
\begin{equation}
\label{aaa}
\left\{
\begin{array}{l} \partial_t h = \Lambda \psi - \nabla \cdot (h \nabla \psi) - \Lambda (h\Lambda \psi) - \frac{1}{2} \left( \Lambda(h^2 \Lambda^2 \psi) + \Lambda^2(h^2 \Lambda \psi) - 2\Lambda(h\Lambda(h \Lambda \psi)) \right) +R_1 \\
\partial_t \psi = -h - \frac{1}{2}|\nabla \psi|^2 + \frac{1}{2} |\Lambda \psi|^2 + \Lambda \psi (h \Lambda^2 \psi - \Lambda (h \Lambda \psi)) +R_2.\end{array}
\right.
\end{equation}
where $R_1$ and $R_2$ are of order $4$.  We refer to the book of Sulem and Sulem~\cite{sulemsulem} for  the above expansion (also see the remark at the end of appendix \ref{ap:f}).

\subsection*{Writing the equation in Fourier space} Recall that
$$
u \overset{def}{=} (h+ i\Lambda^{\frac 12} \psi)\quad,\quad u_0 \overset{def}{=} e^{-2i\Lambda^{1/2}}(h_0+ i\Lambda^{\frac 12} \psi_0)\quad\mbox{and}\quad
f \overset{def}{=} e^{it\Lambda^{1/2}} u = e^{it\Lambda^{1/2}} (h + i \Lambda^{1/2} \psi ).
$$
Writing  Duhamel formula for $f$ in Fourier space yields
\begin{multline}
\label{eqfourier}
\widehat{f}(t,\xi) =  \widehat u_0(\xi) + \sum_{\tau_{1,2} = \pm} \sum_{j=1}^2 c_{j,\tau_1,\tau_2} \int_2^t \!\!\int e^{is\phi_{\tau_1,\tau_2}} m_j(\xi,\eta) \widehat{f_{-\tau_1}}(s,\eta) \widehat{f_{-\tau_2}}(s,\xi-\eta)\,d\eta \,ds \\
+ \sum_{\tau_{1,2,3}= \pm} \sum_{j=3}^4 c_{j,\tau_1,\tau_2,\tau_3} \int_2^t \!\!\int\!\!\!\int e^{is\phi_{\tau_1,\tau_2,\tau_3}} m_j(\xi,\eta,\sigma) \widehat{f_{-\tau_1}}(s,\eta) \widehat{f_{-\tau_2}}(s,\sigma) \widehat{f_{-\tau_3}}(s,\xi-\eta-\sigma)\,d\eta\, d\sigma\, ds \\
+ \int_2^t e^{is |\xi|^{1/2}} \widehat{R} (s,\xi )\,ds \\
\end{multline} 
where $c_{j,\pm,\pm}$ and $c_{j,\pm,\pm,\pm}$ are complex coefficients, $f_{+} \stackrel{def}{=} f$, $  f_{-} \stackrel{def}{=} \bar{f}$, and $R \overset{def}{=}R_1+i\Lambda^{\frac 12}R_2$ is the remainder term of order $4$.  The phases are given  by
\begin{equation*}
\begin{aligned}
& \phi_{\pm,\pm}(\xi,\eta) = |\xi|^{1/2} \pm |\eta |^{1/2} \pm |\xi-\eta|^{1/2} \\
& \phi_{\pm,\pm,\pm}(\xi,\eta,\sigma) = |\xi|^{1/2} \pm |\eta |^{1/2} \pm |\sigma|^{1/2} \pm |\xi-\eta-\sigma|^{1/2}      ,
\end{aligned}
\end{equation*}
and the multilinear symbols are defined by
\begin{equation*}
\begin{aligned}
& m_1(\xi,\eta) \overset{def}{=} \frac{1}{|\eta|^{1/2}}\left(\xi \cdot \eta - |\xi||\eta| \right)\\
& m_2(\xi,\eta) \overset{def}{=} \frac{1}{2} \frac{|\xi|^{1/2}}{|\eta|^{1/2}|\xi-\eta|^{1/2}} \left( \eta \cdot (\xi-\eta) + |\eta||\xi-\eta| \right) \\
& m_3(\xi,\eta,\sigma) \overset{def}{=} - \frac{1}{2} |\xi|\left(|\xi-\eta-\sigma|^{3/2} + |\xi||\xi-\eta-\sigma|^{1/2} - 2 |\xi-\eta||\xi-\eta-\sigma|^{1/2} \right) \\
& m_4(\xi,\eta,\sigma) \overset{def}{=} |\xi|^{1/2} |\eta|^{1/2} \left( |\xi-\eta-\sigma|^{3/2} - |\xi-\eta| |\xi-\eta-\sigma|^{1/2} \right) .
\end{aligned}
\end{equation*}

Note that $m_1$ and $m_2$ are homogeneous of degree $3/2$ and that $m_3$ and $m_4$ are homogeneous of degree $5/2$.  
Also, note that since these multilinear forms are homogeneous,   we only need to estimate them on the sphere $|\xi|^2 +|\eta|^2= 1$, or $|\xi|^2 +|\eta|^2 + |\sigma|^2= 1$ and extend  all estimates by homogeneity.  Moreover the exact form of the above equation is not really important; thus  in order to focus on the information which is relevant to us,  \emph{we shall ignore from now on the distinction between $f_+$ and $f_-$ whenever this notation occurs}.

\subsection*{Examination of the quadratic symbols}

\label{quadsymb}

The symbols $m_1$ and $m_2$ have two important features, they vanish  when one of the Fourier coordinates ($\xi$, $\eta$ or $\xi-\eta$) is zero, and they are not smooth. These two facts are made more precise in the following lines.

Notice that the vanishing of $m_1$ and $m_2$ is crucial: as we will see, it corresponds to a null property on the time resonant set; on the other hand, the lack of smoothness is a hindrance, since it prevents one from applying the standard Coifman-Meyer theorem \cite{coifmanmeyer}.

We always use the convention that $\mathcal{A}$ stands for a smooth function in all its arguments and start with $m_1$:
\begin{itemize}
\item If $|\eta|<<|\xi|\sim 1$, $m_1(\xi,\eta) = |\eta|^{1/2} \mathcal{A}\left( |\eta|^{1/2},\frac{\eta}{|\eta|},\xi \right)$.
\item If $|\xi|<<|\eta|\sim 1$, $m_1(\xi,\eta) = |\xi| \mathcal{A}\left(\frac{\xi}{|\xi|},\eta\right)$.
\item If $|\xi-\eta| << |\xi|\sim 1$, $m_1(\xi,\eta) = |\xi-\eta|^2 \mathcal{A}\left(|\xi-\eta|^{1/2},\frac{\xi-\eta}{|\xi-\eta|},\xi \right)$.
\end{itemize}

Now $m_2$:
\begin{itemize}
\item If $|\eta|<<|\xi|\sim 1$, $m_2(\xi,\eta) = |\eta|^{1/2} \mathcal{A}\left( |\eta|^{1/2},\frac{\eta}{|\eta|},\xi \right)$.
\item If $|\xi|<<|\eta|\sim 1$, $m_2(\xi,\eta) = |\xi|^{5/2} \mathcal{A}\left(\eta,\xi\right)$.
\item If $|\xi-\eta| << |\xi|\sim 1$, $m_2(\xi,\eta) = |\xi-\eta|^{1/2} \mathcal{A}\left(|\xi-\eta|^{1/2},\frac{\xi-\eta}{|\xi-\eta|},\xi\right)$.
\end{itemize}

\subsection*{Space and time resonances} We define the bilinear and trilinear time resonant sets as
$$
\mathscr{T}_{\pm,\pm} = \{ \phi_{\pm,\pm} = 0 \}, \quad\mbox{ \phantom{respectively}}\quad\mathscr{T}_{\pm,\pm,\pm} = \{ \phi_{\pm,\pm,\pm} = 0 \}
$$
respectively.  We define the bilinear and trilinear space resonant sets as
$$
\mathscr{S}_{\pm,\pm} = \{\nabla_\eta \phi_{\pm,\pm} = 0 \} \mbox{,  \phantom{respectively}}\quad\mathscr{S}_{\pm,\pm,\pm} = \{\nabla_{\eta,\sigma} \phi_{\pm,\pm,\pm} = 0 \}
$$
respectively. 
The  bilinear and trilinear  space-time resonant sets are given by
$$
\mathscr{R}_{\pm,\pm} = \mathscr{S}_{\pm,\pm} \cap \mathscr{T}_{\pm,\pm} \quad\mbox{,  \phantom{respectively}}\quad\mathscr{R}_{\pm,\pm,\pm} = \mathscr{S}_{\pm,\pm,\pm} \cap \mathscr{T}_{\pm,\pm,\pm}.
$$
respectively.

\subsection*{Examination of the quadratic phases}

The phase $\phi_{++}=|\xi|^{1/2}+|\eta|^{1/2}+|\xi-\eta|^{1/2}$ is better behaved than the others, since it only vanishes at $(\xi,\eta)=(0,0)$.
Therefore we focus on the three other quadratic phases, namely $\phi_{--}$, $\phi_{-+}$ and $\phi_{+-}$. Up to multiplication by $-1$, and permutation of the three Fourier variables $\eta$, $\xi$, $\xi-\eta$, these three phases are the same. Let us consider  
$$
\phi_{--}(\xi,\eta) = |\xi|^{1/2} - |\eta|^{1/2} - |\xi-\eta|^{1/2}.
$$
A small computation shows that
\begin{equation*}
\mathscr{T}_{--} = \{ \eta = 0 \quad\mbox{or}\quad\xi-\eta = 0 \},
\end{equation*}
and  the vanishing of $\phi_{--}$ may be  described as follows
\begin{equation}
\label{vanishphi}
\mbox{if $|\eta|<<|\xi|\sim 1$,}\quad\phi_{--}(\xi,\eta) = |\eta|^{1/2}\mathcal{A} \left( |\eta|^{1/2},\frac{\eta}{|\eta|},\xi \right) \quad\mbox{with $\mathcal{A}(0,\cdot,\cdot) = -1$}
\end{equation}
(the case $\xi-\eta = 0$ being identical up to an obvious change of variables).
Finally, along the surface $\{ \xi = 0 \}$, $\phi_{--}$ does not vanish, but is not smooth:
\begin{equation}\label{eq:addaxis}
\mbox{if $|\xi|<<|\eta|\sim 1$,}\quad\phi_{--}(\xi,\eta) = \mathcal{A} \left( |\eta|^{1/2},\frac{\eta}{|\eta|},\xi \right) + |\xi|^{1/2} \mathcal{A}' \left( |\eta|^{1/2},\frac{\eta}{|\eta|},\xi \right)
\end{equation}
with $\mathcal{A}(0,\cdot,\cdot) = - 2 |\eta|^{1/2}$. 
As we shall see, it turns out that quadratic terms can be treated simply by a normal form transform, thus there is no need to investigate the quadratic space resonant set.

\subsection*{Examination of the cubic phases}

\label{examinationcubic}

Cubic phases fall into two categories: some have relatively few time resonances, we call them 'weakly resonant phases'; and some give a large space-time resonant set, we call them 'strongly resonant phases'.

\subsubsection*{The weakly resonant phases: $\phi_{+++}$, $\phi_{-++}$, $\phi_{+-+}$, $\phi_{++-}$, $\phi_{---}$}

As in the case of quadratic phases, the phase $\phi_{+++} = |\xi|^{1/2}+|\eta|^{1/2}+|\sigma|^{1/2} + |\xi-\eta-\sigma|^{1/2}$ is easily dealt with, since it only vanishes at $(\xi,\eta,\sigma)=(0,0,0)$. 

The four other quadratic phases, $\phi_{-++}$, $\phi_{+-+}$, $\phi_{++-}$, $\phi_{---}$, are identical, up to multiplication by $-1$, and permutation of the four Fourier variables $\xi$, $\eta$, $\sigma$, $\xi-\eta-\sigma$.
Let us therefore focus on 
$$
\phi_{---}(\xi,\eta,\sigma) = |\xi|^{1/2} - |\eta|^{1/2} - |\sigma|^{1/2} - |\xi-\eta-\sigma|^{1/2}.
$$
It is easily seen that
$$
\mathscr{T}_{---} = \{ \eta = \sigma = 0 \quad\mbox{or}\quad \sigma =\xi-\eta-\sigma= 0 \quad\mbox{or}\quad \eta =\xi-\eta-\sigma= 0 \}.
$$
The vanishing of $\phi_{---}$ on this set can be more precisely described as follows
\begin{equation}
\label{ZZZ}
\mbox{if $|\eta|,|\sigma| << |\xi|\sim 1$,} \quad\phi_{---} = - |\eta|^{1/2} - |\sigma|^{1/2} + \mathcal{A} (\xi,\eta,\sigma)[\eta+\sigma] .
\end{equation}
(recall $\mathcal{A} (\xi,\eta,\sigma)[\eta+\sigma]$ stands for a function smooth in its three first arguments and linear in the fourth).
The other cases $\sigma =\xi-\eta-\sigma= 0$ and $\eta =\xi-\eta-\sigma= 0$ are the same up to a change of variables.  Finally, $\phi_{---}$ is not smooth along the axes $\{\xi =0\}\bigcup \{\eta =0\}\bigcup\{\sigma =0\}\bigcup \{\xi-\eta-\sigma =0\}$.  In a neighborhood of these axes it can be written in a form similar to \eqref{eq:addaxis}.

As for the quadratic phases, it turns out that a normal form transform is sufficient to treat the weakly resonant cubic terms, therefore, we do not investigate the space resonant set.

\subsubsection*{The strongly resonant phases: $\phi_{--+}$, $\phi_{-+-}$, $\phi_{+--}$}

Once again, these three phases are identical up to a permutation of the Fourier variables $\eta$, $\sigma$ and $\xi-\eta-\sigma$. We therefore focus on one of them, namely
$$
\phi_{--+} = |\xi|^{1/2} - |\eta|^{1/2} - |\sigma|^{1/2} + |\xi-\eta-\sigma|^{1/2}.
$$
In this case, the time resonant $\mathscr{T}_{--+}$ set has dimension $5$, and $\phi_{--+}$ vanishes at order $1$ on it: this makes a normal form transform almost necessarily unbounded. We therefore need to turn to the space resonant set, which is
$$
\mathscr{S}_{--+} = \{ \xi = \eta = \sigma \},
$$
and this set is contained in $\mathscr{T}_{--+}$. In other words $\mathscr{S}_{--+} = \mathscr{R}_{--+}$ and it has dimension equal to $2$ and 
the phase  vanishes at order $1$, therefore  an argument based on an integration by parts in the $\eta$ and $\sigma$ seems doomed to fail.

The way out of this problem appears if one develops $\partial_\xi \phi_{--+}$, $\partial_\eta \phi_{--+}$ and $\partial_\sigma \phi_{--+}$ in a neighborhood of $\mathscr{S}_{--+}$. Namely, if $|\eta-\xi|,|\sigma-\xi| << |\xi|\sim 1$,
\begin{equation*}
\begin{split}
& \partial_\eta \phi_{--+} = \frac{1}2{|\xi|^{3/2}}\left( (\sigma-\xi)+\frac{3}{2}(\xi-\sigma)\cdot\frac{\xi}{|\xi|} \frac{\xi}{|\xi|} + \mathcal{A}(\xi,\eta,\sigma)[(\eta-\xi,\sigma-\xi),(\eta-\xi,\sigma-\xi)] \right) \\
& \partial_\sigma \phi_{--+} = \frac{1}{2|\xi|^{3/2}}\left( (\eta-\xi)+\frac{3}{2}(\xi-\eta)\cdot\frac{\xi}{|\xi|} \frac{\xi}{|\xi|} + \mathcal{A}(\xi,\eta,\sigma)[(\eta-\xi,\sigma-\xi),(\eta-\xi,\sigma-\xi)] \right) \\
& \partial_\xi \phi_{--+} = \frac{1}{2|\xi|^{3/2}}\Big( (2\xi-\eta-\sigma)+\frac{3}{2}(\eta+\sigma-2\xi)\cdot\frac{\xi}{|\xi|} \frac{\xi}{|\xi|} +  \mathcal{A}(\xi,\eta,\sigma)[(\eta-\xi,\sigma-\xi),(\eta-\xi,\sigma-\xi) \Big)
\end{split}
\end{equation*}
(in the above expressions, $\mathcal{A}$ is smooth in the three first arguments, and bilinear in the arguments between brackets).
The above expressions imply that $\partial_\xi \phi = - \partial_\eta \phi - \partial_\sigma \phi$ up to second order terms. Therefore, if $|\eta-\xi|,|\sigma-\xi| << |\xi|\sim 1$,
\begin{equation}
\label{cat}
\partial_\xi \phi_{--+} = \mathcal{A}(\xi,\eta,\sigma)[\partial_\eta \phi_{--+},\partial_\sigma \phi_{--+}].
\end{equation}
This identity will enable us to convert $\xi$ derivatives of $\phi$ into $\eta$ and $\sigma$ derivatives of $\phi$. The former occur when applying $\partial_\xi$ to our multilinear expression (which corresponds to the $x$ weight), and are problematic since they come with a $t$ or $s$ factor; the latter are harmless: an integration by parts gets rid of them. See Section~\ref{sectionstrong} for the details.

\section{Normal form transform}
\label{normalform}
Integrate by parts in $s$ the quadratic terms of~(\ref{eqfourier}) with the help of the formula 
$\frac{1}{i\phi_{\pm,\pm}} \partial_s e^{is \phi_{\pm,\pm}} = e^{is \phi_{\pm,\pm}}$. Doing so, the $\partial_s$ derivative will hit $\widehat{f}(s,\eta)$ or $\widehat{f}(s,\xi-\eta)$; then use~(\ref{eqfourier}) to substitute for $\partial_s \widehat{f}(s,\eta)$ or $\partial_s \widehat{f}(s,\xi-\eta)$. This gives
\begin{equation*}
\begin{aligned}
& \int_2^t \!\!\int e^{is\phi_{\tau_1,\tau_2}} m_l(\xi,\eta) \widehat{f}(s,\eta) \widehat{f}(s,\xi-\eta)\,d\eta ds \\
& = \left. \int e^{is\phi_{\tau_1,\tau_2}} \frac{m_l(\xi,\eta)}{i \phi_{\tau_1,\tau_2}} \widehat{f}(s,\eta) \widehat{f}(s,\xi-\eta)\,d\eta \right]_2^t - \int_2^t \!\!\int e^{is\phi_{\tau_1,\tau_2}} \frac{m_l(\xi,\eta)}{i \phi_{\tau_1,\tau_2}} \partial_s \left[ \widehat{f}(s,\eta) \widehat{f}(s,\xi-\eta) \right] \,d\eta ds \\
&= \left. \int e^{is\phi_{\tau_1,\tau_2}} \frac{m_l(\xi,\eta)}{i \phi_{\tau_1,\tau_2}} \widehat{f}(s,\eta) \widehat{f}(s,\xi-\eta)\,d\eta \right]_2^t  - \sum_{\widetilde{\tau_1},\widetilde{\tau_2}=\pm} \sum_{j=1,2} c_{\widetilde{\tau_1},\widetilde{\tau_2},j}\times\Big {\{}\\
&\;\;\;\;\;\; \int_2^t \!\!\int \!\!\!\int e^{is\phi_{\tau_1,\widetilde{\tau_1},\widetilde{\tau_2}}} \frac{m_l(\xi,\eta)}{i \phi_{\tau_1,\tau_2}}  m_j(\xi-\eta,\sigma) \widehat{f}(s,\eta) \widehat{f}(s,\sigma) \widehat{f}(s,\xi-\eta-\sigma) \,d\eta \,d\sigma \,ds\;  +\\
& \;\;\;\;\;\;
\int_2^t \!\!\int \!\!\!\int e^{is(|\xi|^{1/2}+\tau_1 |\eta|^{1/2})} \frac{m_l(\xi,\eta)}{i \phi_{\tau_1,\tau_2}} \widehat{f}(s,\eta) \widehat{Q}(\xi-\eta) \,d\eta \,ds\Big{\}}\, + \mbox{\{ symmetric terms \}} ,
\end{aligned}
\end{equation*}
where the ``symmetric terms'' come from the fact that $\partial_s$ may hit either $\widehat{f}(s,\xi-\eta)$ or $\widehat{f}(s,\eta)$, thus these symmetric terms look very much like the above ones with $\eta$ and $\xi-\eta$ exchanged. The term $Q$ corresponds to terms of order 3 and higher in $e^{-it\Lambda^{1/2}} \partial_t f$ i.e.
$$
\widehat{Q}(t,\xi) \overset{def}{=} \sum_{\tau_{1,2,3}= \pm} \sum_{j=3}^4 c_{i,\tau_1,\tau_2,\tau_3} \int\!\!\!\int m_j(\xi,\eta,\sigma) \widehat{u}(t,\eta) \widehat{u}(t,\sigma) \widehat{u}(t,\xi-\eta-\sigma)\,d\eta\, d\sigma + \widehat{R} (t,\xi ) .
$$
The normal form transform that we just performed gives quadratic (without time integration), 
cubic, and higher order terms. Also, cubic and higher order terms occurring in~(\ref{eqfourier}) need to be taken into account. In the end, we see that $f$ can be written as a sum of quadratic terms of the type
\begin{equation}
\label{butterfly1}
 \left.   \int e^{it\phi_{\pm,\pm}} \frac{m_l(\xi,\eta)}{i \phi_{\pm,\pm}} \widehat{f}(t,\eta)\widehat{f}(t,\xi-\eta)\,d\eta \right|_2^t \quad\mbox{with $l=1,2$}
\end{equation}
(here it is understood that the symbol $\pm$ stands each time for either $+$ or $-$),  cubic terms of the type
\begin{subequations}
\begin{gather}
\label{butterfly2}
\int_2^t \!\!\int \!\!\!\int e^{is\phi_{\pm,\pm,\pm}} \frac{m_l(\xi,\eta)}{i \phi_{\pm,\pm}}  m_j(\xi-\eta,\sigma) \widehat{f}(s,\eta) \widehat{f}(s,\sigma) \widehat{f}(s,\xi-\eta-\sigma) \,d\eta \,d\sigma \,ds \quad\mbox{with $l,j=1,2$}\\
\label{butterfly3}
\int_2^t \!\!\int \!\!\!\int e^{is\phi_{\pm,\pm,\pm}} m_l(\xi,\eta,\sigma) \widehat{f}(s,\eta) \widehat{f}(s,\sigma) \widehat{f}(s,\xi-\eta-\sigma)\,d\eta\,d\sigma\,ds \quad\mbox{with $l=3,4$},
 \end{gather}
\end{subequations}
and terms of order 4 of the type
\begin{subequations}
\begin{gather}
\label{butterfly4}
\int_2^t e^{is|\xi|^{1/2}} \widehat{R}(s,\xi)\,ds \\
\label{butterfly5}
\int_2^t \!\!\int \!\!\!\int e^{is(|\xi|^{1/2}+\tau_1 |\eta|^{1/2})} \frac{m_l(\xi,\eta)}{i \phi_{\tau_1,\tau_2}} \widehat{f}(s,\eta) \widehat{Q}(s,\xi-\eta) \,d\eta \,ds
\end{gather}
\end{subequations}

\section{Estimates for the quadratic terms}

\label{quadratic}

In this section, we derive the desired estimates in $L^\infty$ and $L^2(x^2dx)$ for terms of the form~(\ref{butterfly1}). We shall denote a generic term of this type by
\begin{equation}
\label{defg1}
\widehat{g_1}(t,\xi) \overset{def}{=} \int e^{it\phi} \mu(\xi,\eta) 
\widehat{f}(t,\eta) \widehat{f}(t,\xi-\eta)\,d\eta ,
\end{equation}
where
$$
\phi  \overset{def}{=}  \phi_{\pm,\pm}\quad\mbox{and}\quad\mu(\xi,\eta) \overset{def}{=} \frac{m_l(\xi,\eta)}{i \phi_{\pm,\pm}} ,
$$ 
with the index $l$ equal to $1$ or $2$, and $\pm$ either $+$ or $-$.

Of course, we adopt this lighter notation because the precise value of $\pm$ or $l$ will not affect the argument which follows.

\subsection*{Preliminary observations and reductions}

\label{preliminary}

Let us first consider the symbol $\mu(\xi,\eta)$ occurring in the definition of $g_1$: regardless of the precise indices $l$ and $\pm,\pm$, it is
\begin{itemize}
\item Homogeneous of degree 1.
\item Smooth if none of $\eta$, $\xi$, $\xi-\eta$ vanish.
\item Of the form $\mu(\xi,\eta) = \mathcal{A}\left( |\eta|^{1/2},\frac{\eta}{|\eta|},\xi \right)$ if $|\eta| << |\xi|\sim 1$.
\item Of the form $\mu(\xi,\eta) = \mathcal{A}\left( |\xi-\eta|^{1/2},\frac{\xi-\eta}{|\xi-\eta|},\xi \right)$ if $|\xi-\eta| << |\xi|\sim 1$.
\item Of the form $\mu(\xi,\eta) = |\xi|^{1/2} \mathcal{A}\left( |\xi|^{1/2},\frac{\xi}{|\xi|},\eta \right)$ if $|\xi| << |\eta|\sim 1$,
\end{itemize}
the last three points follow from the developments given in Section~\ref{quadsymb}. Thus
we conclude that $\mu$ belongs to the class $\mathcal{B}_1$, see appendix~\ref{appendquad} for the definition.

\bigskip

Next, define a cut off function $\chi(\xi,\eta)$ which is valued in $[0,1]$, homogeneous of degree $0$, smooth outside of $(0,0)$, and such that $\chi(\xi,\eta) = 0$ in a neighborhood of $\{ \eta = 0 \}$, and $\chi(\xi,\eta) = 1$ in a neighborhood of $\{ \xi-\eta = 0 \}$ on the sphere. Then one can split $g_1$ as follows
\begin{equation*}
\begin{split}
\widehat{g_1}(\xi) = & \int e^{it\phi} \chi(\xi,\eta) \mu(\xi,\eta) 
\widehat{f}(t,\eta) \widehat{f}(t,\xi-\eta)\,d\eta \\
& + \int e^{it\phi} \left[ 1 - \chi(\xi,\eta) \right] \mu(\xi,\eta) 
\widehat{f}(t,\eta) \widehat{f}(t,\xi-\eta)\,d\eta.
\end{split}
\end{equation*}
By symmetry it suffices to consider the first term of the above right-hand side, which corresponds to a region where  $|\eta| \gtrsim |\xi|$,  $ |\xi-\eta|$ (the interest of that condition is that $\xi$ derivatives always hit $\widehat{f}(\xi-\eta)$, which corresponds to low frequencies). Thus in the following, we shall consider that
$$
\widehat{g_1}(\xi) = \int e^{it\phi} \chi(\xi,\eta) \mu(\xi,\eta) 
\widehat{f}(t,\eta) \widehat{f}(t,\xi-\eta)\,d\eta
$$
which means in physical space, using the notation introduced in appendix~\ref{appendixpp},
$$
g_1 = e^{ i t \Lambda^{1/2}} B_{\chi \mu} (u,u).
$$
Actually, depending on the $\pm,\pm$ appearing in $\phi_{\pm,\pm}$ the $u$ can be $u$ or $\bar u$; but as indicated in Section~\ref{decay}, we ignore for the sake of simplicity in the notations this distinction.  
Also, it is important to notice here that, since $\mu \in \mathcal{B}_1$ and $\chi \in \widetilde{\mathcal{B}}_0$, $\mu \chi \in \widetilde{\mathcal{B}}_1$.

\subsection*{Quantities controlled by the $X$ norm} 
Before estimating the term $g_1$ in $X$, we give some general estimates that will be useful in the whole proof. 
Interpolating between the different components of the $X$ norm gives the following lemma.  Let  $P_j$, $P_{>j}$, $P_{\leq j}$ denote the Littlewood-Paley projections defined in Section~\ref{appendixlin}
\begin{lemma}\label{pbound}
(i) If $2\leq p \leq \infty$ and $k<N + \frac{2}{p} - 1$, $\displaystyle \left\| \nabla^k P_{>j} u \right\|_p \lesssim 2^{j\left(-N+k+1-\frac{2}{p}\right)} t^\delta \|u\|_X$. 

\medskip

(ii) If $2\leq p \leq \infty$ and $k<N + \frac{2}{p} - 1$, $\displaystyle \|\nabla^k u\|_p \lesssim t^{-1 +\frac{2}{p} + \frac{k}{N+\frac{2}{p}-1}\left[ \delta - \frac{2}{p} + 1 \right]} \|u\|_X$.

\medskip

(iii) If $1< p \le2$, $\|f\|_p \lesssim t^{\left( \frac{2}{p}-1 \right) \delta} \|u\|_X$.
\end{lemma}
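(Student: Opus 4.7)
The plan is to handle the three parts in order, each via Littlewood--Paley reductions and interpolation against the four pieces of the $X$ norm.

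For (i), I would decompose $P_{>j} u = \sum_{k' \geq j} P_{k'} u$ and apply Bernstein twice. First, $\|\nabla^k P_{k'} u\|_p \lesssim 2^{k'(1-2/p)}\|\nabla^k P_{k'} u\|_2$ for $p \geq 2$, and then $\|\nabla^k P_{k'} u\|_2 \lesssim 2^{-k'(N-k)} \|u\|_{H^N}$. Multiplying, the power of $2^{k'}$ is $k+1-\frac{2}{p}-N$, which is negative exactly under the hypothesis $k < N+\frac{2}{p}-1$, so the geometric sum over $k' \geq j$ is controlled by the term at $k'=j$. Bounding $\|u\|_{H^N} \leq t^\delta \|u\|_X$ finishes (i).

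For (ii), I would split $\nabla^k u = \nabla^k P_{\leq j} u + \nabla^k P_{>j} u$. The high-frequency piece is given by (i). For the low-frequency piece, observe that $\|P_{k'} u\|_p \leq \|P_{k'} u\|_2^{2/p} \|P_{k'} u\|_\infty^{1-2/p} \lesssim \|u\|_2^{2/p}\|u\|_\infty^{1-2/p}\lesssim t^{-1+2/p}\|u\|_X$, using that the $X$ norm controls both $\|u\|_2$ and $t\|u\|_{W^{4,\infty}} \geq t\|u\|_\infty$. Then $\|\nabla^k P_{\leq j}u\|_p \lesssim 2^{jk} t^{-1+2/p}\|u\|_X$ by Bernstein (plus the harmless $P_{\leq 0}$ contribution). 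Adding the two estimates and optimizing $j$ by balancing the two powers $2^{jk} t^{-1+2/p} \sim 2^{j(k+1-2/p-N)}t^\delta$ yields $2^j = t^{(\delta+1-2/p)/(N+2/p-1)}$, and substituting back gives exactly the exponent claimed in (ii).

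For (iii), since $L^p$ with $p<2$ is not controlled by $L^2$-type information alone, I would use a physical-space dyadic decomposition to exploit the weighted bound $\|xf\|_2 \lesssim t^\delta\|u\|_X$. On the annulus $A_j = \{2^j \leq |x| < 2^{j+1}\}$, H\"older gives $\int_{A_j}|f|^p \lesssim 2^{2j(1-p/2)}\|f\|_{L^2(A_j)}^p$, and one can then bound $\|f\|_{L^2(A_j)} \leq \min(\|f\|_2,\,2^{-j}\|xf\|_2)$. Summing over $j$, splitting at the balance point $2^{j_0} = \|xf\|_2/\|f\|_2$ (the sum for $j\leq j_0$ converges because $p<2$, and the sum for $j>j_0$ because $p>1$), both sides yield the interpolation inequality
\[
\|f\|_p \lesssim \|xf\|_2^{\,2/p-1}\,\|f\|_2^{\,2-2/p}.
\]
Substituting $\|f\|_2 = \|u\|_2 \lesssim \|u\|_X$ and $\|xf\|_2 \lesssim t^\delta \|u\|_X$ gives (iii). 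The only mildly delicate step is the dyadic balancing in (iii); everywhere else the proof is routine Littlewood--Paley bookkeeping, and I expect no real obstacle beyond tracking the exponents carefully.
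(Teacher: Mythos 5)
Your proposal is correct and follows essentially the same route as the paper: a dyadic Bernstein summation for (i), a low/high frequency split with optimization over $j$ for (ii), and for (iii) the weighted interpolation inequality $\|f\|_p \lesssim \|xf\|_2^{\,2/p-1}\|f\|_2^{\,2-2/p}$, which the paper invokes in one line ("interpolating $L^p$ between weighted $L^2$ spaces") and you simply prove in detail via a physical-space dyadic decomposition. I see no gaps; your exponent bookkeeping in (ii) and the balancing at $2^{j_0}=\|xf\|_2/\|f\|_2$ in (iii) both check out.
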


\begin{proof} The proof of $(i)$ is a standard application of~(\ref{LPderivative}) and (\ref{lemmadeltaj}):
\begin{equation*}
\begin{split}
\left\| \nabla^k P_{>j} u \right\|_p & \leq \sum_{\ell>j} 2^{\ell k}\left\| P_\ell u \right\|_p \lesssim \sum_{\ell>j} 2^{\ell(-N+k+1-\frac{2}{p})} \|u\|_{H^N} \\
& \lesssim 2^{j(-N+k+1-\frac{2}{p})} \|u\|_{H^N} \lesssim 2^{j(-N+k+1-\frac{2}{p})} t^\delta \|u\|_X .
\end{split}
\end{equation*}
For $(ii)$, we estimate separately low and high frequencies:
\begin{equation*}
\begin{split} 
\|\nabla^k u\|_p & \leq \|P_{\leq j} \nabla^k u \|_p + \|P_{>j} \nabla^k u \|_p \lesssim 2^{jk} \|u\|_p + 2^{j(-N+k+1-\frac{2}{p})} t^\delta \|u\|_X \\
& \lesssim 2^{jk} t^{-1+\frac{2}{p}} \|u\|_X + 2^{j(-N+k+1-\frac{2}{p})} t^\delta \|u\|_X .
\end{split}
\end{equation*}
Optimizing the above inequality over $j$ gives the desired conclusion. 
Finally, $(iii)$ follows from interpolating $L^p$ spaces between weighted $L^2$ spaces.
\end{proof}

\subsection*{Bound for $\nabla^k e^{- it\Lambda^{1/2}} g_1$ in $L^\infty$ with $0 \leq k \leq 4$}
The idea for this estimate, as for many estimates which follow, is to use Sobolev embedding in order to make sure that Lebesgue indices are finite when applying Theorem~\ref{bilinearbound}.

Using successively Sobolev embedding, Theorem~\ref{bilinearbound} and Lemma~\ref{pbound}, one gets
\begin{equation*}
\begin{split}
\left\| \nabla^k e^{- it\Lambda^{1/2}} g_1 \right\|_\infty & \lesssim \left\| \nabla^k e^{- it\Lambda^{1/2}} g_1 \right\|_{W^{1,8}} = \left\| \nabla^{k} B_{\chi \mu}( u , u) \right\|_{W^{1,8}}  \\
& \lesssim \| \Lambda^{k+1} u \|_{W^{1,16}}  \|u\|_{16}  \lesssim \frac{1}{t} \|u\|^2_X.
\end{split}
\end{equation*}

\subsection*{Bound for $ x g_1$ in $L^2$}
In Fourier space, $x g_1$ reads
\begin{subequations}
\begin{align}
\mathcal{F}(x g_1 ) = & \partial_\xi \left[ \int e^{it\phi} \chi(\xi,\eta) \mu (\eta,\xi)  \widehat{f}(t,\eta) \widehat{f}(t,\xi-\eta)\,d\eta \right] \\
\label{beetle1} =& \int e^{it\phi} \chi(\xi,\eta) \mu (\eta,\xi) \widehat{f}(t,\eta) \partial_\xi \widehat{f}(t,\xi-\eta)\,d\eta \\
\label{beetle2} & + \int e^{it\phi} \partial_\xi\left[ \chi(\xi,\eta) \mu (\eta,\xi) \right] \widehat{f}(t,\eta) \widehat{f}(t,\xi-\eta)\,d\eta \\
\label{beetle3} & + \int e^{it\phi} t \partial_\xi \phi (\xi,\eta)\chi(\xi,\eta) \mu (\eta,\xi) \widehat{f}(t,\eta) \widehat{f}(t,\xi-\eta)\,d\eta .
\end{align}
\end{subequations}

\subsubsection*{Bound for~(\ref{beetle1}) in $L^2$} 

Using successively Sobolev embedding, Theorem~\ref{bilinearbound} and Lemma~\ref{pbound}, one gets
\begin{equation*}
\begin{split}
\left\| (\ref{beetle1}) \right\|_2 & = \left\| B_{\chi \mu} \left(  u , e^{\pm it{\Lambda^{1/2}}}x f \right) \right\|_2  \lesssim \left\| B_{\chi \mu} \left(  u , e^{\pm it{\Lambda^{1/2}}}x f \right) \right\|_{W^{1,\frac 43}}\\
& \lesssim \left\| \Lambda u \right\|_{W^{1,4}}\left\| xf \right\|_2  \lesssim  \|u\|^2_X .
\end{split}
\end{equation*}

\subsubsection*{Bound for~(\ref{beetle2}) in $L^2$}
Let us take a closer look at $\chi(\xi,\eta) \mu (\eta,\xi)$. This is a symbol in $\widetilde{\mathcal{B}}_1$, which furthermore vanishes at order $\frac{1}{2}$ in $\xi=0$. Therefore, by Lemma~\ref{derivbs}.
$$
\partial_\xi \left[ \chi(\xi,\eta) \mu (\eta,\xi) \right] = \mu_1 + \frac{1}{|\xi|^{1/2}} \mu_2 + \frac{1}{|\xi-\eta|} \mu_3\quad\mbox{with $(\mu_1,\mu_2,\mu_3) \in \widetilde{\mathcal{B}}_0 \times \widetilde{\mathcal{B}}_{1/2} \times \widetilde{\mathcal{B}}_1$}
$$
Thus
\begin{subequations}
\begin{align}
\label{ant1}
(\ref{beetle2}) = & \int e^{it\phi} \mu_1 (\eta,\xi) \widehat{f}(t,\eta) \widehat{f}(t,\xi-\eta)\,d\eta \\
\label{ant2}
& + \int e^{it\phi} \frac{1}{|\xi|^{1/2}} \mu_2 (\eta,\xi) \widehat{f}(t,\eta) \widehat{f}(t,\xi-\eta)\,d\eta \\
\label{ant3}
& + \int e^{it\phi} \frac{1}{|\xi-\eta|} \mu_3 (\eta,\xi) \widehat{f}(t,\eta) \widehat{f}(t,\xi-\eta)\,d\eta .
\end{align}
\end{subequations}
The term~(\ref{ant1}) is easily estimated, so we skip it, and consider next the term~(\ref{ant2}).
\begin{equation*}
\left\|(\ref{ant2})\right\|_2 = \left\|\frac{1}{\Lambda^{1/2}} B_{\mu_2} (u,u) \right\|_2 \lesssim \left\| B_{\mu_2} (u,u) \right\|_{4/3} \lesssim \left\|\Lambda^{1/2}u\right\|_4 \left\|u\right\|_2 \lesssim   \|u\|^2_X.
\end{equation*}
Finally, we need to estimate the term~(\ref{ant3}): by Theorem~\ref{bilinearbound} and Lemma~\ref{linearbound}
\begin{equation*}
\begin{split}
\left\|(\ref{ant3})\right\|_2 & = \left\| B_{\mu_3} \left( u , e^{it{\Lambda^{1/2}}} \frac{1}{\Lambda} f \right) \right\|_2  \lesssim \left\| \Lambda u \right\|_4 \left\| e^{it{\Lambda^{1/2}}} \frac{1}{\Lambda} f \right\|_4 \\
& \lesssim \left\| \Lambda u \right\|_4 \left\| f \right\|_{4/3} \lesssim   \|u\|^2_X .
\end{split}
\end{equation*}

\subsubsection*{Bound for~(\ref{beetle3}) in $L^2$}
Since $\partial_\xi \phi = \frac{1}{2} \frac{\xi}{|\xi|^{3/2}} \pm \frac{1}{2} \frac{\xi-\eta}{|\xi-\eta|^{3/2}}$, and keeping in mind that $\mu$ vanishes at order $1/2$ in $\xi$, the symbol appearing in~(\ref{beetle3}) can be written
$$
\partial_\xi \phi (\xi,\eta)\chi(\xi,\eta) \mu (\eta,\xi) \overset{def}{=} \mu_1(\xi,\eta) + \frac{1}{|\xi-\eta|^{1/2}} \mu_2(\xi,\eta) \quad\mbox{with $(\mu_1,\mu_2) \in \widetilde{\mathcal{B}}_{1/2} \times \widetilde{\mathcal{B}}_1$}.
$$
We simply show how to estimate the term associated to the symbol $\frac{1}{|\xi-\eta|^{1/2}} \mu_2(\xi,\eta)$, the symbol $\mu_1(\xi,\eta)$ being easier to treat.
Using successively Theorem~\ref{bilinearbound}, Lemma~\ref{linearbound} and Lemma~\ref{pbound}, one gets
\begin{equation*}
\begin{split}
\left\| \mathcal{F}  \int e^{it\phi} t \frac{1}{|\xi-\eta|^{1/2}} \mu_2 \widehat{f}(t,\eta) \widehat{f}(t,\xi-\eta)\,d\eta \right\|_2 & = t \left\|B_{\mu_2} \left( u , e^{\pm it{\Lambda^{1/2}}} \frac{1}{\Lambda^{1/2}} f \right) \right\|_2 \\
& \lesssim t \left\| \Lambda u \right\|_{\frac{1}{\delta_0}} \left\| e^{\pm it{\Lambda^{1/2}}} \frac{1}{\Lambda^{1/2}} f \right\|_{\frac{2}{1-2\delta_0}} \\
& \lesssim t \left\| \Lambda u \right\|_{\frac{1}{\delta_0}} \left\| f \right\|_{\frac{4}{3-4\delta_0}} \\
& \lesssim t t^{-1 + 2 \delta_0 + \frac{1}{N +2\delta_0- 1}\left[ 1 - 2\delta_0 + \delta \right]} t^{(\frac{1}{2}-2 \delta_0)\delta}  \|u\|^2_X\\
& \lesssim t^\delta  \|u\|^2_X .
\end{split}
\end{equation*}
where  $\delta_0 >0$ denotes a small constant. Notice that the last inequality holds since $N$ and $\delta_0$ have been picked  to be large and small enough respectively .


\section{Estimates for the weakly resonant cubic terms}

\label{sectionweak}

In this section, we derive the desired estimates in $L^\infty$ and $L^2(x^2dx)$ for terms of the form either~(\ref{butterfly2}) or~(\ref{butterfly3}) corresponding to weakly resonant phases as defined in Section~\ref{examinationcubic}. We shall denote a generic term of this type by
\begin{equation}
\label{defg2}
\widehat{g_2}(\xi,t) \overset{def}{=} \int_2^t \!\!\int \!\!\!\int e^{is\phi} \mu(\xi,\eta,\sigma) 
\widehat{f}(s,\sigma) \widehat{f}(s,\eta) \widehat{f}(s,\xi-\eta-\sigma)\,d\eta\,d\sigma\,ds ,
\end{equation}
where
$$
\phi = \phi_{+++} \quad\mbox{or}\quad\phi_{-++}\quad\mbox{or}\quad\phi_{+-+} \quad\mbox{or}\quad\phi_{++-}\quad\mbox{or}\quad \phi_{---}
$$
and
$$
\mu(\xi,\eta,\sigma) = m_i(\xi,\eta,\sigma) \quad\mbox{or}\quad \frac{m_k(\xi,\eta)}{i \phi_{\pm,\pm}(\xi,\eta)}m_j(\xi-\eta,\sigma)\quad\mbox{or}\quad \frac{m_k(\xi,\xi-\eta)}{i \phi_{\pm,\pm}(\xi,\xi-\eta)}m_j(\xi-\eta,\sigma) ,
$$
with the indices $i$ equal to $3$ or $4$, $j,k$ equal to $1$ or $2$, and $\pm$ equal to $+$ or $-$.

Of course, we adopt this lighter notation because the precise form of $\phi$ or $\mu$ will not affect the argument which follows.

\subsection*{Preliminary observations and reductions}

\label{POR}

We begin with the symbol $\mu(\xi,\eta,\sigma)$ occurring in the definition of $g_2$: regardless of its precise form.   From Section~\ref{quadsymb} and the observations at the beginning of Section~\ref{preliminary} we have

\begin{itemize}
\item It is homogeneous of degree $5/2$ in $(\xi,\eta,\sigma)$.
\item It is smooth except on $\{\xi = 0 \} \cup \{ \eta = 0 \} \cup \{ \sigma = 0\} \cup \{ \xi-\eta-\sigma = 0\} \cup \{ \xi-\eta = 0\}$.
\item It might have a singularity for $\xi-\eta =0$ of type, in the worst possible case, $|\xi-\eta|^{1/2}$.
\item It vanishes to order (at least) $1/2$ in $\xi$.
\item It belongs to the class $\mathcal{T}_{5/2}$ (see appendix~\ref{appendcub} for the definition).
\end{itemize}

\medskip

Next, define cut-off functions $\chi_1$, $\chi_2$, $\chi_3$ such that
\begin{itemize}
\item $\chi_1$, $\chi_2$, $\chi_3$ are valued in $[0,1]$ and $\chi_1+\chi_2+ \chi_3 = 1 $.
\item $\chi_1$, $\chi_2$, $\chi_3$ are homogeneous of degree $0$ and smooth outside of $(0,0,0)$.
\item $\chi_1(\xi,\eta,\sigma) = 0$ in a neighborhood of $\{ \sigma = 0 \}$.
\item $\chi_2(\xi,\eta,\sigma) = 0$ in a neighborhood of $\{ \eta = 0 \}$.
\item $\chi_3(\xi,\eta,\sigma) = 0$ in a neighborhood of $\{ \xi-\eta-\sigma = 0 \}$.
\end{itemize}
Then one can split $g_2$ as follows
\begin{equation*}
\begin{split}
\widehat{g_2}(\xi) = \sum_{k=1}^3 \int_2^t \!\!\int \!\!\!\int e^{is\phi} \chi_k(\xi,\eta,\sigma) \mu(\xi,\eta,\sigma) 
\widehat{f}(s,\sigma) \widehat{f}(s,\eta) \widehat{f}(s,\xi-\eta-\sigma)\,d\eta\,d\sigma\,ds .
\end{split}
\end{equation*}
By symmetry it suffices to consider the first summand ($k=1$) of the above right-hand side, which corresponds to the region $|\sigma| \gtrsim |\xi|,|\eta|,|\xi-\eta-\sigma|$ (the interest of that condition is that $\xi$ derivatives always hit $\widehat{f}(\xi-\eta-\sigma)$, which corresponds thus to low frequencies). Thus in the following, we shall consider that
$$
\widehat{g_2}(\xi) = \int_2^t \!\!\int \!\!\!\int e^{is\phi} \chi_1(\xi,\eta,\sigma) \mu(\xi,\eta,\sigma)
\widehat{f}(s,\eta) \widehat{f}(s,\sigma) \widehat{f}(s,\xi-\eta-\sigma)\,d\eta\,d\sigma\,ds .
$$
Since $\chi_1 \in \widetilde{\mathcal{T}}_0$ and $\mu \in \mathcal{T}_{5/2}$, we will constantly use in the estimates which follow that $\chi_1 \mu \in \widetilde{\mathcal{T}}_{5/2}$.

\bigskip

Finally, we will need the following lemma
\begin{lemma}
If $\phi$ is either $\phi_{+++}$, $\phi_{-++}$, $\phi_{+-+}$, $\phi_{++-}$, or $\phi_{---}$, then $\frac{1}{\phi}$ can be written as
\begin{equation}
\label{onephi}
\frac{1}{\phi} = \nu_0 + \frac{1}{|\xi|^{1/2}} \nu_1 + \frac{1}{|\eta|^{1/2}} \nu_2 + \frac{1}{|\sigma|^{1/2}} \nu_3 +  \frac{1}{|\xi-\eta-\sigma|^{1/2}} \nu_4,
\end{equation}
where $(\nu_0,\nu_1,\nu_2,\nu_3,\nu_4) \in \mathcal{T}_{-1/2} \times \mathcal{T}_{0} \times \mathcal{T}_{0} \times \mathcal{T}_{0} \times \mathcal{T}_{0}$.
\end{lemma}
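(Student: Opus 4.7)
My plan is to work by homogeneity: since $\phi$ is homogeneous of degree $1/2$ and each term on the right-hand side is of degree $-1/2$, it suffices to construct the decomposition on the unit sphere $\{|\xi|^2 + |\eta|^2 + |\sigma|^2 = 1\} \subset \RR^6$ and extend each $\nu_j$ by homogeneity of the prescribed degree. The local decomposition will come from a partition of unity $1 = \chi_0 + \chi_\xi + \chi_\eta + \chi_\sigma + \chi_{\xi-\eta-\sigma}$ adapted to the four singular hyperplanes $\{\xi=0\}$, $\{\eta=0\}$, $\{\sigma=0\}$, $\{\xi-\eta-\sigma=0\}$, where $\chi_0$ is supported away from all of them and each $\chi_\bullet$ is supported in a narrow neighborhood of the corresponding hyperplane with the other three variables of size comparable to $1$.

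In the region supporting $\chi_0$ the phase $\phi$ is $C^\infty$, and the weakly resonant property ensures (as reviewed in Section~\ref{examinationcubic}) that $\mathscr{T}$ is contained in the union of these four hyperplanes, so $\phi$ is bounded below there. Hence $\chi_0/\phi$ is smooth of homogeneity $-1/2$ and can be absorbed into $\nu_0 \in \mathcal{T}_{-1/2}$. Near a hyperplane, say $\{\xi = 0\}$, the expansion~\eqref{eq:addaxis} and its analogues produce $\phi = a(\xi,\eta,\sigma) + |\xi|^{1/2} b(\xi,\eta,\sigma)$, where $a,b$ are smooth functions of $(|\xi|^{1/2},\xi/|\xi|,\eta,\sigma)$ and the weakly resonant property supplies $|a| \gtrsim 1$ on the support of $\chi_\xi$. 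Rationalizing, I would write
$$
\frac{\chi_\xi}{\phi} \;=\; \chi_\xi\cdot\frac{a - |\xi|^{1/2} b}{a^2 - |\xi| b^2} \;=\; \frac{\chi_\xi\, a}{a^2 - |\xi| b^2} \;-\; \frac{1}{|\xi|^{1/2}}\cdot\frac{\chi_\xi\, |\xi|\, b}{a^2 - |\xi| b^2}.
$$
The first term is smooth of homogeneity $-1/2$ and joins $\nu_0$; the cofactor of $1/|\xi|^{1/2}$ is smooth of homogeneity $0$ and enters $\nu_1 \in \mathcal{T}_0$. Identical constructions near $\{\eta=0\}$, $\{\sigma=0\}$, $\{\xi-\eta-\sigma=0\}$ produce $\nu_2, \nu_3, \nu_4$.

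The main obstacle is $\phi_{---}$, whose resonant set $\mathscr{T}_{---}$ is nonempty and consists of the three pairwise intersections of singular hyperplanes. Near $\{\eta = \sigma = 0\}$, expansion~\eqref{ZZZ} gives $\phi_{---} = -|\eta|^{1/2} - |\sigma|^{1/2} + \mathcal{A}(\xi,\eta,\sigma)[\eta + \sigma]$; since the linear remainder is $O(|\eta|+|\sigma|)$, dominated by the leading term, one has $|\phi_{---}| \gtrsim |\eta|^{1/2} + |\sigma|^{1/2}$. I would then further partition this neighborhood into the two regions $\{|\eta| \ge |\sigma|\}$ and $\{|\sigma| \ge |\eta|\}$. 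On the first, $|\eta|^{1/2} + |\sigma|^{1/2} \sim |\eta|^{1/2}$, and writing
$$
\frac{1}{\phi_{---}} \;=\; \frac{1}{|\eta|^{1/2}} \cdot \frac{|\eta|^{1/2}}{\phi_{---}},
$$
the second factor is bounded and smooth of homogeneity $0$ in the natural variables $(|\xi|^{1/2}, |\eta|^{1/2}, |\sigma|^{1/2}, \xi/|\xi|, \eta/|\eta|, \sigma/|\sigma|)$, hence lies in $\mathcal{T}_0$ and contributes to $\nu_2$. The symmetric subregion $\{|\sigma| \ge |\eta|\}$ gives a contribution to $\nu_3$, and the two remaining corners of $\mathscr{T}_{---}$ are handled identically.

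Reassembling the local pieces via the partition of unity yields the claimed global decomposition. The delicate technical step throughout is verifying the $\mathcal{T}_s$ regularity of each cofactor where multiple singular hyperplanes come close; this is precisely what dictates the care in choosing the partition, so that each chart approaches at most one singular hyperplane (or, in the corners of $\mathscr{T}_{---}$, one symmetric pair of hyperplanes).
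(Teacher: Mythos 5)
Your proposal is correct and follows essentially the paper's intended route (the paper's entire proof is the one-line reference to \eqref{ZZZ}): away from the resonant corners the phase is bounded below and $1/\phi$ already has the allowed $\mathcal{A}\big(|\xi_i|^{1/2},\tfrac{\xi_i}{|\xi_i|},\dots\big)$ structure, while at the corners the expansion \eqref{ZZZ} yields $|\phi|\gtrsim |\eta|^{1/2}+|\sigma|^{1/2}$, from which the singular factors $|\eta|^{-1/2}$, $|\sigma|^{-1/2}$ (and their analogues at the other corners) are extracted exactly as you do. Your homogeneity reduction, the partition of unity adapted to the hyperplanes and their pairwise intersections, and the smooth splitting according to $|\eta|\gtrless|\sigma|$ are precisely the bookkeeping the paper leaves implicit.
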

\begin{proof}
Relies on~(\ref{ZZZ}).
\end{proof}
\begin{remark}
Depending on which one of $\phi_{+++}$, $\phi_{-++}$, $\phi_{+-+}$, $\phi_{++-}$, or $\phi_{---}$ is considered, one  of the four symbols $(\nu_1,\nu_2,\nu_3,\nu_4)$ can be taken equal to 0.
\end{remark}

\subsection*{Control of $e^{-it\Lambda^{1/2}} \partial_t f$}

The  normal form transformation, i.e., integrating by part with respect to time, introduces terms of the type $e^{-it\Lambda^{1/2}} \partial_t f$. This explains the importance of the following lemma.

\begin{lemma}
\label{lemmadt}
If $2\leq p < \infty$ and $0\leq k \leq 10$,
$$
\left\| \nabla^k e^{-it\Lambda^{1/2}} \partial_t f \right\|_p \lesssim t^{-2+\frac{2}{p} + \frac{k+\frac{5}{2}}{N+\frac{1}{p}-1}(1-\frac{1}{p}+\delta)} \|u\|_X^2 .
$$
\end{lemma}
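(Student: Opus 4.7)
The idea is to expand $e^{-it\Lambda^{1/2}}\partial_t f$ as a finite sum of quadratic, cubic and quartic multilinear terms in $u$, and then bound each piece using the multilinear estimates of the appendices together with Lemma~\ref{pbound}.

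Since $f=e^{it\Lambda^{1/2}}u$, we have $e^{-it\Lambda^{1/2}}\partial_t f=i\Lambda^{1/2}u+\partial_t u$. Using (3.1), the linear part of $\partial_t u=\partial_t h+i\Lambda^{1/2}\partial_t\psi$ is $\Lambda\psi-i\Lambda^{1/2}h=-i\Lambda^{1/2}u$, which cancels exactly against $i\Lambda^{1/2}u$; hence
$$
e^{-it\Lambda^{1/2}}\partial_t f \;=\; \sum_{j=1,2} B_{m_j}(u,u) \;+\; \sum_{j=3,4} T_{m_j}(u,u,u) \;+\; R,
$$
a sum of bilinear pseudo-products with the symbols $m_1,m_2$ (homogeneous of degree $3/2$), trilinear pseudo-products with the symbols $m_3,m_4$ (homogeneous of degree $5/2$), and the quartic remainder $R$. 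The distinction between $u$ and $\bar u$ is suppressed as explained in Section~\ref{decay}.

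For the quadratic piece, $\nabla^k B_{m_j}(u,u)$ is a bilinear pseudo-product with effective symbol of degree $k+3/2$, lying in the appropriate class by the expansions around the coordinate axes computed in Section~\ref{quadsymb}. Applying the bilinear bound of Appendix~\ref{appendquad} with both Lebesgue indices equal to $2p$ and derivatives split evenly, then Lemma~\ref{pbound}(ii) at exponent $2p$ to each factor gives
$$
\|\nabla^k B_{m_j}(u,u)\|_p \;\lesssim\; \|\Lambda^{(k+3/2)/2} u\|_{2p}^{\,2} \;\lesssim\; t^{-2+\frac{2}{p}+\frac{k+3/2}{N+1/p-1}(1-\frac{1}{p}+\delta)}\|u\|_X^{\,2},
$$
which is stronger than the claimed bound since $k+3/2<k+5/2$. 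For the cubic piece, $\nabla^k T_{m_j}(u,u,u)$ is trilinear with effective symbol of degree $k+5/2$; using the trilinear bound of Appendix~\ref{appendcub} with Lebesgue indices $(2p,2p,\infty)$ and derivative distribution $\bigl((k+5/2)/2,\,(k+5/2)/2,\,0\bigr)$, the first two factors produce the target $L^p$ estimate (now with $k+5/2$ in the exponent), while the last one is controlled by $\|u\|_\infty \le \|u\|_{W^{4,\infty}} \lesssim t^{-1}\|u\|_X$ coming from the $X$-norm. The resulting bound is the target multiplied by the extra factor $t^{-1}\|u\|_X$, which is absorbed as soon as $\|u\|_X$ is small and $t\ge 2$. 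Finally, the quartic remainder $R$ is controlled by the $L^p$ bounds of Appendix~\ref{ap:f}, which provide a decay of order $t^{-3+2/p}$ (up to Sobolev losses) and are therefore much stronger than the target.

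The only nontrivial technical point in this argument is the verification that the symbols $m_j$ and their derivatives genuinely belong to the pseudo-product classes on which Appendices~\ref{appendquad}--\ref{appendcub} are based; this is precisely the content of the case analysis of Section~\ref{quadsymb} (describing the vanishing and non-smoothness of the $m_j$ near the coordinate axes), so once those preparations are in place the present lemma follows by a direct, essentially bookkeeping, application of Lemma~\ref{pbound} and the multilinear bounds of the appendices.
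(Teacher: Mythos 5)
Your overall route is exactly the paper's: after the cancellation of the linear part, expand $e^{-it\Lambda^{1/2}}\partial_t f$ into bilinear terms with symbols $m_1,m_2$, trilinear terms with symbols $m_3,m_4$, and the quartic remainder $R$; then estimate each piece with the pseudo-product bounds of the appendices together with Lemma~\ref{pbound}, and invoke Proposition~\ref{propR} for $R$. Two of your H\"older/derivative choices, however, are not supported by the tools you cite. First, in the quadratic term you split the derivatives evenly, claiming $\|\nabla^k B_{m_j}(u,u)\|_p\lesssim\|\Lambda^{(k+3/2)/2}u\|_{2p}^2$. To deduce this from Theorem~\ref{bilinearbound} you would need the symbol $\xi^k m_j(\xi,\eta)\,|\eta|^{-(k+3/2)/2}|\xi-\eta|^{-(k+3/2)/2}$ to lie in $\mathcal{B}_0$; but near $\xi-\eta=0$ the symbols $m_1,m_2$ vanish only to order $2$ and $1/2$ respectively, so this quotient has a genuine negative-power singularity (already for $k=0$ in the case of $m_2$), which the class $\mathcal{B}_0$ does not tolerate. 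The correct bookkeeping, and the paper's, is to use the paraproduct-type support reduction and Theorem~\ref{bilinearbound}(ii), putting all derivatives on the dominant-frequency factor, i.e.\ $\|\Lambda^{k+3/2}u\|_{2p}\|u\|_{2p}$, which gives the same final exponent. Second, in the cubic term you take Lebesgue exponents $(2p,2p,\infty)$ and estimate one factor in $L^\infty$; Theorem~\ref{trilinearbound} requires all exponents to be finite, and the remark following Theorem~\ref{bilinearbound} explicitly warns that the endpoint $\infty$ is false for these operators because Riesz transforms are not bounded on $L^\infty$. The paper instead takes the exponents $(3p,3p,3p)$, all finite, obtaining $t^{-3+\frac{2}{p}+\frac{k+5/2}{N+\frac{2}{3p}-1}(1-\frac{2}{3p}+\delta)}\|u\|_X^3$, which is more than enough. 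Both defects are repaired by these trivial substitutions, after which your argument coincides with the paper's proof.
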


\begin{proof}
Differentiating~(\ref{eqfourier}) with respect to $t$ and applying $e^{-it\Lambda^{1/2}}$ gives that $e^{-it\Lambda^{1/2}} \partial_t f$ is a sum of terms of the type
\begin{subequations}
\begin{align}
\label{fox1}
& \int  m_j(\xi,\eta) \widehat{u}(t,\eta) \widehat{u}(t,\xi-\eta)\,d\eta \quad j =1,2\\
\label{fox2}
& \int \!\!\!\int m_j(\xi,\eta,\sigma) \widehat{u}(t,\eta) \widehat{u}(t,\sigma) \widehat{u}(t,\xi-\eta-\sigma)\,d\eta\,d\sigma  \quad j =3,4.
\end{align}
\end{subequations}
plus remainder terms. With the usual justifications,
\begin{equation*}
\begin{split}
& \left\|\nabla^k (\ref{fox1})\right\|_p \lesssim \left\| \Lambda^{k+3/2} u \right\|_{2p} \left\| u \right\|_{2p} \lesssim t^{-2+\frac{2}{p} + \frac{k+\frac{3}{2}}{N+\frac{1}{p}-1}(1-\frac{1}{p}+\delta)} \|u\|_X^2 \\
& \left\|\nabla^k (\ref{fox2})\right\|_p \lesssim \left\| \Lambda^{k+5/2} u \right\|_{3p} \left\| u \right\|_{3p} \left\| u \right\|_{3p}  \lesssim t^{-3+\frac{2}{p} + \frac{k+\frac{5}{2}}{N+\frac{2}{3p}-1}(1-\frac{2}{3p}+\delta)} \| u \|_X^3 , \\
\end{split}
\end{equation*}
and the estimate follows by using Proposition~\ref{propR} for the remainder terms.
\end{proof}

\subsection*{Bound for $\nabla^k e^{-it\Lambda^{1/2}} g_2$ in $L^\infty$, for $0\leq k \leq 4$}

Integrating by parts in $s$ in~(\ref{defg2}) using the relation $\frac{1}{i\phi} \partial_s e^{i s \phi} = e^{is\phi}$ gives
\begin{equation*}
\begin{split}
\widehat{g_2}(t,\xi) = &\left. \int \!\!\!\int e^{is\phi} \frac{1}{i\phi} \chi_1(\xi,\eta,\sigma) \mu(\xi,\eta,\sigma) 
\widehat{f}(s,\sigma) \widehat{f}(s,\eta) \widehat{f}(s,\xi-\eta-\sigma)\,d\eta\,d\sigma\right]_2^t \\
& - \int_2^t \!\!\int \!\!\!\int e^{it\phi} \frac{1}{i\phi} \chi_1(\xi,\eta,\sigma) \mu(\xi,\eta,\sigma) 
\partial_s \left[ \widehat{f}(s,\sigma) \widehat{f}(s,\eta) \widehat{f}(s,\xi-\eta-\sigma) \right] \,d\eta\,d\sigma\,ds .
\end{split}
\end{equation*}
The boundary term at $s=2$ is determined by the initial  data and is easy to estimate.
Next, replace $\frac{1}{i \phi}$ by its decomposition on the right-hand side of~(\ref{onephi}). The term $\nu_0$ is harmless of course, and so is the term $\frac{1}{|\xi|^{1/2}} \nu_1$, due to the vanishing of $\mu$ in $\xi$ at order $1/2$. The three remaining terms can be treated in essentially the same way, thus for the sake of illustration we only retain the term $\frac{1}{|\eta|^{1/2}} \nu_2(\xi,\eta,\sigma)$. Furthermore, if one distributes the $\partial_s$ derivative occurring in the second summand of the above right-hand side, it suffices 
to consider one of the three resulting terms; we choose to consider the case where $\partial_s$ hits $\widehat{f}(s,\sigma)$.
Thus $\mathcal{F} \nabla^k e^{-it\Lambda^{1/2}} g_2$ can be written as a sum of terms of the type
\begin{subequations}
\begin{align}
\label{bee1}
& \int \!\!\!\int \xi^k \frac{1}{|\eta|^{1/2}} \nu_2 \chi_1 \mu \widehat{u}(t,\sigma) \widehat{u}(t,\eta) \widehat{u}(t,\xi-\eta-\sigma)\,d\eta\,d\sigma \\
\label{bee2}
& \int_2^t \!\!\int \!\!\!\int \xi^k  e^{-i(t-s) |\xi|^{1/2}} \frac{1}{|\eta|^{1/2}}\nu_2 \chi_1 \mu e^{\pm is|\sigma|^{1/2}} \partial_s \widehat{f}(s,\sigma) \widehat{u}(s,\eta) \widehat{u}(s,\xi-\eta-\sigma) \,d\eta\,d\sigma\,ds .
\end{align}
\end{subequations}
Using successively Sobolev embedding, Theorem~\ref{trilinearbound} and Lemma~\ref{linearbound}, we get
\begin{equation*}
\begin{split}
\left\|\nabla^k \mathcal{F}^{-1} (\ref{bee1}) \right\|_\infty & = \left\|  \nabla^k B_{\nu_2 \mu \chi_1} \left( u,\frac{1}{\Lambda^{1/2}}u,u \right) \right\|_\infty \lesssim  \left\|  \nabla^k B_{\nu_2 \mu \chi_1} \left( u,\frac{1}{\Lambda^{1/2}}u,u \right) \right\|_{W^{1,8}}  \\
& \lesssim \left\|\Lambda^{5/2+k} u\right\|_{W^{1,24}} \left\|\frac{1}{\Lambda^{1/2}} u\right\|_{24} \left\|u\right\|_{24} 
\lesssim \left\|\Lambda^{5/2+k} u\right\|_{W^{1,24}} \left\| u\right\|_{24/7} \left\|u\right\|_{24}  \lesssim \frac{1}{t} \|u\|_X^3 .
\end{split}
\end{equation*}
In order to estimate (\ref{bee2}), we split it into two pieces $\int_2^{t-1} + \int_{t-1}^t $. 
The first summand is estimated using the dispersive estimate of Lemma~\ref{linearbound} and point $(ii)$ of Theorem~\ref{trilinearbound}:
\begin{equation*}
\begin{split}
& \left\|\mathcal{F}^{-1} \int_2^{t-1}\int \!\!\!\int \xi^k  e^{-i(t-s) |\xi|^{1/2}} \frac{1}{|\eta|^{1/2}}\nu_2 \chi_1 \mu e^{\pm is|\sigma|^{1/2}} \partial_s \widehat{f}(s,\sigma) \widehat{u}(s,\eta) \widehat{u}(s,\xi-\eta-\sigma) \,d\eta\,d\sigma \,ds \right\|_\infty \\
&\lesssim \int_2^{t-1} \frac{ds}{t-s} \left\| \nabla^k  B_{\nu_2 \mu \chi_1} \left(e^{\pm is\Lambda^{1/2}} \partial_s f ,  \frac{1}{\Lambda^{\frac12}}u , u \right) \right\|_{B^2_{1,\infty}} \\
&\lesssim \int_2^{t-1} \frac{ds}{t-s}  \left\|\Lambda^{k+5/2} \partial_s f \right\|_{H^2} \left\|\frac1{\Lambda^{\frac12}}u \right\|_{4} \left\|u\right\|_{4}  \lesssim
 \int_2^{t-1} \frac{1}{t-s}  \left\|\Lambda^{k+5/2} \partial_s f \right\|_{H^2} \left\| u \right\|_{2} \left\|u\right\|_{4} ds
\lesssim \frac{1}{t} \|u\|_X^3 .
\end{split}
\end{equation*}
The second summand 
is estimated via Sobolev embedding:
\begin{equation}
\label{walrus}
\begin{split}
& \left\|\mathcal{F}^{-1} \int_{t-1}^t\int \!\!\!\int \xi^k  e^{-i(t-s) |\xi|^{1/2}} \frac{1}{|\eta|^{1/2}}\nu_2 \chi_1 \mu e^{\pm is|\sigma|^{1/2}} \partial_s \widehat{f}(s,\sigma) \widehat{u}(s,\eta) \widehat{u}(s,\xi-\eta-\sigma) \,d\eta\,d\sigma \,ds \right\|_\infty \\
&\qquad\lesssim \int_{t-1}^t \left\| \nabla^k B_{\nu_2 \mu \chi_1} \left(e^{\pm is\Lambda^{1/2}} \partial_s f , \frac{1}{\Lambda^{1/2}} u , u \right) \right\|_{H^2}\, ds\\
&\qquad\lesssim \int_{t-1}^t \left\|\Lambda^{k+2} e^{\pm is\Lambda^{1/2}} \partial_s f\right\|_{W^{2,4}} \left\| \frac{1}{\Lambda^{1/2}} u \right\|_8 \left\| u \right\|_8
\,ds \\
&\qquad\lesssim \int_{t-1}^t \left\|\Lambda^{k+2} e^{\pm is\Lambda^{1/2}} \partial_s f\right\|_{W^{2,4}} \left\| u \right\|_{8/3} \left\| u \right\|_8 
\,ds  \lesssim \frac{1}{t} \|u\|_X^3 .
\end{split}
\end{equation}

\subsection*{Bound for $x g_2$ in $L^2$.}

In Fourier space, $x g_2$ reads
\begin{subequations}
\begin{align}
\mathcal{F} (x g_2) = & \partial_\xi \left[ \int_2^t \!\!\int \!\!\!\int e^{is\phi} \chi_1 \mu 
\widehat{f}(s,\sigma) \widehat{f}(s,\eta) \widehat{f}(s,\xi-\eta-\sigma)\,d\eta\,d\sigma\,ds \right] \\
\label{bull2}
= & \int_2^t \!\!\int \!\!\!\int e^{is\phi} \chi_1 \mu 
\widehat{f}(s,\sigma) \widehat{f}(s,\eta) \partial_\xi \widehat{f}(s,\xi-\eta-\sigma)\,d\eta\,d\sigma\,ds \\
\label{bull3}
& + \int_2^t \!\!\int \!\!\!\int e^{is\phi} \partial_\xi (\chi_1 \mu) 
\widehat{f}(s,\sigma) \widehat{f}(s,\eta)  \widehat{f}(s,\xi-\eta-\sigma)\,d\eta\,d\sigma\,ds \\
\label{bull4}
& + \int_2^t \!\!\int \!\!\!\int e^{is\phi} s (\partial_\xi \phi) \chi_1 \mu
\widehat{f}(s,\sigma) \widehat{f}(s,\eta)  \widehat{f}(s,\xi-\eta-\sigma)\,d\eta\,d\sigma\,ds .
\end{align}
\end{subequations}

\subsubsection*{Bound for~(\ref{bull2}) in $L^2$} Using successively Sobolev embedding, Theorem~\ref{trilinearbound} and Lemma~\ref{pbound}, one gets
\begin{equation*}
\begin{split}
\left\| (\ref{bull2}) \right\|_2 & = \left\|\int_2^t  \nabla^k B_{\chi_1 \mu} \left(u,  u , e^{\pm it{\Lambda^{1/2}}}x f \right)\,ds \right\|_2  \lesssim \int_2^t \left\| \nabla^k  B_{\chi_1 \mu} \left( u, u , e^{\pm it{\Lambda^{1/2}}}x f \right) \right\|_{W^{1,4/3}} ds \\
 &\lesssim \int_2^t  \left\| \Lambda^{k+5/2} u \right\|_{W^{1,8}} \left\| u \right\|_{8} \left\| xf \right\|_2 ds \lesssim \|u\|_X^3 .
\end{split}
\end{equation*}

\subsubsection*{Bound for~(\ref{bull3}) in $L^2$} Let us look more closely at the symbol $\chi_1 \mu$. It belongs to $\mathcal{T}^{5/2}$, but it is a bit smoother than general symbols of this class: namely, it has (in the worst case) a singularity of type $|\xi-\eta|^{1/2}$ at $\xi-\eta = 0$, it vanishes at order $1/2$ in $\xi$, and it is smooth for $\xi-\sigma =0$ and $\eta + \sigma =0$. Combining these observations with Lemma~\ref{derivts}, we deduce that $\partial_\xi(\chi_1 \mu)$ can be written
$$
\partial_\xi \left[ (\chi_1 \mu)(\xi,\eta,\sigma) \right] = \mu^1 + \frac{1}{|\xi|^{1/2}} \mu^2 + \frac{1}{|\xi-\eta|^{1/2}} \mu^3 + \frac{1}{|\xi-\eta-\sigma|} \mu^4
$$
with $(\mu_1,\mu_2,\mu_3,\mu_4) \in \widetilde{\mathcal{T}}_{3/2} \times  \widetilde{\mathcal{T}}_{2} \times  \widetilde{\mathcal{T}}_{2} \times \widetilde{\mathcal{T}}_{5/2}$. Thus
\begin{subequations}
\begin{align}
\label{baldeagle0}
(\ref{bull3}) & = \int_2^t \!\!\int \!\!\!\int e^{is\phi} \mu^1
\widehat{f}(s,\sigma) \widehat{f}(s,\eta)  \widehat{f}(s,\xi-\eta-\sigma)\,d\eta\,d\sigma\,ds \\
\label{baldeagle1}
& + \int_2^t \!\!\int \!\!\!\int e^{is\phi} \frac{1}{|\xi|^{1/2}} \mu^2
\widehat{f}(s,\sigma) \widehat{f}(s,\eta)  \widehat{f}(s,\xi-\eta-\sigma)\,d\eta\,d\sigma\,ds \\
\label{baldeagle2}
& + \int_2^t \!\!\int \!\!\!\int e^{is\phi} \frac{1}{|\xi-\eta|^{1/2}} \mu^3
\widehat{f}(s,\sigma) \widehat{f}(s,\eta)  \widehat{f}(s,\xi-\eta-\sigma)\,d\eta\,d\sigma\,ds \\
\label{baldeagle3}
& + \int_2^t \!\!\int \!\!\!\int e^{is\phi} \frac{1}{|\xi-\eta-\sigma|} \mu^4
\widehat{f}(s,\sigma) \widehat{f}(s,\eta)  \widehat{f}(s,\xi-\eta-\sigma)\,d\eta\,d\sigma\,ds .
\end{align}
\end{subequations}
The term~(\ref{baldeagle0}) is easily estimated, thus we skip it, and focus first on~(\ref{baldeagle1}) and~(\ref{baldeagle2}). These two terms can be estimated in essentially the same way, except that fractional integration is handled for the former with Lemma~\ref{linearbound} and for the latter with Proposition~\ref{fifs}. We simply show how to deal with~(\ref{baldeagle2}): using Proposition~\ref{fifs},
\begin{equation*}
\begin{split}
\left\| (\ref{baldeagle2}) \right\|_2 & \leq \int_2^t \left\|B_{\frac{1}{|\xi-\eta|^{1/2}} \mu^3}(u,u,u) \right\|_2 \,ds  \lesssim \int_2^t \left\|\Lambda^{2} u \right\|_4 \left\| u \right\|_4 \left\| u \right\|_4 \,ds  \lesssim \|u\|_X^3 .
\end{split}
\end{equation*}
Finally, the term~(\ref{baldeagle3}) is estimated in a very similar way to~(\ref{ant3}), thus we skip it.

\subsubsection*{Bound for~(\ref{bull4}) in $L^2$}
First compute $\displaystyle \partial_\xi \phi = \frac{1}{2} \frac{\xi}{|\xi|^{3/2}} \pm \frac{1}{2} \frac{\xi-\eta-\sigma}{|\xi-\eta-\sigma|^{3/2}}$. Therefore
\begin{subequations}
\begin{align}
\label{maple1}
(\ref{bull4}) = & \int_2^t \!\!\int \!\!\!\int e^{is\phi} s \frac{1}{2} \frac{\xi}{|\xi|^{3/2}} \chi_1 \mu
\widehat{f}(s,\sigma) \widehat{f}(s,\eta)  \widehat{f}(s,\xi-\eta-\sigma)\,d\eta\,d\sigma\,ds \\
\label{maple2}
& - \int_2^t \!\!\int \!\!\!\int e^{is\phi} s \frac{1}{2} \frac{\xi-\eta-\sigma}{|\xi-\eta-\sigma|^{3/2}} \chi_1 \mu
\widehat{f}(s,\sigma) \widehat{f}(s,\eta)  \widehat{f}(s,\xi-\eta-\sigma)\,d\eta\,d\sigma\,ds.
\end{align}
\end{subequations}
Observe that in the term~(\ref{maple1}) the singularity $\frac{1}{2} \frac{\xi}{|\xi|^{3/2}}$ is cancelled by the vanishing of the symbol $\chi_1 \mu$ in $\xi$. In other words, the symbol $\frac{1}{2} \frac{\xi}{|\xi|^{3/2}} \chi_1 \mu$ belongs to $\widetilde{\mathcal{T}}_2$. With the help of Theorem~\ref{bilinearbound}, this makes the estimate of~(\ref{maple1}) straightforward:
\begin{equation*}
\begin{split}
\left\| (\ref{maple1}) \right\|_2 & \lesssim \int_2^t s \left\| B_{\frac{\xi}{|\xi|^{3/2}} \chi_1 \mu} (u,u,u) \right\|_2\,ds  \lesssim \int_2^t s \left\|\Lambda^{2} u \right\|_6 \left\| u \right\|_6 \left\| u \right\|_6 \,ds \\
& \lesssim \|u\|_X^3 \int_2^t s s^{-\frac{2}{3} + \frac{2}{N-\frac{2}{3}}\left(\delta + \frac{2}{3} \right)} s^{-\frac{2}{3}} s^{-\frac{2}{3}} \,ds   \lesssim t^\delta \|u\|_X^3 
\end{split}
\end{equation*}
since $N$ has been taken big enough.

The estimate of~(\ref{maple2}) is a little more technical, and uses fractional integration (Lemma~\ref{linearbound}): $\delta_0$ standing for a small constant,
\begin{equation*}
\begin{split}
\left\| (\ref{maple2}) \right\|_2 & \lesssim \int_2^t s \left\| B_{
 \chi_1 \mu} \left(u,u, \frac{1}{\Lambda^{1/2}} u \right) \right\|_2\,ds \\
&\lesssim \int_2^t s \left\|\Lambda^{5/2} u \right\|_{\frac{1}{\delta_0}} \left\| u \right\|_{\frac{1}{\delta_0}} \left\|e^{is \Lambda^{1/2}} \frac{1}{\Lambda^{1/2}} f \right\|_{\frac{2}{1-4\delta_0}} \,ds\lesssim \int_2^t s \left\|\Lambda^{5/2} u \right\|_{\frac{1}{\delta_0}} \left\| u \right\|_{\frac{1}{\delta_0}} \left\|f \right\|_{\frac{4}{3-8\delta_0}} \,ds\\
& \lesssim \|u\|_X^3 \int_2^t s s^{-1+2\delta_0+\frac{5/2}{N+2\delta_0-1}(\delta - 2\delta_0 + 1)} s^{-1+2\delta_0} s^{\delta\left(\frac{1}{2}-4\delta_0\right)}  \lesssim \|u\|_X^3 t^\delta
\end{split}
\end{equation*}
since $N$ has been chosen big enough, and $\delta_0$ small enough.


\section{Estimates for the strongly resonant cubic terms}

\label{sectionstrong}

In this section, we derive the desired estimates in $L^\infty$ and $L^2(x^2dx)$ for terms of the form either~(\ref{butterfly2}) or~(\ref{butterfly3}) corresponding to strongly resonant phases as defined in Section~\ref{examinationcubic}. 
The three strongly resonant phases are identical up to a permutation of the Fourier variables. Thus, making a change of variables in the $\eta,\sigma$ integral if needed, we shall only consider the phase $\phi_{--+}$.
The generic term we consider reads
\begin{equation}
\widehat{g_3}(\xi,t) \overset{def}{=} \int_2^t \!\!\int \!\!\!\int e^{is\phi} \mu(\xi,\eta) 
\widehat{f}(s,\sigma) \widehat{f}(s,\eta) \widehat{f}(s,\xi-\eta-\sigma)\,d\eta\,d\sigma\,ds ,
\end{equation}
where
$
\phi = \phi_{--+}
$
and
$$
\mu(\xi,\eta) = m_i(\xi,\eta,\sigma) \quad\mbox{or}\quad \frac{m_k(\xi,\eta)}{i \phi_{\pm,\pm}(\xi,\eta)}m_j(\xi-\eta,\sigma)\quad\mbox{or}\quad \frac{m_k(\xi,\xi-\eta)}{i \phi_{\pm,\pm}(\xi,\xi-\eta)}m_j(\xi-\eta,\sigma) ,
$$
with $i$ equal to $3$ or $4$, $j,k$ equal to $1$ or $2$, and $\pm$ equal to $+$ or $-$.

\subsection*{Preliminary observations and reductions}

Proceeding as in Section~\ref{POR}, and keeping the same notations, matters reduce to
$$
\widehat{g_3}(t,\xi) = \int_2^t \!\!\int \!\!\!\int e^{is\phi_{\pm,\pm,\pm}} \chi_1(\xi,\eta,\sigma) \mu(\xi,\eta,\sigma)
\widehat{f}(s,\eta) \widehat{f}(s,\sigma) \widehat{f}(s,\xi-\eta-\sigma)\,d\eta \, d\sigma \,ds ,
$$
where $\chi_1 \mu \in \widetilde{\mathcal{T}}_{5/2}$.

\subsection*{Bound for $x g_3$ in $L^2$.}
In Fourier space, $x  g_3$ reads
\begin{subequations}
\begin{align}
\label{wolf2}
\mathcal{F} (x  g_3) =  & \int_2^t \!\!\int \!\!\!\int e^{is\phi} \chi_1 \mu 
\widehat{f}(s,\sigma) \widehat{f}(s,\eta) \partial_\xi \widehat{f}(s,\xi-\eta-\sigma)\,d\eta\,d\sigma\,ds \\
\label{wolf3}
& + \int_2^t \!\!\int \!\!\!\int e^{is\phi} \partial_\xi (\chi_1 \mu) 
\widehat{f}(s,\sigma) \widehat{f}(s,\eta)  \widehat{f}(s,\xi-\eta-\sigma)\,d\eta\,d\sigma\,ds \\
\label{wolf4}
& + \int_2^t \!\!\int \!\!\!\int e^{is\phi} s (\partial_\xi \phi) \chi_1 \mu
\widehat{f}(s,\sigma) \widehat{f}(s,\eta)  \widehat{f}(s,\xi-\eta-\sigma)\,d\eta\,d\sigma\,ds .
\end{align}
\end{subequations}
The terms~(\ref{wolf2}),  (\ref{wolf3}) and (\ref{wolf4}) can be estimated 
exactly as~(\ref{bull2}),  (\ref{bull3}) and (\ref{bull4}) in Section~\ref{sectionweak}. This gives, respectively, bounds of $O(1)$, $O(1)$, and $O(t^\delta)$ as $t\rightarrow \infty$, which yields the desired a priori estimate for the norm of $g_3$ in $L^2(x^2dx)$. To derive the $L^\infty$ decay of $e^{-it\Lambda^{1/2}} g_3$ however, it will be convenient to prove first that the weighted norm of $g_3$ in $L^2(x^2dx)$ is bounded as $t$ goes to infinity.

\begin{claim}
\label{clacla}
The following bound holds: $\displaystyle \left\|(\ref{wolf4}) \right\|_2 \lesssim \|u\|_X^3$. As a consequence, $\displaystyle \left\| x g_3 \right\|_2 \lesssim \|u\|_X^3$.
\end{claim}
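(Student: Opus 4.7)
The plan is to remove the dangerous factor of $s$ in~(\ref{wolf4}) by integration by parts in $\eta$ and $\sigma$, using identity~(\ref{cat}) as the key tool near the space--time resonant set $\mathscr{R}_{--+}=\{\xi=\eta=\sigma\}$. Fix a homogeneous smooth cutoff $\psi(\xi,\eta,\sigma)$ equal to $1$ in a small conical neighbourhood of $\mathscr{R}_{--+}$ and vanishing outside a slightly larger neighbourhood on which the expansion~(\ref{cat}) is valid, and write $\chi_1\mu = \psi\chi_1\mu + (1-\psi)\chi_1\mu$. This splits~(\ref{wolf4}) into an on-resonance piece and an off-resonance piece, to be treated separately.

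On the support of $1-\psi$, at least one of $\partial_\eta\phi$, $\partial_\sigma\phi$ is bounded below on the sphere (up to a further finite partition), so one can integrate by parts via $\frac{1}{is\,\partial_\eta\phi}\partial_\eta e^{is\phi} = e^{is\phi}$ or the analogue in $\sigma$. This produces a $1/s$ factor that cancels the $s$ prefactor, and distributes a derivative onto either one of the $\widehat{f}$'s---yielding an $\widehat{xf}$ factor controlled by $\|xf\|_2 \lesssim t^\delta \|u\|_X$---or onto the symbol $(1-\psi)\chi_1\mu\,\partial_\xi\phi/\partial_\eta\phi$, which remains in a trilinear pseudo-product class since $\partial_\eta\phi$ is bounded below on its support. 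The resulting expression is then of the same type as~(\ref{bull2})--(\ref{bull3}) and is bounded by the same arguments.

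On the support of $\psi$, substitute the identity
\[
\partial_\xi\phi_{--+}= \mathcal{A}(\xi,\eta,\sigma)[\partial_\eta\phi_{--+},\,\partial_\sigma\phi_{--+}]
\]
from~(\ref{cat}) into the factor $\partial_\xi\phi$, and use $s\,\partial_\eta\phi\,e^{is\phi} = \frac{1}{i}\partial_\eta e^{is\phi}$ (and its analogue in $\sigma$) to rewrite $s\,\partial_\xi\phi\,\psi\chi_1\mu\,e^{is\phi}$ as a sum of $\eta$- and $\sigma$-derivatives of $e^{is\phi}$ with smooth coefficients of the form $\psi\chi_1\mu\mathcal{A}_k$. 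Integrating by parts in $\eta$ or $\sigma$ then produces terms in which the derivative hits (a) one of the three $\widehat f$ factors, yielding an $\widehat{xf}$, or (b) the cutoff $\psi$, leaving a contribution supported away from $\mathscr{R}_{--+}$ which reduces to the off-resonance case, or (c) the amplitude $\psi\chi_1\mu\mathcal{A}_k$, giving a new symbol which, by Lemma~\ref{derivts} and the smoothness of $\mathcal{A}_k$ and $\psi$ on $\operatorname{supp}\psi$, still lies in a class to which Theorem~\ref{trilinearbound} applies. In every case the $s$ factor is gone and the resulting $s$-integrand is integrable uniformly in $t\geq 2$ thanks to the dispersive decay of $\|u\|_p$ for $p<\infty$ (Lemma~\ref{pbound}), giving $\|(\ref{wolf4})\|_2 \lesssim \|u\|_X^3$.

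The consequence $\|xg_3\|_2 \lesssim \|u\|_X^3$ is then immediate, since~(\ref{wolf2}) and~(\ref{wolf3}) are estimated verbatim as~(\ref{bull2}) and~(\ref{bull3}) in Section~\ref{sectionweak}, producing $O(1)$ bounds. The main technical obstacle lies in verifying that the new symbols produced by the $\eta,\sigma$ integrations by parts---e.g.\ $(1-\psi)\chi_1\mu\,\partial_\xi\phi/\partial_\eta\phi$ off-resonance and $\partial_\eta(\psi\chi_1\mu\mathcal{A}_k)$ on-resonance---remain in one of the trilinear pseudo-product classes of Appendix~\ref{appendcub}. The crucial facts enabling this are the vanishing of $\mu$ at order $1/2$ in $\xi$, which neutralises the $\xi$-singularity of $\partial_\xi\phi = \tfrac12\xi/|\xi|^{3/2} + \tfrac12(\xi-\eta-\sigma)/|\xi-\eta-\sigma|^{3/2}$; the low-frequency expansions of $\mu$ at the coordinate axes recorded in Section~\ref{decay}; and the smoothness of $\mathcal{A}_k$ and $\psi$ on $\operatorname{supp}\psi$, which keeps us away from those axes in the delicate on-resonance region.
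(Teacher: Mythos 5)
Your overall strategy for the resonance structure is the right one and matches the spirit of the paper's treatment of~(\ref{elk2}): partition a conical neighbourhood of the space--time resonant set, integrate by parts in $\eta,\sigma$ away from it, and use the identity~(\ref{cat}) to trade $\partial_\xi\phi$ for $\partial_\eta\phi,\partial_\sigma\phi$ near it. But there is a genuine gap: you have omitted the high/low frequency splitting in $s$ (the cutoff $\Theta(\sigma/s^{\delta_0})$ producing~(\ref{elk1}) and~(\ref{elk2})), and without it the estimates after integration by parts do not close. When the $\partial_\sigma$ (or $\partial_\eta$) derivative hits an $\widehat f$, the $X$ norm only controls $\|xf\|_2\lesssim t^\delta\|u\|_X$, with no derivatives and no better Lebesgue exponent. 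Two problems then arise simultaneously. First, the symbols in play are of degree $5/2$ (in the class $\widetilde{\mathcal{T}}_{5/2}$ after the $\chi_1$ localization), and Theorem~\ref{trilinearbound} places $\Lambda^{k+s}$ precisely on the dominant-frequency factor, which in the region selected by $\chi_1$ is the $\sigma$-factor --- i.e.\ exactly the factor that has become $e^{\pm is\Lambda^{1/2}}xf$; the quantity $\|\Lambda^{5/2}xf\|_2$ is not controlled by $\|u\|_X$. Second, even ignoring that, H\"older forces the output exponent: with $xf$ in $L^2$ the other two factors would need $\frac1{p_2}+\frac1{p_3}=0$, which the trilinear theorem forbids, and any admissible redistribution of exponents yields at best a total decay of order $s^{\delta-1}$ for the integrand, so the $ds$-integral grows like $t^\delta$ --- which is exactly the bound you already have from treating~(\ref{wolf4}) like~(\ref{bull4}), not the uniform bound claimed.

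The paper's proof repairs both defects with the decomposition~(\ref{elk1})--(\ref{elk2}). On the low-frequency piece, the Fourier support in a ball of radius $\sim s^{\delta_0}$ lets one (a) pay only $s^{5\delta_0/2}$ for $\Lambda^{5/2}$ acting on $P_{<\delta_0\log s}e^{it\Lambda^{1/2}}(xf)$, and (b) use Bernstein~(\ref{lemmadeltaj}) to come back from $L^{8/5}$ to $L^2$ at cost $s^{\delta_0/4}$, which frees the remaining two factors to be placed in $L^{16}$ with decay $s^{-7/8}$ each; the total exponent $\delta+\tfrac{\delta_0}{4}+\tfrac{5\delta_0}{2}-\tfrac74$ is then integrable (see~(\ref{nad})). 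On the high-frequency piece~(\ref{elk1}), the weight is not used at all: the factor $P_{\geq\delta_0\log s}\Lambda^{5/2}u$ gains $s^{\delta_0(-N+13/4)}$ from the $H^N$ component of the norm (Lemma~\ref{pbound}(i)), which absorbs the factor $s$ under the condition~(\ref{conditiondN}). Your sketch, as written, asserts integrability ``thanks to the dispersive decay of $\|u\|_p$ for $p<\infty$,'' but the exponent count above shows this is false without the frequency localization; this is the missing idea, and it is the heart of why the claim improves the earlier $O(t^\delta)$ bound to $O(1)$.
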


In order to estimate~(\ref{wolf4}) without the time growth factor,
 the first step is to distinguish between high and low frequencies. Denoting $\delta_0$ for a small constant (whose precise value will be fixed later on), we split~(\ref{wolf4}) as follows:
\begin{subequations}
\begin{align}
\label{elk1}
(\ref{wolf4}) = & \int_2^t \!\!\int \!\!\!\int e^{is\phi} s (\partial_\xi \phi) \chi_1 \mu
\left[ 1 - \Theta \left( \frac{\sigma}{s^{\delta_0}} \right) \right] \widehat{f}(s,\sigma) \widehat{f}(s,\eta)  \widehat{f}(s,\xi-\eta-\sigma)\,d\eta\,d\sigma\,ds\\
\label{elk2}
& + \int_2^t \!\!\int \!\!\!\int e^{is\phi} s (\partial_\xi \phi) \chi_1 \mu
\Theta \left( \frac{\sigma}{s^{\delta_0}} \right) \widehat{f}(s,\sigma) \widehat{f}(s,\eta)  \widehat{f}(s,\xi-\eta-\sigma)\,d\eta\,d\sigma\,ds ,
\end{align}
\end{subequations}
where $\Theta$ is defined in Section~\ref{appendixlin}. The point of such a decomposition is of course that there only appears in~(\ref{elk2}) frequencies (essentially) smaller than $s^{\delta_0}$, since $\operatorname{Supp}\left( \chi_1 (\xi,\eta,\sigma) \Theta \left( \frac{\sigma}{s^\delta_0} \right) \right) \subset \{ |\xi|,|\eta|,|\sigma| \lesssim s^{\delta_0} \}$.

\subsubsection*{Bound for~(\ref{elk1})}
Due to the control in $H^{N}$, it will be easy to estimate~(\ref{elk1}). First recall that $\partial_\xi \phi = \frac{\xi}{2|\xi|^{3/2}} + \frac{\xi-\eta-\sigma}{2|\xi-\eta-\sigma|^{3/2}}$. The singularity in $\xi $ is canceled by $\mu$, thus we can forget about it, replace $\partial_\xi \phi$ by $\frac{\xi-\eta-\sigma}{2|\xi-\eta-\sigma|^{3/2}}$ in~(\ref{elk1}), and estimate with the help of Theorem~\ref{trilinearbound} and Lemma~\ref{pbound}
\begin{equation*}
\begin{split}
& \left\| \int_2^t \!\!\int \!\!\!\int e^{is\phi} s \left(\frac{\xi-\eta-\sigma}{|\xi-\eta-\sigma|^{3/2}}\right) \chi_1 \mu
\left[ 1 - \Theta \left( \frac{\sigma}{s^{\delta_0}} \right) \right] \widehat{f}(s,\sigma) \widehat{f}(s,\eta)  \widehat{f}(s,\xi-\eta-\sigma)\,d\eta\,d\sigma\,ds \right\|_2 \\
& \lesssim \int_2^t s \left\| B_{\left(\frac{\xi-\eta-\sigma}{|\xi-\eta-\sigma|^{3/2}}\right) \chi_1 \mu} \left( P_{\geq \delta_0 \log(s)} u,u,u \right) \right\|_2 ds \lesssim \int_2^t s \left\| P_{\geq \delta_0 \log(s)} \Lambda^{5/2} u \right\|_{8} \left\| u \right\|_{8} \left\| \frac{1}{\Lambda^{1/2}} u \right\|_{4}ds \\
&\lesssim \int_2^t s \left\| P_{\geq \delta_0 \log(s)} \Lambda^{5/2} u \right\|_{8} \left\| u \right\|_{8} \left\| f \right\|_{2}\,ds \lesssim \|u\|_X^3 \int_2^t s 
s^{\delta_0\left(-N + \frac{13}{4} \right)} s^{-\frac{3}{4}} \,ds  \lesssim \|u\|_X^3 .
\end{split} 
\end{equation*}
In order for the last inequality to hold, we need 
\begin{equation}
\label{conditiondN} 
1+\delta_0\left( -N +\frac{13}{4} \right) - \frac{3}{4} < -1.
\end{equation}

\subsubsection*{Bound for~(\ref{elk2})}
In order to estimate~(\ref{elk2}), a further partition of frequency space is needed. Define
\begin{equation}
\begin{split}
& A_I  \overset{def}{=} \{ \partial_\sigma \phi = 0 \} = \{ \xi=\eta \} \\
& A_{II} \overset{def}{=} \{ \partial_\eta \phi = 0 \} = \{ \xi=\sigma \} \\
& A_{III} \overset{def}{=}\{ \partial_\eta \phi = 0 \} \cap \{ \partial_\sigma \phi = 0 \} = \{ \xi=\eta=\sigma \} .
\end{split}
\end{equation}
The associated cut-off functions are $\chi_I$, $\chi_{II}$, $\chi_{III}$, which are taken such that
\begin{itemize}
\item $\chi_I$, $\chi_{II}$, $\chi_{III}$ are valued in $[0,1]$.
\item $\chi_I$, $\chi_{II}$, $\chi_{III}$ are homogeneous of degree $0$ and smooth outside of $(0,0,0)$.
\item On the sphere $|(\xi,\eta,\sigma)| = 1$, $\chi_{III} = 1$ within a distance $\frac{1}{1000}$ of $A_{III}$, and $\chi_{III} = 0$ if the distance to $A_{III}$ is more than $\frac{1}{500}$. Thus on $\operatorname{Supp} \chi_{III}$, equation~(\ref{cat}) holds.
\item On the sphere $|\xi,\eta,\sigma)| = 1$, $\chi_{I}$ (respectively $\chi_{II}$) is $1$ on a neighborhood of $A_{I}$ (respectively $A_{II}$).
\end{itemize}
Then decompose~(\ref{elk2}) with the help of these cut-off functions:
\begin{subequations}
\begin{align}
(\ref{elk2}) = & \int_2^t \!\!\int \!\!\!\int e^{is\phi} s (\partial_\xi \phi) \chi_1 \mu
\Theta \left( \frac{\sigma}{s^{\delta_0}} \right) \widehat{f}(s,\sigma) \widehat{f}(s,\eta)  \widehat{f}(s,\xi-\eta-\sigma)\,d\eta\,d\sigma\,ds \\
\label{tiger1}
= & \int_2^t \!\!\int \!\!\!\int e^{is\phi} s (\partial_\xi \phi) \chi_{I} \chi_1 \mu
\Theta \left( \frac{\sigma}{s^{\delta_0}} \right) \widehat{f}(s,\sigma) \widehat{f}(s,\eta)  
\widehat{f}(s,\xi-\eta-\sigma)\,d\eta\,d\sigma\,ds \\
\label{tiger2}
& + \int_2^t \!\!\int \!\!\!\int e^{is\phi} s (\partial_\xi \phi) \chi_{II} \chi_1 \mu
\Theta \left( \frac{\sigma}{s^{\delta_0}} \right) \widehat{f}(s,\sigma) \widehat{f}(s,\eta)  \widehat{f}(s,\xi-\eta-\sigma)\,d\eta\,d\sigma\,ds \\
\label{tiger3}
& + \int_2^t \!\!\int \!\!\!\int e^{is\phi} s (\partial_\xi \phi) \chi_{III} \chi_1 \mu
\Theta \left( \frac{\sigma}{s^{\delta_0}} \right) \widehat{f}(s,\sigma) \widehat{f}(s,\eta)  \widehat{f}(s,\xi-\eta-\sigma)\,d\eta\,d\sigma\,ds .
\end{align}
\end{subequations}
The idea will be the following: on the support of the symbol of~(\ref{tiger1}) (respectively~(\ref{tiger2})), $\partial_\eta \phi$ (respectively $\partial_\sigma \phi$) does not vanish, thus one should integrate by parts in $\eta$ (respectively $\sigma$). On the support of the symbol appearing in~(\ref{tiger3}), both $\partial_\sigma \phi$ and $\partial_\eta \phi$ vanish, but the identity~(\ref{cat}) is the remedy: it essentially converts $\partial_\xi \phi$ into a combination of $\partial_\sigma \phi$ and $\partial_\eta \phi$, making the integration by parts in $(\eta,\sigma)$ possible.

\bigskip

\noindent
\emph{Bound for~(\ref{tiger1})}
As explained above, we shall in order to treat this term integrate by parts in $\sigma$ using the identity $\frac{\partial_\sigma \phi \cdot \partial_\sigma}{i s |\partial_\sigma \phi|^2} e^{is\phi} = e^{is\phi}$. This gives
(for the sake of simplicity in the notations, we do not differentiate between standard product between scalars, and scalar product between vectors)
\begin{subequations}
\begin{align}
\label{mouse1}
(\ref{tiger1}) = & \int_2^t \!\!\int \!\!\!\int e^{is\phi} \frac{\partial_\xi \phi \partial_\sigma \phi}{|\partial_\sigma \phi|^2} \chi_I \chi_1 \mu
\Theta \left( \frac{\sigma}{s^{\delta_0}} \right) \partial_\sigma \widehat{f}(s,\sigma) \widehat{f}(s,\eta)  \widehat{f}(s,\xi-\eta-\sigma)\,d\eta\,d\sigma\,ds \\
\label{mouse2}
& + \int_2^t \!\!\int \!\!\!\int e^{is\phi} \frac{\partial_\xi \phi \partial_\sigma \phi}{|\partial_\sigma \phi|^2} \chi_I \chi_1 \mu
\Theta \left( \frac{\sigma}{s^{\delta_0}} \right) \widehat{f}(s,\sigma) \widehat{f}(s,\eta)  \partial_\sigma \widehat{f}(s,\xi-\eta-\sigma)\,d\eta\,d\sigma\,ds \\
\label{mouse3}
& + \int_2^t \!\!\int \!\!\!\int e^{is\phi} \partial_\sigma \left[ \frac{\partial_\xi \phi \partial_\sigma \phi}{|\partial_\sigma \phi|^2} \chi_I \chi_1 \mu  \Theta \left( \frac{\sigma}{s^{\delta_0}} \right) \right] \widehat{f}(s,\sigma) \widehat{f}(s,\eta)   \widehat{f}(s,\xi-\eta-\sigma)\,d\eta\,d\sigma\,ds .
\end{align}
\end{subequations}
In order to estimate~(\ref{mouse1}), observe that the symbol $\frac{\partial_\xi \phi \cdot \partial_\sigma \phi}{|\partial_\sigma \phi|^2} \chi_I \chi_1 \mu$ belongs to $\widetilde{\mathcal{T}}_{5/2}$.
Indeed, the $\frac{1}{|\xi|^{1/2}}$ singularity of $\partial_\xi \phi$ is canceled by $\mu$, and the $\frac{1}{|\xi-\eta-\sigma|^{1/2}}$ singularity of $\partial_\xi \phi$ is canceled by $\frac{\partial_\sigma \phi}{|\partial_\sigma \phi|^2}$.
Thus, applying first~(\ref{lemmadeltaj}) - since the Fourier support of the integrand lies within a ball of radius comparable to $s^{\delta_0}$ - one gets
\begin{equation} \label{nad}
\begin{split}
\left\| (\ref{mouse1}) \right\|_2 & \lesssim \int_2^t s^{\frac{\delta_0}{4}} \left\| B_{\frac{\partial_\xi \phi \partial_\sigma \phi}{|\partial_\sigma \phi|^2} \chi_I \chi_1 \mu} \left( P_{<\delta_0 \log s} e^{it\Lambda^{1/2}} (xf) , u , u \right) \right\|_{8/5} \,ds \\
& \lesssim \int_2^t s^{\frac{\delta_0}{4}} \left\| \Lambda^{5/2} P_{<\delta_0 \log s} e^{it\Lambda^{1/2}} (xf) \right\|_2 \left\|u \right\|_{16} \left\| u \right\|_{16} \,ds  \\
& \lesssim \int_2^t s^{\frac{\delta_0}{4}} s^{\frac{5}{2}\delta_0} \left\|xf \right\|_2 \left\|u\right\|_{16} \left\| u \right\|_{16} \,ds  \lesssim \|u\|_X^3 \int_2^t s^{\frac{\delta_0}{4}} s^{\frac{5}{2}\delta_0} s^\delta s^{-\frac{7}{8}} s^{-\frac{7}{8}} \,ds \lesssim \|u\|_X^3 ,
\end{split}
\end{equation}
where the last inequality holds provided that $\frac{\delta_0}{4} + \frac{5}{2}\delta_0 + \delta -\frac{14}{8} < -1$. Thus we choose $\delta_0$ and $N$ such that this inequality holds, as well as~(\ref{conditiondN}).

The estimate of~(\ref{mouse2}) is almost identical; as for the estimate of~(\ref{mouse3}), it follows in a very similar way. Let us say a word about it: the symbol $\partial_\sigma \left[ \frac{\partial_\xi \phi \partial_\sigma \phi}{|\partial_\sigma \phi|^2} \chi_I \chi_1 \mu \right]$ only contributes singularities of the type $\frac{1}{|\sigma|}$, $\frac{1}{|\xi-\eta-\sigma|}$ and $\frac{1}{|\xi-\sigma|^{1/2}}$. We have seen many instances of how these singularities can be treated; the same strategy is valid here.

\bigskip

\noindent
\emph{Bound for~(\ref{tiger2})}
It can be derived in an identical fashion.

\bigskip

\noindent
\emph{Bound for~(\ref{tiger3})}
For this term, we use the identity~(\ref{cat}), that we rewrite as follows: there exists a symbol $\rho \in \mathcal{T}_0$ such that 
$$
\chi_{III} \partial_\xi \phi = \rho(\xi,\eta,\sigma) [\partial_\eta \phi,\partial_\sigma \phi]
$$
(in the above, $\rho$ is a matrix. It is linear in the arguments between brackets).

Using the identity $\chi_{III} \partial_\xi \phi = \frac{1}{is} \rho(\xi,\eta,\sigma) \left[ \partial_\eta e^{is\phi} , \partial_\sigma e^{is\phi} \right]$ to integrate by parts in~(\ref{tiger3}) gives terms which can all be estimated as above. This concludes the proof of claim~\ref{clacla}.

\subsection*{Bound for $\nabla^k e^{-it\Lambda^{1/2}} g_3$ in $L^\infty$, $0\leq k \leq 4$}
First, notice that we can not integrate by parts in time as we did in the weakly 
resonant case since now the phase vanishes on a large set. 
Actually, the proof of this bound follows the steps of the previous argument, in particular Claim~\ref{clacla}.  Indeed, 
in the previous argument, we derived a  bound  for  $\left\| x g_3 \right\|_2$; by the same token, one might prove bounds on $\left\| \nabla^{k+2} x g_3 \right\|_2$. These two quantities barely miss to control $\|\nabla^k g_3\|_{\dot{B}^{3/2}_{1,1}}$, which would give the desired $L^\infty$ decay, by the dispersive estimate in Lemma~\ref{linearbound}. What is needed is a little bit of additional integrability, for instance replacing the Lebesgue index $2$ by $2-\delta_1$, for a small constant $\delta_1$. As we will see, this is possible if one is prepared to lose a small power of $s$; but a small enough power of $s$ is harmless in that it does not prevent the integrals over $s$ occurring above from converging. Hence, we will prove the same estimate as in the previous argument, but in $L^{2-\delta_1}$ instead of $L^2$. 

In order to implement this program, we start by splitting $g_3$ into  high and low frequencies:
\begin{equation*}
\begin{split}
g_3 & =  \int_2^t \!\!\int e^{is\phi} \chi_1 \mu \Theta \left(\frac{\sigma}{s^{\delta_0}} \right) \widehat{f}(s,\eta) \widehat{f}(s,\sigma) \widehat{f}(s,\xi-\eta-\sigma)\,d\eta \\
& \;\;\;\;\;\;\;\;\; +  \int_2^t \!\!\int e^{is\phi} \chi_1 \mu \left[ 1 - \Theta \left(\frac{\sigma}{s^{\delta_0}} \right) \right] \widehat{f}(s,\eta) \widehat{f}(s,\sigma) \widehat{f}(s,\xi-\eta-\sigma)\,d\eta \\
& \overset{def}{=} g_{3,low} + g_{3,high} .
\end{split}
\end{equation*}

\subsubsection*{Bound for $\nabla^k e^{-it\Lambda^{1/2}} g_{3,low}$ in $L^\infty$, with $0 \leq k \leq 4$}

Rewrite $g_{3,low}$ as
$$
g_{3,low} = \int_2^t G_{3,low}(s) \,ds ,
$$
with the obvious definition for $G_{3,low}(s)$. Due to the frequency localization of $G_{3,low}$, one might even write
\begin{equation}
\label{sparrow}
g_{3,low} = \int_2^t P_{<C\delta_0 \log s} G_{3,low}(s) \,ds.
\end{equation}
Computing $x G_{3,low}(s)$ gives terms similar to those appearing inside the time integral of 
 (\ref{wolf2}), (\ref{wolf3}), (\ref{elk2}), namely

\begin{subequations}
\begin{align}
\label{wolf2low}
\mathcal{F} (x  G_{3,low}(s)) =  &  \int \!\!\!\int e^{is\phi} \chi_1 \mu  
 \Theta \left(\frac{\sigma}{s^{\delta_0}} \right)
\widehat{f}(s,\sigma) \widehat{f}(s,\eta) \partial_\xi \widehat{f}(s,\xi-\eta-\sigma)\,d\eta\,d\sigma\,ds \\
\label{wolf3low}
& + \int \!\!\!\int e^{is\phi} \partial_\xi (\chi_1 \mu) \Theta \left(\frac{\sigma}{s^{\delta_0}} \right)
\widehat{f}(s,\sigma) \widehat{f}(s,\eta)  \widehat{f}(s,\xi-\eta-\sigma)\,d\eta\,d\sigma\,ds \\
\label{wolf4low}
& + \int \!\!\!\int e^{is\phi} s (\partial_\xi \phi) \chi_1 \mu\Theta \left(\frac{\sigma}{s^{\delta_0}} \right)
\widehat{f}(s,\sigma) \widehat{f}(s,\eta)  \widehat{f}(s,\xi-\eta-\sigma)\,d\eta\,d\sigma\,ds .
\end{align}
\end{subequations}

 It has been proved in the previous section  that all of these terms can be bounded in $L^2$. More precisely, an inspection of the proof there reveals that
$x G_{3,low}(s)$ can be written
$$
x G_{3,low} (s) = e^{is\Lambda^{1/2}} \widetilde{G}_{3,low}(s)\quad\mbox{with}\quad\left\| \widetilde{G}_{3,low} (s) \right\|_2 \lesssim \frac{1}{s^{1 + \kappa_1}} \|u\|_X^3 ,
$$
for some number $\kappa_1>0$. By repeating the same argument (see for instance (\ref{nad})),
 one can prove that, denoting by $\delta_1$ a small enough constant whose precise value will be set later on,
$$
\left\| \widetilde{G}_{3,low}(s) \right\|_{2-\delta_1} \lesssim \frac{1}{s^{1 + \frac{\kappa_1}{2}}} \|u\|_X^3 .
$$
Lemma~\ref{linearbound} gives
\begin{equation}
\begin{split}
\left\|\nabla^\ell P_{<C\delta_0 \log s} G_{3,low}(s) \right\|_{2-\delta_1} & = \left\|\nabla^\ell P_{<C\delta_0 \log s} e^{is\Lambda^{1/2}} \widetilde{G}_{3,low}(s) \right\|_{2-\delta_1} \\
& \lesssim s^{\ell \delta_0} \left( s^{\delta_0} s^2 \right)^{\frac{1}{2-\delta_1}-\frac{1}{2}} \left\| \widetilde{G}_{3,low}(s) \right\|_{2-\delta_1} \\
& \lesssim \|u\|_X^3 s^{\ell \delta_0} \left( s^{\delta_0} s^2 \right)^{\frac{1}{2-\delta_1}-\frac{1}{2}} \frac{1}{s^{1 + \frac{\kappa_1}{2}}} .
\end{split}
\end{equation}
Thus, choosing $\delta_0$ and $\delta_1$ small enough, and using successively the dispersive estimate in Lemma~\ref{linearbound}, the equation~(\ref{sparrow}), and the above bound,
\begin{equation}
\begin{split}
\left\|\nabla^{k} e^{-it\Lambda^{1/2}} g_{3,low} \right\|_\infty & \lesssim \frac{1}{t}\left\| \nabla^{k} g_{3,low} \right\|_{\dot{B}^{3/2}_{1,1}}  \lesssim \frac{1}{t}\left[ \left\|x g_{3,low} \right\|_{2-\delta_1} + \left\| \nabla^{k+2} x g_{3,low} \right\|_{2-\delta_1} \right] \\
& \lesssim  \frac{1}{t} \int_2^t \left[ \left\| P_{<C\delta_0 \log s} x G_{3,low} \right\|_{2-\delta_1} + \left\| \nabla^{k+2} x P_{<C\delta_0 \log s} G_{3,low} \right\|_{2-\delta_1} \right] \,ds \\
& \lesssim \frac{1}{t} \|u\|_X^3 \int_2^t s^{(k+2) \delta_0} \left( s^{\delta_0} s \right)^{\frac{1}{2-\delta_1}-\frac{1}{2}} \frac{1}{s^{1 + \frac{\kappa_1}{2}}}\,ds\lesssim \frac{1}{t} \|u\|_X^3 .
\end{split}
\end{equation}

\subsubsection*{Bound for $\nabla^k e^{-it\Lambda^{1/2}} g_{3,high}$ in $L^\infty$, with $0 \leq k \leq 4$}

Rewrite $g_{3,high}$ as
$$
g_{3,high} = \int_2^t e^{is\Lambda^{1/2}} \widetilde{G}_{3,high}(s) \,ds. 
$$
Computing $\nabla^\ell x \widetilde{G}_{3,high}(s)$ gives terms which can be estimated just like~(\ref{elk1}); one gets a bound of the type
$$
\left\| \nabla^\ell x \widetilde{G}_{3,high}(s) \right\|_2 \lesssim \frac{1}{s^{1+\kappa_2}} \|u\|_X^3 ,
$$
for a constant $\kappa_2 > 0$. Following the same arguments, but going to a smaller Lebesgue index $2-\delta_2$ gives, provided $\delta_2$ is chosen small enough, and $\ell \leq 6$,
$$
\left\| \nabla^\ell x \widetilde{G}_{3,high}(s) \right\|_{2-\delta_2} \lesssim \frac{1}{s^{1+\frac{\kappa_2}{2}}} \|u\|_X^3 .
$$
As above, this implies that
$$
\left\| \nabla^k \widetilde{G}_{3,high}(s) \right\|_{\dot{B}^{3/2}_{1,1}} \lesssim \frac{1}{s^{1+\frac{\kappa_2}{2}}} \|u\|_X^3 .
$$
To deduce the $L^\infty$ estimate, write
$$
\nabla^k e^{-it\Lambda^{1/2}} g_{3,high} = \int_2^{t-1} \nabla^k e^{i(s-t)\Lambda^{1/2}} \widetilde{G}_{3,high}(s)\,ds + \int_{t-1}^t \nabla^k e^{i(s-t)\Lambda^{1/2}} \widetilde{G}_{3,high}(s)\,ds.
$$
The second summand of the above right-hand side can be estimated using Sobolev embedding and 
 proceeding as in~(\ref{walrus}). As for the first summand,
\begin{equation}
\begin{split}
\left\| \int_2^{t-1}\nabla^k e^{i(s-t)\Lambda^{1/2}} \widetilde{G}_{3,high}(s)\,ds \right\|_\infty \lesssim \|u\|_X^3 \int_2^{t-1} \frac{1}{t-s} \frac{1}{s^{1+\frac{\kappa_2}{2}}} \, ds \lesssim \frac{1}{t} \|u\|_X^3 .
\end{split}
\end{equation}

\section{Estimates for terms of order 4 and higher}

\label{sectionremainder}

In this section, we derive the desired estimates in $L^\infty$ and $L^2(x^2dx)$ for terms of the form~(\ref{butterfly4}) and (\ref{butterfly5}). These terms gather the contributions of order 4 and higher (in $u$) to the nonlinearity, once the normal form transform of Section~\ref{normalform} has been performed.

Being of high order, they decay very fast and can be estimated by brute force, leaving aside the question of resonances.
Since a straightforward approach is sufficient, we only illustrate it in the case of~(\ref{butterfly4}), which we denote
$$
g_4 = \int_2^t e^{is\Lambda^{1/2}} R(s) \,ds.
$$

\subsection*{Bound for $\nabla^k e^{-it\Lambda^{1/2}} g_4$ in $L^\infty$, for $0 \leq k \leq 4$}

We split
$$
\nabla^k e^{-it\Lambda^{1/2}} g_4 = \int_2^t \nabla^k e^{i(s-t)\Lambda^{1/2}} R(s) \,ds
$$
into $\int_2^{t-1} + \int_{t-1}^t$. The first summand is bounded via the dispersive estimate of Lemma~\ref{linearbound} and Proposition~\ref{propR}
$$
\left\| \int_2^{t-1} \nabla^k e^{i(s-t)\Lambda^{1/2}} R(s) \,ds \right\|_{\infty} \lesssim \int_2^{t-1} \frac{1}{t-s} \| R(s) \|_{\dot{B}^{k+\frac{3}{2}}_{1,1}} \,ds \lesssim \frac{1}{t} \|u\|_X^4.
$$
The second summand is bounded using Sobolev embedding and Proposition~\ref{propR}
$$
\left\| \int_{t-1}^t \nabla^k e^{i(s-t)\Lambda^{1/2}} R(s) \,ds \right\|_{\infty} \lesssim \int_{t-1}^t \left\| \nabla^k e^{i(s-t)\Lambda^{1/2}} R(s) \right\|_{\infty}\,ds \lesssim \int_{t-1}^t \left\| R(s) \right\|_{H^{2+k}} \,ds \lesssim \frac{1}{t} \|u\|_X^4 .
$$

\subsection*{Bound for $x g_4$ in $L^2$}

The $L^2$ norm of $xg_4$ can be bounded as follows: 
\begin{equation*}
\begin{split}
\left\| x g_4 \right\|_2 & = \left\| x \int_2^t e^{is\Lambda^{1/2}} R(s) \,ds \right\|_2 \\
& \leq \left\| \int_2^t s \frac{1}{\Lambda^{1/2}} e^{is\Lambda^{1/2}} R(s) \,ds \right\|_2 + \left\| \int_2^t  e^{is\Lambda^{1/2}} x R(s) \,ds \right\|_2 \\
& \lesssim \int_2^t s \left\| R(s) \right\|_{4/3} \, ds + \int_2^t \left\|xR\right\|_2 \,ds.
\end{split}
\end{equation*}
The first term in the last line above can be estimated directly using point $(i)$ of Proposition~\ref{propR}; we leave this to the reader.
The bound for the second term follows with the help of point $(ii)$ of Proposition~\ref{propR}:
$$
\int_2^t \left\|xR\right\|_2 \,ds \lesssim \int_2^t \|\langle x \rangle u \|_2 \|u\|_{W^{3,\infty}}^3 \,ds
\lesssim \int_2^t \left[ \|u\|_2 + s \left\| f \right\|_{4/3} + \left\| x f\right\|_2 \right] \|u\|_{W^{3,\infty}}^3\,ds \lesssim \|u\|^4_X.
$$

\section{Scattering}

\label{sectionscat}

In this section, we prove the scattering result (Corollary~\ref{scat}). On the one hand, decomposing $f$ into terms of the form~(\ref{butterfly1}--\ref{butterfly5}), and examining them separately, it appears that it is a Cauchy sequence in $L^2$ in the sense that there exists a constant $\kappa>0$ such that
$$
\mbox{if $s<t<2s$,}\;\;\;\;\;\;\left\|f(t)-f(s)\right\|_2 \lesssim \frac{1}{t^\kappa}.
$$
On the other hand, we know that
$$
\left\|f(t)\right\|_{H^N \cap L^2(x^2dx)} \lesssim t^\delta\,\,.
$$
Interpolating between these two inequalities gives, for a constant $C$,
$$
\mbox{if $s<t<2s$,}\;\;\;\;\;\;\left\|f(t)-f(s)\right\|_{H^{N-N C_0\delta} \cap L^2(x^{2-C_0 \delta} dx)} \lesssim t^{\delta \left[ -\kappa C_0 + 1 \right]}.
$$
Choosing $C_0$ properly makes $f(t)$ Cauchy in time, which gives the desired result.

\appendix

\section{A few results of linear harmonic analysis}

\label{appendixlin}

\subsection*{Littlewood-Paley theory}

Consider $\theta$ a function supported in the annulus $\mathcal{C}(0,\frac{3}{4},\frac{8}{3})$ such that
$$
\mbox{for $\xi \neq 0$,}\quad\sum_{j \in \mathbb{Z}} \theta \left( \frac{\xi}{2^j} \right) = 1 .
$$
Also define
$$
\Theta(\xi) \overset{def}{=} \sum_{j <0} \theta \left( \frac{\xi}{2^j} \right) .
$$
Define then the Fourier multipliers
$$
P_j \overset{def}{=} \theta\left( \frac{D}{2^j} \right) \quad P_{<j} = \Theta \left( \frac{D}{2^j} \right)\quad P_{\geq j} = 1- \Theta\left( \frac{D}{2^j} \right)
$$
and similarly $P_{\leq j}$, $P_{>j}$.
This gives a homogeneous and an inhomogeneous decomposition of the identity (for instance, in $L^2$)
$$
\sum_{j \in \mathbb{Z}} P_j = \operatorname{Id} \quad\mbox{and}\quad P_{<0} + \sum_{j> 0} P_j = \operatorname{Id}.
$$
All these operators are bounded on $L^p$ spaces:
$$
\mbox{if $1 \leq p \leq \infty$,}\quad \|P_j f \|_p \lesssim \|f\|_p \quad,\quad \|P_{<j} f \|_p \lesssim \|f\|_p\quad\mbox{and} \quad\|P_{>j} f \|_p \lesssim \|f\|_p.
$$
Furthermore, for $P_j f$, taking a derivative is essentially equivalent to multiplying by $2^j$:
\begin{equation}
\label{LPderivative}
\begin{split} 
& \mbox{if $1 \leq p \leq \infty$ and $\alpha \in \mathbb{R}$,}\quad \|\Lambda^\alpha P_j f \|_p \sim 2^{\alpha j} \|P_j f\|_p\\
& \mbox{if $1 \leq p \leq \infty$ and $\ell \in \mathbb{Z}$,} \quad \|\nabla^\ell P_j f \|_p \sim 2^{\ell j} \|P_j f\|_p.
\end{split}
\end{equation}
Also, we  recall Bernstein's lemma: if $1\leq q\leq p \leq \infty$,
\begin{equation}
\label{lemmadeltaj}
\|P_j f \|_p \leq 2^{2j\left( \frac{1}{q}-\frac{1}{p} \right)} \left\| P_j f \right\|_q\quad\mbox{and}\quad \left\| P_{<j} f \right\|_p \leq 2^{2j\left( \frac{1}{q}-\frac{1}{p} \right)} \left\| P_{<j} f \right\|_q .
\end{equation}
Finally, we will need the Littlewood-Paley square and maximal function estimates

\begin{theorem}
\label{LP}
(i) If $f = \sum f_j$, with $\operatorname{Supp}(f_j) \subset \mathcal{C}(0,c2^{-j},C2^{-j})$ (the latter denoting the annulus of center $0$, inner radius $c2^{-j}$, outer radius $C2^{-j}$), and $1 < p < \infty$,
$$
\left\| \sum_j f_j \right\|_p \lesssim \left\| \left[ \sum_j f_j^2 \right]^{1/2} \right\|_p.
$$
Furthermore, denoting $\displaystyle Sf \overset{def}{=} \left[ \sum_j (P_j f)^2 \right]^{1/2}$, $\displaystyle \|Sf\|_p \sim \| f \|_p .$

(ii) If $1 < p \leq \infty$, denoting $ \displaystyle Mf(x) \overset{def}{=} \sup_j \left| P_{<j} f (x) \right|$, $\displaystyle \|Mf\|_p \lesssim \|f\|_p$.
\end{theorem}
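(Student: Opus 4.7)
The plan is to prove (i) via randomization together with a uniform Mikhlin multiplier estimate, and (ii) by pointwise domination against the Hardy--Littlewood maximal function.

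For the first inequality of (i), introduce the Rademacher functions $\{r_j(\omega)\}$ on $[0,1]$. Khintchine's inequality gives
\[
\Bigl\| \Bigl(\sum_j f_j^2\Bigr)^{1/2} \Bigr\|_p \;\sim\; \Bigl(\int_0^1 \Bigl\| \sum_j r_j(\omega) f_j \Bigr\|_p^p \, d\omega\Bigr)^{1/p},
\]
so it suffices to show $\bigl\|\sum_j f_j\bigr\|_p \lesssim \bigl\|\sum_j r_j(\omega) f_j\bigr\|_p$ with a bound independent of $\omega$. Pick a bump $\tilde\theta$ equal to $1$ on the annulus $\mathcal{C}(0,\tfrac34,\tfrac83)$ and set $m_\omega(\xi) \overset{def}{=} \sum_j r_j(\omega)\,\tilde\theta(\xi/2^j)$. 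Because the supports of the $\tilde\theta(\cdot/2^j)$ have bounded overlap and $r_j(\omega) = \pm 1$, one checks directly that $|\partial^\alpha m_\omega(\xi)| \lesssim |\xi|^{-|\alpha|}$ uniformly in $\omega$; hence by the Mikhlin multiplier theorem $m_\omega(D)$ is bounded on $L^p$ for $1<p<\infty$ with norm independent of $\omega$. Applying this to $\sum_j r_j(\omega) f_j$ returns $\sum_j r_j(\omega)^2 f_j = \sum_j f_j$, which is the desired estimate.

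For the equivalence $\|Sf\|_p \sim \|f\|_p$, the bound $\|f\|_p \lesssim \|Sf\|_p$ is an immediate application of the inequality just proved to $f_j = P_j f$. The reverse bound $\|Sf\|_p \lesssim \|f\|_p$ follows by duality: for $g \in L^{p'}$,
\[
\int f\,g \,dx = \sum_j \int (P_j f)(\widetilde{P_j} g)\,dx \le \int Sf \cdot \widetilde{S}g\,dx \le \|Sf\|_p \|\widetilde{S} g\|_{p'},
\]
where $\widetilde{P_j}$ is a fattened Littlewood--Paley projection so that $\widetilde{P_j} P_j = P_j$; taking the supremum over $g$ with $\|g\|_{p'} = 1$ yields the claim, provided we know $\|\widetilde{S} g\|_{p'} \lesssim \|g\|_{p'}$, which is proved exactly as above with $\theta$ replaced by $\tilde\theta$.

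Part (ii) is much shorter. Writing $\Theta = \check\Phi$ with $\Phi$ Schwartz, we have $P_{<j} f = \Phi_{2^j} * f$ where $\Phi_{2^j}(x) = 2^{2j}\Phi(2^j x)$. Since $\Phi$ admits a radial, nonincreasing, integrable majorant, a standard argument yields the pointwise bound $|P_{<j}f(x)| \lesssim \mathcal{M} f(x)$ uniformly in $j$, where $\mathcal{M}$ is the Hardy--Littlewood maximal operator. Taking the supremum over $j$ and invoking the classical $L^p$ boundedness of $\mathcal{M}$ for $1<p\le \infty$ finishes the proof. The only delicate point in the entire argument is the uniform Mikhlin check for the randomized symbol $m_\omega$ in part (i); everything else is standard harmonic-analytic bookkeeping.
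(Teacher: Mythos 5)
The paper does not prove Theorem~\ref{LP} at all---it is recalled as a classical Littlewood--Paley fact---so your proposal can only be judged on its own terms, and as written it has two genuine flaws in part (i). The first is the inversion step: you set $m_\omega(\xi)=\sum_j r_j(\omega)\,\tilde\theta(\xi/2^j)$ and claim that applying $m_\omega(D)$ to $\sum_j r_j(\omega) f_j$ ``returns $\sum_j r_j(\omega)^2 f_j=\sum_j f_j$.'' This is false: the supports of the dilated cutoffs only have \emph{bounded overlap}, they are not disjoint, so $m_\omega(D)f_k=\sum_{|j-k|\le n_0} r_j(\omega)\,\tilde\theta(D/2^j)f_k$ contains nonzero off-diagonal terms $r_j(\omega)r_k(\omega)\,\tilde\theta(D/2^j)f_k$ with $j\neq k$, and these do not cancel for a fixed $\omega$. (You cannot escape by making the cutoffs sharp and disjoint, since sharp annular cutoffs are not uniformly $L^p$-bounded multipliers for $p\neq 2$ in dimension two.) The step is repairable in two standard ways: either average in $\omega$ and use $\int_0^1 r_j r_k\,d\omega=\delta_{jk}$, so that $\sum_j f_j=\int_0^1 m_\omega(D)\bigl[\sum_k r_k(\omega)f_k\bigr]d\omega$ and Minkowski plus your uniform Mikhlin bound and Khintchine finish the argument; or split the indices into finitely many congruence classes $j\equiv r \pmod{n_0}$ so that within each class the fattened cutoffs are genuinely disjointly supported, run your argument on each class, and sum.

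The second flaw is that your last paragraph has the two directions of $\|Sf\|_p\sim\|f\|_p$ interchanged. The duality computation $\int fg=\sum_j\int (P_jf)(\widetilde P_j g)\le \|Sf\|_p\|\widetilde S g\|_{p'}$, after taking the supremum over $\|g\|_{p'}=1$, bounds $\|f\|_p$ by $\|Sf\|_p$ (given $\|\widetilde S g\|_{p'}\lesssim\|g\|_{p'}$); it does not give $\|Sf\|_p\lesssim\|f\|_p$. The upper bound $\|Sf\|_p\lesssim\|f\|_p$ is exactly the direction where your randomization argument works cleanly and needs no inversion: $\sum_j r_j(\omega)P_jf=m_\omega(D)f$ identically, so Khintchine plus the uniform Mikhlin bound give it at once. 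So the correct organization is: prove $\|Sf\|_p\lesssim\|f\|_p$ (and likewise for $\widetilde S$) directly by randomization, and obtain $\|f\|_p\lesssim\|Sf\|_p$ either from the first inequality of (i) applied to $f_j=P_jf$ or from the duality pairing. Part (ii), the domination of $P_{<j}f$ by the Hardy--Littlewood maximal function via a radial decreasing integrable majorant, is correct and standard.
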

 
\subsection*{Fractional integration and dispersion}

\begin{lemma}
\label{linearbound}
\begin{enumerate}
\item (Fractional integration) If $1<p,q<\infty$, $0 < \alpha < \frac{2}{p}$, and $\alpha = \frac{2}{p} - \frac{2}{q}$, then $\displaystyle \left\|\frac{1}{\Lambda^\alpha} f \right\|_q \lesssim \|f\|_p$.
\item (Dispersive estimate) $\displaystyle \left\| e^{it\Lambda^{1/2}} f \right\|_\infty \lesssim \frac{1}{t} \|f\|_{\dot{B}^{3/2}_{1,1}}$.
\item If $1<p \leq 2 \leq q < \infty$, $0 < \alpha < \frac{2}{p}$, and $\alpha = \frac{2}{p} - \frac{2}{q}$, then $\displaystyle \left\|\frac{1}{\Lambda^\alpha} e^{it\Lambda^{1/2}} f \right\|_q \lesssim \|f\|_p$.
\item If $1 \leq p \leq 2 $, $\ell \geq 0$ and $2^j t^2 \geq 1$, $\displaystyle \left\|\nabla^\ell P_{<j} e^{it\Lambda^{1/2}} f \right\|_p \lesssim 2^{\ell j}\left(2^j t^2 \right)^{\left( \frac{1}{p}-\frac{1}{2} \right)} \|f\|_p$.
\end{enumerate}
\end{lemma}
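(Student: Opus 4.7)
The plan is to assemble all four statements from a single dispersive kernel bound combined with standard harmonic analysis.

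Part (1) is the classical Hardy--Littlewood--Sobolev fractional integration inequality in $\RR^2$, which I would simply invoke.

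For part (2), the main step is a stationary-phase bound on each Littlewood--Paley piece. Writing $e^{it\Lambda^{1/2}} P_j f = K_j \ast P_j f$ with
$$K_j(x) = \int_{\RR^2} e^{i(x\cdot\xi + t|\xi|^{1/2})}\theta(\xi/2^j)\,d\xi,$$
I would rescale $\xi = 2^j\eta$ to reduce to the phase $\Phi(\eta) = 2^j x\cdot\eta + t\,2^{j/2}|\eta|^{1/2}$ on the fixed annulus $|\eta|\sim 1$. A direct computation gives $\partial_i\partial_j|\eta|^{1/2} = \frac{\delta_{ij}}{2|\eta|^{3/2}} - \frac{3\eta_i\eta_j}{4|\eta|^{7/2}}$, whose radial and tangential eigenvalues on $|\eta|\sim 1$ are of order one and of opposite sign, so $|\det\mathrm{Hess}_\eta \Phi|\sim t^2 2^j$. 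Two-dimensional stationary phase then yields $\|K_j\|_\infty \lesssim 2^{2j}(t^2 2^j)^{-1/2} = 2^{3j/2}/t$, and summing $\|e^{it\Lambda^{1/2}} P_j f\|_\infty \lesssim \frac{2^{3j/2}}{t}\|P_j f\|_1$ over $j$ gives the $\dot{B}^{3/2}_{1,1}\to L^\infty$ estimate.

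I would derive part (3) by interpolation. The $L^2$ unitarity $\|e^{it\Lambda^{1/2}} f\|_2=\|f\|_2$ combined with part (1) handles the endpoint $p=2$; duality turns this into the symmetric endpoint $q=2$. Riesz--Thorin interpolation along the admissible line $\alpha = 2/p - 2/q$ between these two endpoints (and the trivial $L^2\to L^2$ case) covers the full range $1<p\le 2\le q<\infty$. For part (4), after the same rescaling $\xi = 2^j\eta$, the kernel of $\nabla^\ell P_{<j} e^{it\Lambda^{1/2}}$ becomes $2^{\ell j}$ times a dilate of $F(y,s) = \int e^{i(y\cdot\eta + s|\eta|^{1/2})}\Theta(\eta)\,d\eta$ with $s = t\,2^{j/2}\ge 1$. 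Stationary phase in the region $|y|\gtrsim s$ (where the critical point $|\eta|\sim s^2/|y|^2$ falls in $\mathrm{Supp}\,\Theta$), together with repeated integration by parts in the complementary region $|y|\ll s$ (where $|\nabla_\eta\Phi|\gtrsim s$), yields $\|F(\cdot,s)\|_1 \lesssim s$; undoing the rescaling gives the $p=1$ case of the stated bound. Bernstein plus $L^2$ unitarity handles $p=2$, and Riesz--Thorin closes the intermediate range.

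The only technical obstacle is the stationary-phase kernel bound underlying part (2): the decay rate $1/t$ is specific to the dispersion relation $|\xi|^{1/2}$ of the gravity water waves equation and depends on verifying that the Hessian of $|\eta|^{1/2}$ remains nondegenerate on the reference annulus. Once that kernel bound is in hand, the three remaining statements follow by routine duality, Bernstein, and complex interpolation arguments.
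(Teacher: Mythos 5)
Your treatment of the two parts the paper actually proves is sound. For (2) you are doing exactly what the paper does, only with the black box opened: the paper invokes the stationary phase lemma on the unit-frequency piece to get $\| \mathcal{F}^{-1} e^{it|\xi|^{1/2}}\tilde\theta \|_\infty \lesssim 1/t$ and then rescales to get $2^{3j/2}/t$, whereas you rescale first and verify the nondegeneracy of the Hessian of $|\eta|^{1/2}$ on the annulus by hand; both routes are the same estimate. For (4) your route is genuinely different: the paper never does stationary phase here, but instead scales to $t=1$, decomposes $\Theta(\xi/2^j)$ dyadically, and bounds each piece's kernel in $L^1$ by Cauchy--Schwarz, $\|g\|_1 \le \|g\|_2^{1/2}\|x^2 g\|_2^{1/2}$, and Plancherel, summing $\sum_{k\le j} 2^{k/2} \sim 2^{j/2}$; you instead estimate the kernel $F(y,s)$ directly, with stationary phase for $|y|\gtrsim s$ and nonstationary phase for $|y|\ll s$, to get $\|F(\cdot,s)\|_1 \lesssim s$. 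Your bound is the correct one (it matches the exponent $(2^j t^2)^{1/2}$ at $p=1$), and the subsequent interpolation with the trivial $L^2$ bound is the same as the paper's. The paper's argument buys simplicity (no oscillatory-integral casework, only Plancherel); yours buys the sharper pointwise picture $|F(y,s)|\lesssim s^2/|y|^3$ in the stationary region. Do note, however, that since $\Theta$ does not vanish at $\eta=0$ and the phase $|\eta|^{1/2}$ is singular there, your integrations by parts and your stationary-phase analysis (the critical radius $s^2/|y|^2$ can be arbitrarily small) both require a dyadic localization in $|\eta|$ before they are legitimate --- this is implicitly the same decomposition the paper performs.

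There is one step that fails as stated: your proof of (3). For fixed $\alpha$, interpolating the operator $\Lambda^{-\alpha}e^{it\Lambda^{1/2}}$ between the endpoints $p=2$ (giving $q=2/(1-\alpha)$) and $q=2$ (giving $p=2/(1+\alpha)$) only makes sense when $\alpha<1$; for $\alpha\ge 1$ these endpoints degenerate to $q=\infty$, respectively $p=1$, where Hardy--Littlewood--Sobolev is unavailable, so your two anchor estimates do not exist. This is not an exotic corner of the range: the paper uses (3) with $\alpha=1$, e.g.\ $\left\| e^{it\Lambda^{1/2}}\Lambda^{-1} f\right\|_4 \lesssim \|f\|_{4/3}$ in the estimate of the term~(\ref{ant3}). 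The fix is elementary and avoids interpolation altogether: since $p\le 2\le q$, write $\alpha = \left(\frac2p-1\right)+\left(1-\frac2q\right)$ with both summands in $[0,1)$, and estimate
\begin{equation*}
\left\| \Lambda^{-\alpha} e^{it\Lambda^{1/2}} f \right\|_q
= \left\| \Lambda^{-(1-\frac2q)} e^{it\Lambda^{1/2}} \Lambda^{-(\frac2p-1)} f \right\|_q
\lesssim \left\| e^{it\Lambda^{1/2}} \Lambda^{-(\frac2p-1)} f \right\|_2
= \left\| \Lambda^{-(\frac2p-1)} f \right\|_2
\lesssim \|f\|_p ,
\end{equation*}
using part (1) on each side of the unitary flow (alternatively, Stein interpolation for the analytic family $\Lambda^{-z}e^{it\Lambda^{1/2}}$ works). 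With that replacement, your proposal covers the full lemma.
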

\begin{proof} We only prove the second  and forth  points, the rest being standard or elementary.

\medskip

\noindent
\emph{Proof of $(2).$}
The stationary phase lemma gives
$$
\left\| \mathcal{F}^{-1} e^{it|\xi|^{1/2}} \tilde \theta(\xi) \right\|_\infty \lesssim \frac{1}{t} .
$$
This implies that
$$
\left\| P_0 e^{it\Lambda^{1/2}} f \right\|_\infty \lesssim \left\| P_0  f \right\|_1 \left\| \mathcal{F}^{-1} e^{it|\xi|^{1/2}} \tilde \theta(\xi) \right\|_\infty \lesssim \frac{1}{t} \left\| P_0 f \right\|_1 .
$$
By scaling,
$$
\left\| P_j e^{it\Lambda^{1/2}} f \right\|_\infty \lesssim 2^{\frac{3}{2}j} \frac{1}{t} \| P_j  f\|_1.
$$
This inequality gives immediately the desired conclusion.

\medskip
 
\noindent
\emph{Proof of $(4).$} First notice that it suffices to show this result for $\ell = 0$. It will follow from interpolation between the $L^2$ estimate, which is clear, and the $L^1$ estimate, which reads
$$
\mbox{if $2^j t^2 \geq 1$,}\qquad\left\| P_{<j} e^{it\Lambda^{1/2}} f \right\|_1 \lesssim \left(2^j t^2 \right) \|f\|_1 .
$$
By scaling, it suffices to prove this estimate if $t = 1$ and $j \geq 0$. This is done as follows
\begin{equation*}
\begin{split}
\left\| P_{<j} e^{i\Lambda^{1/2}} \right\|_{L^1 \rightarrow L^1} & \lesssim \left\| \mathcal{F}^{-1} \Theta \left( \frac{\xi}{2^j} \right) e^{i|\xi|^{1/2}} \right\|_1 \\
& \lesssim \left\| \mathcal{F}^{-1} \Theta \left( \xi \right) e^{i|\xi|^{1/2}} \right\|_1 + \sum_{k=1}^j \left\| \mathcal{F}^{-1} \theta \left( \frac{\xi}{2^k} \right)e^{i|\xi|^{1/2}} \right\|_1 \\
& \lesssim 1 + \sum_{k=1}^j \left\| \mathcal{F}^{-1} \theta \left( \frac{\xi}{2^k} \right) e^{i |\xi|^{1/2}} \right\|_2^{1/2} \left\| x^2 \mathcal{F}^{-1} \theta \left( \frac{\xi}{2^k} \right) e^{i|\xi|^{1/2}} \right\|_2^{1/2} \\
& \lesssim 1 + \sum_{k=1}^j \left\|\theta \left( \frac{\xi}{2^k} \right) e^{i |\xi|^{1/2}} \right\|_2^{1/2} \left\| \partial_\xi^2 \left[ \theta \left( \frac{\xi}{2^k} \right) e^{i|\xi|^{1/2}} 
  \right]  \right\|_2^{1/2} \\
& \lesssim 1 + \sum_{k=1}^j 2^{k/2} \left( 1 + 2^{-k} \right)^{1/2} \lesssim 2^{j/2}.
\end{split}
\end{equation*}
\end{proof}

\section{Some general facts on pseudo-product operators}

\label{appendixpp}

Let us first give the definition of pseudo-product operators, which were introduced by Coifman and Meyer; we only consider the bilinear and trilinear cases, which are of interest for our problem. 
Bilinear (respectively trilinear) operators are defined via their symbol $m(\xi,\eta)$ (respectively $m(\xi,\eta,\sigma)$) by
\begin{equation*}
\begin{split}
& B_{m(\xi,\eta)} (f_1,f_2) \overset{def}{=} {\mathcal{F}}^{-1} \int m(\xi,\eta) \hat{f}_1(\eta) \hat{f}_2(\xi-\eta) d\eta \\
& B_{m(\xi,\eta,\sigma)}(f_1,f_2,f_3) \overset{def}{=} {\mathcal{F}}^{-1} \int m(\xi,\eta,\sigma) \hat{f}_1(\sigma) \hat{f}_2(\eta) \hat{f}_3(\xi-\eta-\sigma) \,d\eta \,d\sigma.
\end{split}
\end{equation*}

The fundamental theorem of Coifman and Meyer~\cite{coifmanmeyer} states, under a natural condition, that these operators have the same boundedness properties as the ones given by H\"older's inequality for the standard product.
\begin{theorem}
\label{CoifmanMeyer}
Let $n$ be either $2$ or $3$, and suppose that $m$ satisfies
\begin{equation}
\label{butterfly}
\partial_{\xi_1}^{\alpha_1} \dots \partial_{\xi_n}^{\alpha_n} m (\xi_1,\dots,\xi_n) \lesssim \frac{1}{\left(|\xi_1| + \dots + |\xi_n| \right)^{|\alpha_1| + \dots + |\alpha_n|}}
\end{equation}
for sufficiently many multi-indices. Then
$$
\left\| B_m (f_1,\dots,f_n) \right\|_r \lesssim \|f_1\|_{p_1} \dots \|f_n\|_{p_n} \quad\mbox{if 
$\frac{1}{r} = \frac{1}{p_1} + \dots + \frac{1}{p_n}$, $1<p_1\dots p_n \leq \infty$ and  $0<r<\infty$.}
$$
\end{theorem}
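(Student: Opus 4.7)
The plan is the classical Coifman-Meyer paraproduct argument, which I describe for the bilinear case $n=2$; the trilinear case is analogous. First I introduce a smooth homogeneous partition of unity in the $(\xi,\eta)$ frequency space adapted to the three regions $\{|\eta| \ll |\xi-\eta|\}$, $\{|\xi-\eta| \ll |\eta|\}$ and $\{|\eta| \sim |\xi-\eta|\}$, writing $m = m^{(1)}+m^{(2)}+m^{(3)}$ with $m^{(i)}$ supported in the corresponding cone. Each piece inherits the Mikhlin-type bounds~\eqref{butterfly}. Since $m^{(1)}$ and $m^{(2)}$ are swapped by interchanging the two input functions, it suffices to handle the low-high piece $B_{m^{(1)}}$ and the high-high piece $B_{m^{(3)}}$.

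For the low-high piece I dyadically decompose in the large frequency: setting $m_j^{(1)} := m^{(1)}\,\theta(|\xi-\eta|/2^j)$, each $m_j^{(1)}$ is supported where $|\xi-\eta|\sim 2^j$ and $|\eta|\lesssim 2^{j-C}$. After rescaling $(\xi,\eta)\mapsto 2^j(\xi,\eta)$, the bound~\eqref{butterfly} makes $m_j^{(1)}$ uniformly bounded in $C^\infty$ on a fixed compact set, so it admits a Fourier series expansion
$$m_j^{(1)}(\xi,\eta) = \sum_{k,\ell \in \mathbb{Z}^2} c_{j,k,\ell}\,e^{i(k\cdot(\xi-\eta)+\ell\cdot\eta)/2^j},\qquad |c_{j,k,\ell}|\lesssim (1+|k|+|\ell|)^{-N}$$
for any fixed $N$, uniformly in $j$ (the number of derivatives needed in~\eqref{butterfly} grows with $N$). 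The exponentials correspond to harmless spatial translations applied to Littlewood-Paley blocks, so $B_{m^{(1)}}$ reduces, up to these translations and the absolutely summable coefficients, to paraproducts of the form $\sum_j (P_j f_2)(P_{<j-C} f_1)$. Applying the square function and maximal function estimates from Theorem~\ref{LP} then yields
$$\Bigl\|\sum_j (P_j f_2)(P_{<j-C} f_1)\Bigr\|_r \lesssim \|Sf_2\|_{p_2}\|Mf_1\|_{p_1} \lesssim \|f_1\|_{p_1}\|f_2\|_{p_2},$$
and summing the decaying coefficients in $(k,\ell)$ completes the bound on $B_{m^{(1)}}$.

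For the high-high piece the same rescaling and Fourier-expansion procedure produce a diagonal paraproduct of the form $\sum_j (P_j f_1)(P_j f_2)$. Because $P_j f_1\cdot P_j f_2$ has Fourier support in a ball of radius $\sim 2^j$ rather than in an annulus, I reorganize the sum as $\sum_k P_k \bigl[\sum_{j\geq k-C}(P_j f_1)(P_j f_2)\bigr]$; Cauchy-Schwarz in $j$ inside the $L^r$ norm, followed by the Littlewood-Paley square function bound of Theorem~\ref{LP}, controls this by $\|Sf_1\|_{p_1}\|Sf_2\|_{p_2}$.

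The trilinear case $n=3$ follows the same strategy with a finer partition of unity classifying the relative sizes of $|\eta|,|\sigma|,|\xi-\eta-\sigma|$. The main obstacle is the endpoint behavior: when some $p_i=\infty$ the maximal-function step must be replaced by Fefferman-Stein $H^1$-BMO duality, and when $r\leq 1$ one must substitute Hardy-space arguments for the $L^r$ square function estimate; both are standard once the paraproduct decomposition has been isolated, and for the applications in this paper only the easy range $1<r<\infty$ with $1<p_i<\infty$ is actually needed.
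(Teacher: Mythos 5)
First, a point of comparison: the paper does not prove Theorem~\ref{CoifmanMeyer} at all --- it is quoted as the classical Coifman--Meyer theorem from~\cite{coifmanmeyer}, and what the paper actually proves are its variants (Theorem~\ref{theoflag} for flag singularities, Theorems~\ref{bilinearbound} and~\ref{trilinearbound} for the classes $\mathcal{B}_s$ and $\mathcal{T}_s$), using exactly the template you describe: a paraproduct decomposition, a scale-by-scale expansion of the symbol with rapidly decaying coefficients so that variables separate, and the square/maximal function estimates of Theorem~\ref{LP}. So there is no internal proof to measure you against; your sketch is the standard argument and is fully consonant with the paper's toolkit.

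Two caveats about your write-up. The treatment of the high--high (diagonal) piece does not close as written: after reorganizing the sum as $\sum_k P_k\bigl[\sum_{j\geq k-C}(P_jf_1)(P_jf_2)\bigr]$ there is no gain in $j-k$, so applying the square function bound of Theorem~\ref{LP} in $k$ and then Cauchy--Schwarz in $j$ leaves you with an $\ell^2_k$ sum of terms that are bounded only uniformly in $k$, which diverges. The repair is classical and simpler than what you propose: no reorganization is needed, since the pointwise bound $\bigl|\sum_j P_jf_1\,P_jf_2\bigr| \leq Sf_1\,Sf_2$ (Cauchy--Schwarz in $j$ at each point) followed by H\"older and $\|Sf_i\|_{p_i}\sim\|f_i\|_{p_i}$ already gives the diagonal estimate; alternatively one argues by duality against $h\in L^{r'}$ when $r>1$. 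Second, the statement you are proving allows $0<r\leq 1$ and $p_i=\infty$, and you defer these endpoints entirely to ``standard'' Hardy space and $H^1$--BMO arguments; strictly speaking, then, your proposal establishes the theorem only in the open range $1<p_i,r<\infty$, which is indeed the only range the paper uses, but the full statement should be acknowledged as a citation of~\cite{coifmanmeyer} rather than as something your sketch covers.
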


The class of symbols allowed by the Coifman-Meyer theorem (typically, symbols homogeneous of degree 0 and smooth outside the origin) does not contain the symbols occurring in our analysis of the water wave problem. We will therefore be led to introducing a new class of symbols, in section~\ref{appendquad}.

In the trilinear case, these new symbols will amongst other discrepancies with the Coifman-Meyer case exhibit flag singularities, to which we now turn. What is a flag singularity? If one considers two bilinear symbols $m_1$ and $m_2$, and one trilinear symbol $m_3$, all of Coifman-Meyer type, the new symbol $m(\xi,\eta,\sigma)= m_3(\xi,\eta,\sigma) m_1(\xi,\eta) m_2(\xi-\eta,\sigma)$ does not satisfy the Coifman-Meyer estimates (in a neighborhood of $\xi=\eta=0$ and $\xi-\eta-\sigma =\sigma=0$). This type of symbol is said to have a flag singularity. The following theorem gives the boundedness of such operators. A result of Muscalu~\cite{Muscalu} is slightly less general, but we are able to give a simpler proof since the range of Lebesgue exponents we are interested in is smaller.

\begin{theorem}
\label{theoflag}
If $m_1(\xi,\eta)$, $m_2(\xi,\eta)$, $m_3(\xi,\eta,\sigma)$ are symbols satisfying the estimate~(\ref{butterfly}), then
$$
\left\| B_{m_3(\xi,\eta,\sigma)m_1(\xi,\eta)m_2(\xi-\eta,\sigma)} (f,g,h) \right\|_p \lesssim \|f\|_{p_1} \|g\|_{p_2} \|h\|_{p_3} 
$$
for $1<p,p_1,p_2,p_3<\infty$ and $\frac{1}{p_1}+\frac{1}{p_2}+\frac{1}{p_3} = \frac{1}{p}$.
\end{theorem}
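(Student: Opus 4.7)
The strategy is to reduce the theorem to two applications of the classical bilinear Coifman--Meyer theorem (Theorem~\ref{CoifmanMeyer}), after disentangling the flag singularity and the general $m_3$ factor. The key algebraic observation, valid when $m_3\equiv 1$, is the factorization
\[
B_{m_1(\xi,\eta)\,m_2(\xi-\eta,\sigma)}(f,g,h) \;=\; B_{m_1}\!\bigl(g,\,B_{m_2}(f,h)\bigr),
\]
which follows from Fubini in Fourier space: the dependence of $m_2$ only on $(\xi-\eta,\sigma)$ lets the $\sigma$-integration be carried out first, producing $\widehat{B_{m_2}(f,h)}$ at frequency $\xi-\eta$, after which $m_1$ acts as a bilinear symbol pairing $g$ with this intermediate function.

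To incorporate a general $m_3$, I would reduce to the tensorized case via a Fourier-series expansion on dyadic scales. A Littlewood--Paley decomposition in each of $\xi,\eta,\sigma$ localizes $m_3$ to a dyadic cube of side $2^k$, on which its Coifman--Meyer smoothness permits an absolutely convergent expansion
\[
m_3(\xi,\eta,\sigma)\,\chi_k(\xi,\eta,\sigma) \;=\; \sum_{\vec n\in\mathbb{Z}^3} c_{\vec n,k}\, e^{i2^{-k}(n_1\xi+n_2\eta+n_3\sigma)}\,\chi_k(\xi,\eta,\sigma),
\]
with $|c_{\vec n,k}|\lesssim (1+|\vec n|)^{-M}$ for any $M$. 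Each exponential factors as a product of one-variable phases which, in physical space, correspond to translations of uniformly bounded $L^p$ operator norm. Summing the rapidly convergent series in $\vec n$ and reassembling the dyadic pieces via the square-function bound of Theorem~\ref{LP} reduces the problem to operators of the form treated in the previous paragraph, with $m_1$ and $m_2$ modified by harmless bounded Coifman--Meyer factors.

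It then remains to chain Coifman--Meyer twice. Setting $\tfrac{1}{q}=\tfrac{1}{p_1}+\tfrac{1}{p_3}$, Theorem~\ref{CoifmanMeyer} gives $\|B_{m_2}(f,h)\|_q \lesssim \|f\|_{p_1}\|h\|_{p_3}$ and then $\|B_{m_1}(g,\cdot)\|_p \lesssim \|g\|_{p_2}\|\cdot\|_q$, which combine into the desired trilinear bound.

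\textbf{The main obstacle} is that the intermediate exponent $q$ lies in the Banach range $1<q<\infty$ only when $\tfrac{1}{p_1}+\tfrac{1}{p_3}<1$; when $\tfrac{1}{p_1}+\tfrac{1}{p_3}\ge 1$ the naive chaining breaks down. To recover the bound in that case, I would perform a paraproduct decomposition of $m_1(\xi,\eta)$ according to whether $|\xi|\sim|\eta|$ or one variable dominates the other, and in the off-diagonal pieces absorb the intermediate function directly into $L^p$ via a Littlewood--Paley square-function argument (Theorem~\ref{LP}) combined with Bernstein's inequality~\eqref{lemmadeltaj}, thereby bypassing any $L^q$ estimate with $q\le 1$. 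The restriction to the Banach range $1<p,p_1,p_2,p_3<\infty$ in the statement is precisely what keeps this paraproduct analysis elementary and avoids the multiparameter Calder\'on--Zygmund machinery underlying Muscalu's more general result~\cite{Muscalu}.
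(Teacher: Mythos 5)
Your composition identity $B_{m_1(\xi,\eta)m_2(\xi-\eta,\sigma)}(f,g,h)=B_{m_1}\bigl(g,B_{m_2}(f,h)\bigr)$ is correct, and chaining Theorem~\ref{CoifmanMeyer} through it is legitimate; this matches the paper's own observation (Step 5) that the $i=0$ model operators are compositions of bilinear Coifman--Meyer operators. However, the ``main obstacle'' to which you devote your final paragraph is vacuous: under the stated hypotheses $\frac{1}{p_1}+\frac{1}{p_3}=\frac{1}{p}-\frac{1}{p_2}<\frac{1}{p}<1$, so the intermediate exponent $q$ always satisfies $1<p<q<\infty$ and the sub-Banach chaining problem you propose to repair never arises.

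The genuine gap is in the step where you incorporate a general $m_3$. A Fourier-series expansion of $m_3$ is only available after localizing to a bounded frequency region, and what the expansion produces is (translations) composed with $B_{\chi_k\, m_1 m_2}$, where $\chi_k$ is a cutoff in the joint variable $(\xi,\eta,\sigma)$ at scale $2^k$; this is not ``$m_1$ and $m_2$ modified by harmless bounded Coifman--Meyer factors'' and does not reduce matters to the $m_3\equiv 1$ case. The real difficulty, which your sketch bypasses, is the recombination of the dyadic pieces: on the bad regions (say $|\xi|+|\eta|\ll|\sigma|$) the inner bilinear output lives at a frequency $|\xi-\eta|$ far below the localization scale, so the scale-$k$ cutoffs couple the inner and outer bilinear structures, and the sum over scales cannot be dispatched by citing Theorem~\ref{LP}. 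This is precisely where the paper's proof does its work: a decomposition into the regions $A_1,A_2$ with the cutoff realized as the paraproduct~(\ref{squirrel}); a Taylor expansion of $\tilde m_3=m_3m_2$ about $(\xi,\eta)=0$, whose main terms $\Phi_{\alpha\beta}(\sigma)\xi^\alpha\eta^\beta$ do separate variables but are individually unbounded (homogeneous of opposite degrees), the balance being restored only by the support restriction; a paraproduct decomposition of $m_1$; and finally the model operators~(\ref{kingfisher}), which carry the gain $2^{i(j-k)}$ in the scale separation and are estimated uniformly via the square-function and vector-valued maximal-function bounds before summing over $J=k-j$. Without an argument of this type (or Muscalu's), the assertion that the dyadic pieces ``reassemble'' is unsupported, so the proposal as it stands does not prove the theorem.
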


\begin{proof} \noindent \emph{Step 1: Partition of the frequency space.}
Set $m(\xi,\eta,\sigma) = m_3(\xi,\eta,\sigma)m_1(\xi,\eta)m_2(\xi-\eta,\sigma)$. First observe that there are certain regions of the $(\xi,\eta,\sigma)$ plane where $m$ satisfies the Coifman-Meyer estimates~(\ref{butterfly}); then the Coifman-Meyer theorem applies, and the desired estimate is proved. 
Thus, using a cut-off function, we can reduce the problem to the regions where the Coifman-Meyer estimate~(\ref{butterfly}) does not hold for $m$, namely 
$$
A_1 \cup A_2 \overset{def}{=} \{ |\xi| + |\eta| < <
 |\sigma| \} \cup \{ |\xi-\eta| + |\sigma| < < |\xi| \},
$$
We further observe that on $A_1$ (respectively: on $A_2$), $m_2(\xi-\eta,\sigma)$ (respectively $m_1(\xi,\eta)$) satisfies the Coifman-Meyer estimate in $(\xi,\eta,\sigma)$. Thus with the help of cutoff functions, we can reduce matters to symbols of one of the two following types 
\begin{subequations}
\begin{align}
\label{lion1} 
& \chi_{A_1}(\xi,\eta,\sigma) \tilde m_3(\xi,\eta,\sigma) m_1(\xi,\eta)\quad\mbox{(where $\chi_{A_1}$ localizes near $A_1$)} \\
\label{lion2} 
& \chi_{A_2}(\xi,\eta,\sigma)  \tilde m_3(\xi,\eta,\sigma) m_2(\xi-\eta,\sigma)\quad\mbox{(where $\chi_{A_2}$ localizes near $A_2$)}
\end{align}
\end{subequations}
where $\tilde m_3$ stands for $m_3 m_2$ or $m_3 m_1$. 
Estimates for one of these symbols can be deduced from the other by duality and using that for any two symbols $\mu$ and $\nu$, and $\langle\cdot , \cdot \rangle$ denoting the standard (complex) scalar product,
$$
\langle B_{\mu(\xi,\eta,\sigma) \nu(\xi,\eta)} (f_1,f_2,f_3) \,,\,f_4 \rangle = \langle B_{\mu(-\sigma,\xi-\eta-\sigma,-\xi) \nu(-\sigma,\xi-\eta-\sigma)} (\bar{f_4},f_3,f_2)\,,\,\bar{f_1} \rangle .
$$
Therefore, we simply prove estimates for the symbol~(\ref{lion1}). We define the cut-off function $\chi_{A_1}$ by
\begin{equation}
\label{squirrel}
B_{\chi_{A_1}(\xi,\eta,\sigma)}(f,g,h) = \sum_{|k-k'|\leq 1}  P_{<k-100} \left(P_k f  P_{k'} h \right)P_{<k-100} g.
\end{equation}
We will suppress the index $k'$ in the sequel and just take $k'=k$ to make notations lighter. 
\bigskip

\noindent
\emph{Step 2: series expansion for the symbol $\tilde m_3$}
Expanding the symbol $\tilde m_3(\xi,\eta,\sigma)$ near $(\eta,\xi)=0$ gives
$$ 
\tilde m_3(\xi,\eta,\sigma) = \sum_{|\alpha+\beta|=0}^{M-1} \Phi_{\alpha,\beta}(\sigma) \xi^\alpha \eta^\beta + R(\xi,\eta,\sigma),
$$ 
where $\Phi_{\alpha,\beta}(\sigma) = \frac{\partial^{\alpha+\beta}}{\partial^\alpha \xi \partial^\beta \eta} m_3(0,0,\sigma)$ has homogeneous bounds of degree $-|\alpha+\beta|$ i.e. satisfies $\left|\partial_\sigma^{\gamma} \Phi_{\alpha,\beta} \right| \lesssim |\sigma|^{-|\gamma|-|\alpha+\beta|}$, and the remainder $R$ is such that 
\begin{equation}
\label{boundR}
\left| \partial_{\eta,\xi}^\beta \partial^\gamma_\sigma R(\xi,\eta,\sigma) \right| = O \left( \frac{\left( |\xi| + |\eta| \right)^{M-|\beta|}}{|\sigma|^{M+|\alpha|}} \right).
\end{equation}
This estimate implies that, if $M$ is chosen big enough, the symbol resulting from the multiplication of $R$ and $m_1 \chi_{A_1}$ is of Coifman-Meyer type, thus we can forget about it. Thus it suffices to treat the summands of the first term of the above right-hand side; to simplify notations a little in the following, we simply consider the case $\alpha=0$, and therefore get symbols of the type
\begin{equation}
\label{marmotte}
m(\xi,\eta,\sigma) = \chi_{A_1}(\xi,\eta,\sigma) \Phi_i(\sigma) \eta^i m_1(\xi,\eta) \,\,,
\end{equation}
where $\Phi_i$ has homogeneous bounds of degree $-i$, and $m_1$ satisfies the Coifman-Meyer estimates. 

\bigskip

\noindent 
\emph{Step 3: Paraproduct decomposition for the symbol $m_1$.}
Proceeding as in the original proof of Coifman and Meyer~\cite{coifmanmeyer}, we will now perform a paraproduct decomposition of $m$, and then expand the resulting symbols scale by scale. Write
$$
B_{m_1}(f,g) = \sum_j B_{m_1} (P_j f P_{<j-1} g) + \sum_j B_{m_1} (P_{<j-1} f P_j g) + \sum_{|j-\ell|\leq 1} B_{m_1} (P_j f, P_\ell g).
$$
Next consider the symbol of one of the elementary bilinear operators above, for instance $B_{m_1}(P_j \cdot, P_{<j-1} \cdot)$. Denote $m_1^j(\xi,\eta)$ for this symbol, which is compactly supported (in $(\xi,\eta)$), and expand it in Fourier series 
$$
m_1^j(\xi,\eta) = \chi(2^{-j}(\xi,\eta)) \sum_{p,q \in \mathbb{Z}^2} a_{p,q}^j  e^{i c2^{-j} (p,q)\cdot (\xi,\eta)},
$$
where we denoted $c$ for a constant, $\chi$ for a cut-off function, and $a_{p,q}^j$ for the Fourier coefficients.

It is now possible to forget about the summation over $p,q$ because of the fast decay of the coefficients $a^j_{p,q}$, which results from the smoothness of the symbol. Indeed, the complex exponentials $e^{i c2^{-j} (p,q).(\xi,\eta) }$ become in physical space translations, which add polynomial factors to the estimates to come. These polynomial factors are offset by the decay of the $a_{p,q}$. This leads to replacing $a_{p,q}^j$ by $a_j \in \ell^\infty$. It is now possible to consider that the $a_j$ are actually constant in $j$: if this is not the case, it essentially corresponds to composing one of the argument functions by a Calderon-Zygmund operator bounded on Lebesgue spaces, which is harmless. For these reasons, it will suffice to treat the case when $m_1$ corresponds to one of the three paraproduct operators
$$
(f,g) \mapsto \quad \sum_j P_j f P_{<j-1} g \quad ; \quad \sum_j P_{<j-1} f P_j g \quad ; \quad \sum_{j}  P_j f P_j g
$$
(we suppressed the index $\ell$ in the last summation to make notations lighter by taking 
$\ell=j$). 

\bigskip

\noindent
\emph{Step 4: Derivation of the model operators.}
Combining this last line with~(\ref{marmotte}), we see that the operators of interest for us become
\begin{equation}
\begin{split}
& \sum_{j,k} P_{<k-100} P_j \left(\Phi_i(D) P_k f P_k h \right) \nabla^i P_{<j-1} P_{<k-100} g \\
& \sum_{j,k} P_{<k-100} P_{<j-1} \left(\Phi_i(D) P_k f P_k h \right) \nabla^i P_j P_{<k-100} g \\
& \sum_{j,k} P_{<k-100} P_j \left(\Phi_i(D) P_k f P_k h \right) \nabla^i P_j P_{<k-100} g.
\end{split}
\end{equation}
We now make some observations which allow us to simplify the above operators:
\begin{itemize}
\item First remark that $\Phi_i(D) P_k$, $\nabla^i P_j$ and $\nabla^i P_{<j}$ can be written respectively $2^{-ik} \widetilde{P}_k$, $2^{ij} \widetilde{\widetilde{P}}_j$ and $2^{ij} \widetilde{P}_{<j}$ with obvious notations. Since the operators with tildes have similar properties to the operators without tildes, we will in the following forget about the tildes.
\item Next notice that due to the Fourier space support properties of the different terms above, it is possible to restrict the summation to $j \leq k-97$.
\item Finally, since $P_{<k-100} P_j = P_j$ and $P_{<j-1} P_{<k-100} = P_{<j-1}$ for $j \leq k-103$, it is harmless to forget about the $P_{<k-100}$ operators in the above sums.
\end{itemize}
All these remarks lead to the following simplified versions of the above operators:
\begin{equation}
\label{kingfisher}
\begin{split}
& \sum_{k \geq j+97} 2^{i(j-k)} P_j \left(P_k f P_k h \right) P_{<j-1} g \\
& \sum_{k \geq j+97} 2^{i(j-k)} P_{<j-1} \left(P_k f P_k h \right) P_j g \\
& \sum_{k \geq j+97} 2^{i(j-k)} P_j \left(P_k f P_k h \right) P_j g
\end{split}
\end{equation}

\bigskip

\noindent
\emph{Step 5: The case $i=0$.}
If $i=0$, observe that, due to the Fourier support properties of the Littlewood-Paley operators, the operators in~(\ref{kingfisher}) are equal respectively to
\begin{equation}
\label{whale}
\begin{split}
& \sum_j P_j \left(\sum_k P_k f P_k h \right) P_{<j-1} g \\
& \sum_j  P_{<j-1} \left(\sum_k P_k f P_k h \right) P_j g \\
& \sum_j P_j \left(\sum_k P_k f P_k h \right) P_j g
\end{split}
\end{equation}
up to a difference term which is Coifman-Meyer. But the operators in~(\ref{whale}) are simply compositions of bilinear Coifman-Meyer operators. Thus the desired bounds follow for them.

\bigskip

\noindent
\emph{Step 6: The case $i>0$.}
If $i>0$, we see that it suffices to prove uniform estimates in $J \geq 0$ for the operators
\begin{subequations}
\begin{align}
\label{beaver1}
& \sum_{j} P_j \left(P_{j+J} f P_{j+J} h \right) P_{<j-1} g \\
\label{beaver2} & \sum_{j} P_{<j-1} \left(P_{j+J} f P_{j+J} h \right) P_j g \\
\label{beaver3} & \sum_{j} P_j \left(P_{j+J} f P_{j+J} h \right) P_j g
\end{align}
\end{subequations}
since the desired result follows then upon summation over $J$. The estimate relies on the Littlewood-Paley square and maximal function estimates (Theorem~\ref{LP}), and on the vector valued maximal function estimate $\left\| \left( \sum_j \left[ M f_j \right]^2 \right)^{1/2} \right\|_p \lesssim \|\left( \sum_j f_j^2 \right)^{1/2} \|_p$ (see Stein~\cite{stein}, chapter II). This gives for~(\ref{beaver1})
\begin{equation*}
\begin{split}
\left\| (\ref{beaver1}) \right\|_p & \lesssim \left\| \left( \sum_j \left[ P_j \left(P_{j+J} f P_{j+J} h \right) P_{<j-1} g \right]^2 \right)^{1/2} \right\|_p \lesssim \left\| Mg \left( \sum_j \left[ P_j \left(P_{j+J} f P_{j+J} h \right)\right]^2 \right)^{1/2} \right\|_p \\
& \lesssim \left\| Mg \right\|_{p_2} \left\| \left(\sum_j \left[ P_j \left(P_{j+J} f P_{j+J} h \right)\right]^2 \right)^{1/2} \right\|_{\frac{pp_2}{p-p_2}} \lesssim \left\| g \right\|_{p_2} \left\| \left(\sum_j \left[ M \left(P_{j+J} f P_{j+J} h \right) \right]^2 \right)^{1/2} \right\|_{\frac{pp_2}{p-p_2}} \\
& \lesssim \left\| g \right\|_{p_2} \left\| \left( \sum_j \left[ P_{j+J} f P_{j+J} h \right]^2 \right)^{1/2} \right\|_{\frac{pp_2}{p-p_2}} \lesssim \left\| g \right\|_{p_2} \left\| M f \left( \sum_j \left[ P_{j+J} h \right]^2 \right)^{1/2} \right\|_{\frac{pp_2}{p-p_2}}\\
&  \lesssim \left\| g \right\|_{p_2} \left\| M f \right\|_{p_1} \left\| S h \right\|_{p_3} \lesssim  \left\| g \right\|_{p_2} \left\| f \right\|_{p_1} \left\| h \right\|_{p_3}.
\end{split}
\end{equation*}
The terms~(\ref{beaver2}) and~(\ref{beaver3}) can be estimated similarly.
\end{proof}

\section{Analysis of a class of  bilinear pseudo-product operators}
\label{appendquad}
In this section, we define new classes of bilinear pseudo-product operators, which occur in the analysis of the water wave problem; then, we prove boundedness of these operators.

\subsection*{Definition of the classes $\mathcal{B}_s$ and $\widetilde{\mathcal{B}_s}$}

Before defining these classes, it will be convenient to think of a symbol in a more symmetric way than we have been doing so far. The motivation is the following: in order to  prove a bound  for the bilinear operator $B_m(f,g)$ in $L^r$ one uses a duality argument and proves that for $h \in L^{r'}$, we have $\int B_m(f,g) h dx < \infty$; this shows that the Fourier variables $\eta,\xi-\eta,-\xi $  play symmetric roles. 

Given a symbol $m(\xi,\eta)$, we can as well write it $m'(\xi,\xi-\eta)\overset{def}{=}m(\xi,\eta)$ or more generally as a function of two of the three variables $(\eta,\xi-\eta,-\xi)$. By an abuse of notation, we will denote indifferently $m$ for all these symbols.
Thus, denoting $(\xi_1,\xi_2,\xi_3)$ for the three Fourier variables $(\eta,\xi-\eta,-\xi)$, and picking two indices $i$ and $j$, we can always write $m = m(\xi_i,\xi_j)$.

\begin{defi}
A symbol $m$ belongs to the class $\mathcal{B}_s$ if
\begin{itemize}
\item It is homogeneous of degree $s$.
\item It is smooth outside of $\{ \xi_1 = 0 \} \cup \{ \xi_2 = 0 \}\cup \{ \xi_3 = 0 \}$.
\item For $i=1,2,3$, if $|\xi_i| << |\xi_{i+1}|, |\xi_{i+2}| \sim 1$, it is possible to write $m = \mathcal{A}\left( |\xi_i|^{1/2} , \frac{\xi_i}{|\xi_i|} , \xi_{i+1} \right)$.
\end{itemize}
(where we use the convention that $\xi_4 = \xi_1$ and $\xi_5 = \xi_2 $). 
\end{defi}

Thus, roughly speaking, symbols in $\mathcal{B}_0$ are of Coifman-Meyer type except along the coordinate axes, where they are allowed to have a singularity like Mihlin-H\"ormander (linear!) multipliers.

Notice that given the boundedness properties of Mihlin-H\"ormander multipliers and Coifman-Meyer operators, Theorem~\ref{bilinearbound} of next section, which gives boundedness of bilinear operators with symbols in $\mathcal{B}_0$, should be no surprise.

We now define a new class of symbols, which corresponds to bilinear operators of paraproduct type.

\begin{defi}
A symbol $m$ belongs to the class $\widetilde{\mathcal{B}}_s$ if
\begin{itemize}
\item It belongs to $\mathcal{B}_s$.
\item It satisfies the following support property: $\operatorname{Supp} m(\xi,\eta) \subset \{ |\eta| \gtrsim |\xi| \}$. 
\end{itemize}
\end{defi}

The interest of the class $\widetilde{\mathcal{B}}_s$ is the following: taking derivatives of $B_\mu(f,g)$ corresponds to multiplying it by $\xi$ in Fourier space.  If $m\in\widetilde{\mathcal{B}}_s$  then the support restriction on $m$ means that $\xi$ is always dominated by $\eta$. Thus one expects (see next section for a precise formulation) something like $|\nabla^k B_m(f,g)| \lesssim |\nabla^k f| |g|$.

\subsection*{Calculus with symbols in $\mathcal{B}_s$ and $\widetilde{\mathcal{B}}_s$}

We begin with the action of derivatives on $\mathcal{B}_s$.

\begin{lemma}
\label{derivbs}
(i) If $\mu \in \mathcal{B}_s$, one can write
\begin{equation*}
\partial_\xi \mu(\eta,\xi) = \mu^1 + \frac{1}{|\xi|} \mu^2 + \frac{1}{|\xi-\eta|} \mu^3 \quad\mbox{with $(\mu^1, \mu^2,\mu^3) \in \mathcal{B}_{s-1} \times \mathcal{B}_s \times \mathcal{B}_s$}
\end{equation*}
(ii) If $\mu \in \mathcal{B}_s$ and $\nu \in \mathcal{B}_{s'}$, then $\mu \nu \in \mathcal{B}_{ss'}$.
\end{lemma}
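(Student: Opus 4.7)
The plan is to verify both statements directly from the three defining properties of $\mathcal{B}_s$ (homogeneity, smoothness off the three axes $\{\xi_i=0\}$ with $(\xi_1,\xi_2,\xi_3)=(\eta,\xi-\eta,-\xi)$, and the prescribed local representation near each axis). I would set up a smooth partition of unity on the sphere adapted to the four regions: one away from all three axes, and one near each of $\{\eta=0\}$, $\{\xi=0\}$, $\{\xi-\eta=0\}$. Homogeneity then extends everything off the sphere.

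For part (ii), note first that the stated degree should read $s+s'$. Homogeneity of degree $s+s'$ is immediate, and a product of two functions each smooth outside $\{\xi_1=0\}\cup\{\xi_2=0\}\cup\{\xi_3=0\}$ is again smooth there. Near $\{\xi_i=0\}$ the representations $\mu=\mathcal{A}(|\xi_i|^{1/2},\xi_i/|\xi_i|,\xi_{i+1})$ and $\nu=\mathcal{A}'(|\xi_i|^{1/2},\xi_i/|\xi_i|,\xi_{i+1})$ multiply to give a function of the same form with amplitude $\mathcal{A}\mathcal{A}'$, still smooth in its three arguments. So $\mu\nu\in\mathcal{B}_{s+s'}$.

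For part (i), since $\xi_1=\eta$ does not depend on $\xi$, differentiation in $\xi$ creates new singularities only near $\{\xi=0\}$ and $\{\xi-\eta=0\}$. Away from all axes, $\mu$ is smooth and homogeneous of degree $s$, so $\partial_\xi\mu$ is smooth and homogeneous of degree $s-1$ and contributes to $\mu^1\in\mathcal{B}_{s-1}$. Near $\{\eta=0\}$ the representation $\mu=\mathcal{A}(|\eta|^{1/2},\eta/|\eta|,\xi)$ is smooth in the $\xi$-variable, so $\partial_\xi\mu=\partial_3\mathcal{A}(|\eta|^{1/2},\eta/|\eta|,\xi)$, which again fits the $\mathcal{B}_{s-1}$ template and is absorbed into $\mu^1$. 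Near $\{\xi=0\}$, differentiating $\mu=\mathcal{A}(|\xi|^{1/2},\xi/|\xi|,\eta)$ gives
\begin{equation*}
\partial_\xi\mu = (\partial_1\mathcal{A})\,\frac{\xi}{2|\xi|^{3/2}} + (\partial_2\mathcal{A})\cdot\nabla_\xi\Big(\tfrac{\xi}{|\xi|}\Big),
\end{equation*}
whose worst singularity is of order $1/|\xi|$; factoring this out we obtain $\frac{1}{|\xi|}\mu^2$ with $\mu^2$ of the form $|\xi|^{1/2}\mathcal{A}''(|\xi|^{1/2},\xi/|\xi|,\eta)$, which one checks satisfies the local representation of a $\mathcal{B}_s$ symbol near $\{\xi=0\}$. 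An entirely analogous computation using $\mu=\mathcal{A}(|\xi-\eta|^{1/2},(\xi-\eta)/|\xi-\eta|,\eta)$ near $\{\xi-\eta=0\}$ produces the term $\frac{1}{|\xi-\eta|}\mu^3$ with $\mu^3\in\mathcal{B}_s$.

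The main obstacle is bookkeeping: after pulling out the explicit factors $1/|\xi|$ and $1/|\xi-\eta|$ near the axes where they arise, one must verify that the resulting symbols $\mu^2$, $\mu^3$ defined by a partition of unity still satisfy the $\mathcal{B}_s$ local representations near \emph{every} axis, not just the one each was built from. This is handled by remarking that away from the axis in question, the auxiliary factors $1/|\xi|$ or $1/|\xi-\eta|$ are themselves smooth and homogeneous, so the remainder symbols inherit the correct structure from $\partial_\xi\mu$ (smooth of degree $s-1$ there) together with part (ii) applied to the factor $|\xi|$ or $|\xi-\eta|$ that absorbs them.
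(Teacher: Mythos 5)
Your proof is correct and follows essentially the same route as the paper, whose (very brief) argument likewise splits according to which of $\eta$, $\xi$, $\xi-\eta$ is the smallest variable and differentiates the local representation of $\mu$ in each regime, assigning the smooth contribution to $\mu^1$ and the $\frac{1}{|\xi|}$, $\frac{1}{|\xi-\eta|}$ singularities to $\mu^2$, $\mu^3$. Your observation that the degree in (ii) should be $s+s'$ (the paper's subscript $ss'$ is a slip) is also right.
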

\begin{proof}
(ii) follows from the definition of $\mathcal{B}_s$.  To prove (i) one notes that $\partial_\xi \mu$ contributes $\mu_1$ if $|\eta|$ is the smallest variable,  it contributes $\frac{\mu_2}{|\xi|}$  if $|\xi|$ is the smallest variable, and  it contributes $\frac{\mu_3}{|\xi-\eta|}$ if $|\xi - \eta|$ is the smallest variable.
\end{proof}

Finally, the following theorem gives the crucial boundedness properties of operators with symbols in $\mathcal{B}_s$ and $\widetilde{\mathcal{B}}_s$.

\begin{theorem}
\label{bilinearbound}
(i) If $m$ belongs to the class $\mathcal{B}_0$,
$$
\left\| B_m (f,g) \right\|_r \lesssim \|f\|_p \|g\|_q \quad\mbox{if $\frac{1}{p}+\frac{1}{q} = \frac{1}{r}$ and $1 < p,q,r < \infty$}
$$

(ii) If $m$ belongs to the class $\widetilde{\mathcal{B}}_s$, and if $k$ is an integer,
$$
\left\| \nabla^k B_m (f,g) \right\|_r \lesssim \|\Lambda^{k+s} f\|_p \|g\|_q \quad\mbox{if $\frac{1}{p}+\frac{1}{q} = \frac{1}{r}$ and $1 < p,q,r < \infty$}
$$
\end{theorem}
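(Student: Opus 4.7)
The plan is to handle the two parts separately, and (ii) will follow quickly from (i) once the trade of $\xi$-derivatives for frequency weights on $f$ is made.

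For (i), I would use a smooth homogeneous partition of unity to split $m = m_0 + m_1 + m_2 + m_3$, where $m_0$ is supported away from the three coordinate hyperplanes $\{\eta=0\}$, $\{\xi-\eta=0\}$, $\{\xi=0\}$, and each $m_i$ ($i=1,2,3$) is supported near one of them. On the support of $m_0$, homogeneity and smoothness force the Coifman--Meyer estimate (\ref{butterfly}), so Theorem~\ref{CoifmanMeyer} gives the desired bound. By the symmetry between the three variables $\eta$, $\xi-\eta$, $-\xi$, it suffices to treat $m_1$, supported where $|\eta| \ll |\xi-\eta| \sim |\xi|$; there the structural hypothesis gives $m_1 = \mathcal{A}(|\eta|^{1/2}, \eta/|\eta|, \xi)$.

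To handle $m_1$, I would perform a dyadic Littlewood--Paley decomposition in $\eta$, writing $m_1 = \sum_{j<0} m_1^{(j)}$ where the support of $m_1^{(j)}$ forces $|\eta|\sim 2^j$ and $|\xi|\sim 1$. On this support the parameter $|\eta|^{1/2} \approx 2^{j/2}$ and the angular variable $\eta/|\eta|$ lives on the (compact) sphere, while $\xi$ lies in a fixed compact annulus; hence one can expand $\mathcal{A}$ in spherical harmonics in $\eta/|\eta|$ and Fourier series in $\xi$, obtaining a rapidly convergent (in the expansion indices, thanks to smoothness of $\mathcal{A}$) sum of tensor products $\varphi_\alpha(\eta)\,\psi_\beta(\xi)$ with $\varphi_\alpha$ a Mihlin--H\"ormander multiplier on the dyadic scale $2^j$ and $\psi_\beta$ a Coifman--Meyer symbol in $\xi$. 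The corresponding piece of $B_{m_1^{(j)}}(f,g)$ becomes an ordinary product $\varphi_\alpha(D) P_j f \cdot \psi_\beta(D)\widetilde{P}_j g$ (where $\widetilde{P}_j$ is a fattened frequency projector adapted to $|\xi|\sim 1$, which we further dualize into a Littlewood--Paley summation at scale $1$). Summing over the dyadic scale $j$ is then controlled via the Littlewood--Paley square function and Hardy--Littlewood maximal estimates of Theorem~\ref{LP}, yielding
$$
\|B_{m_1}(f,g)\|_r \lesssim \|Sf\|_p\,\|Mg\|_q \lesssim \|f\|_p\,\|g\|_q .
$$

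For (ii), the key reduction is that on $\operatorname{Supp}(m)\subset\{|\eta|\gtrsim|\xi|\}$ every $\xi$-factor can be converted into an $|\eta|$-factor. For a monomial $\xi^\alpha$ with $|\alpha|=k$, I would set
$$
\tilde m(\xi,\eta) \overset{def}{=} \frac{\xi^\alpha}{|\eta|^{k+s}}\,m(\xi,\eta) ,
$$
which is homogeneous of degree $0$. On the support of $m$, the factor $\xi^\alpha/|\eta|^{k+s}$ is bounded and smooth off the set $\{\eta=0\}\cup\{\xi=0\}\cup\{\xi-\eta=0\}$, and a case-by-case inspection near each axis (using $|\xi|\lesssim|\eta|$, which excludes the only potentially dangerous region $|\eta|\ll|\xi|\sim 1$) shows that the resulting product inherits the axis structure required for membership in $\mathcal{B}_0$. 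Consequently
$$
\nabla^\alpha B_m(f,g) = B_{\tilde m}(\Lambda^{k+s}f,\,g),
$$
and (i) delivers the claimed bound. Summing over monomials of total degree $k$ finishes the proof.

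The main obstacle is the scale-by-scale expansion in the proof of (i): one must verify that the coefficients in the spherical/Fourier expansion of $\mathcal{A}$ decay fast enough to absorb the (polynomially growing) Mihlin--H\"ormander operator norms of $\varphi_\alpha(D)$, and that everything can be summed in the dyadic index $j$ via square-function technology rather than naively (which would diverge). Once this expansion is executed, (ii) is essentially a bookkeeping step.
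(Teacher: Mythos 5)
Your proposal follows essentially the same route as the paper: part (ii) is identical (rewrite $\nabla^\alpha B_m(f,g)=B_{\xi^\alpha m}(f,g)=B_{\frac{\xi^\alpha}{|\eta|^{k+s}}m}(\Lambda^{k+s}f,g)$ and use the support condition to place the new symbol in $\mathcal{B}_0$), and part (i) has the same architecture: Coifman--Meyer away from the axes, reduction by symmetry/duality to the cone $|\eta|\ll|\xi|$, separation of variables by a series expansion whose rapidly decaying coefficients offset the polynomially growing Mihlin--H\"ormander norms of the angular multipliers, and Littlewood--Paley square/maximal function estimates to sum the scales.

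Two linked points in your execution of (i) need repair, though neither changes the architecture. First, the normalization ``$|\eta|\sim 2^j$ and $|\xi|\sim 1$'' is not available: the region near $\{\eta=0\}$ is a cone, so you must either decompose dyadically in \emph{both} frequencies and use homogeneity to get bounds uniform over the output scale (this is how the paper's proof of Theorem~\ref{theoflag} proceeds, with Fourier series on each block), or, as the paper does here, expand directly in the homogeneity-invariant ratio $|\eta|^{1/2}/|\xi|^{1/2}$ together with spherical harmonics in $\eta/|\eta|$ and $\xi/|\xi|$, which makes the resulting linear multipliers scale-independent. Second, and as a consequence of the first, your concluding bound $\|B_{m_1}(f,g)\|_r\lesssim\|Sf\|_p\,\|Mg\|_q$ puts the square function on the wrong factor: once the decomposition is done correctly, in this cone $f$ carries the low frequency $\eta$ while $g$ and the output sit at the comparable high frequency, so the summands of the scale sum are frequency-localized at the scale of $g$; Theorem~\ref{LP}(i) then forces the square function onto $g$ and the supremum (maximal function) onto the low-frequency pieces of $f$, exactly as in the paper's chain ending with $\bigl\|\sup_j 2^{-jk/2}\bigl|P_{<j-100}\Lambda^{k/2}f\bigr|\bigr\|_p\,\|Sg\|_q$. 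As literally written, your intermediate step would require square-summing the low-frequency pieces of $f$, which is not controlled by $Sf$. With these two adjustments your argument coincides with the paper's proof.
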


\begin{remark} Since Riesz transforms are not bounded on $L^\infty$, the statement of the above theorem becomes wrong if any of the indices $p,q,r$ is $\infty$. We need to go around this difficulty when deriving estimates for the water wave system, and this unfortunately makes estimates a bit longer.
\end{remark}

\begin{proof} To begin with $(i)$, let $m$ be a symbol in $\mathcal{B}_0$. Away from $\{ \xi_1 = 0 \} \cup \{ \xi_2 = 0 \}\cup \{ \xi_3 = 0 \}$, the Coifman-Meyer theorem~\cite{coifmanmeyer} can be applied to $m$ and gives the desired result. We will simply consider the case $|\eta|<< |\xi|$ (the other cases can be reduced to this one by duality), thus we consider in the following
$$
B'(f,g) \overset{def}{=} \sum_{j} B_{m} \left( P_{<j-100} f, P_j g \right) .
$$
By definition of the class $\mathcal{B}_0$, $m$ can be written in the set $|\eta| << |\xi|$ as $m(\xi,\eta) = \mathcal{A} \left( |\eta|^{1/2},\frac{\eta}{|\eta|},\xi \right)$ or, by homogeneity, $m(\xi,\eta) = \mathcal{A} \left( \frac{|\eta|^{1/2}}{|\xi|^{1/2}},\frac{\eta}{|\eta|},\frac{\xi}{|\xi|} \right)$. Expanding this expression in $\frac{|\eta|^{1/2}}{|\xi|^{1/2}}$ yields
$$
m (\xi,\eta) = \sum_{k=1}^M \frac{|\eta|^{k/2}}{|\xi|^{k/2}} m_k \left( \frac{\eta}{|\eta|}, \frac{\xi}{|\xi|} \right)\quad\mbox{+ remainder}.
$$
First notice that the $m_k$ are smooth and homogeneous of degree 0. Second, observe that for $M$ big enough, the singularity of the remainder at $\eta =0$ becomes so weak that the remainder satisfies estimates of Coifman-Meyer type; thus we take $M$ big enough and forget about the remainder.
Expanding $m_k$ in spherical harmonics (which we denote $(Z_\ell)_{\ell \in \mathbb{N}}$) leads to
$$
m (\xi,\eta) = \sum_{k=1}^M \sum_{\ell,\ell' \in \mathbb{N}} \alpha_{k,\ell,\ell'} \frac{|\eta|^{k/2}}{|\xi|^{k/2}} Z_\ell \left( \frac{\eta}{|\eta|} \right) Z_{\ell'} \left( \frac{\xi}{|\xi|} \right) .
$$
By the Mihlin-H\"ormander multiplier theorem, the operators $Z_\ell \left( \frac{D}{\Lambda} \right)$ are bounded on Lebesgue spaces, with bounds growing polynomially in $\ell$; on the other hand, the smoothness of $m$ entails fast decay in $(\ell,\ell')$ of the coefficients $\alpha_{k,\ell,\ell'}$. In the end, we can thus ignore the summation over $\ell$, $\ell'$ and $k$, and matters reduce to
$$
\sum_j \Lambda^{-k/2} \left[ P_{<j-100}\Lambda^{k/2} f P_j g \right] .
$$
It is then routine to get the desired estimate using slight extensions of the Littlewood-Paley square and maximal function estimates (Theorem~\ref{LP})
\begin{equation}
\begin{split}
& \left\| \sum_j \Lambda^{-k/2} \left[ P_{<j-100}\Lambda^{k/2} f P_j g \right] \right\|_r 
\lesssim \left \|\left(\sum\nolimits_{j} 2^{-jk} ( P_{<j-100}\Lambda^{k/2} f P_j g )^2 \right)^{\frac12} \right\|_r \\
& \;\;\;\;\;\;\;\;\;\;\;\;\;\;\;\;\;\;\;\lesssim \left\| \sup\nolimits_j \left| 2^{-jk/2} P_{<j-100}\Lambda^{k/2} f (x) \right| \left( \sum\nolimits_j (P_j g)^2 \right)^{1/2} \right\|_r \\
& \;\;\;\;\;\;\;\;\;\;\;\;\;\;\;\;\;\;\;\lesssim \left\| \sup\nolimits_j 2^{-jk/2} P_{<j-100}\Lambda^{k/2} f \right\|_p \left\| \left( \sum\nolimits_j (P_j g)^2 \right)^{1/2} \right\|_q \lesssim \|f\|_p \|g\|_q .
\end{split}
\end{equation}

\bigskip

In order to prove $(ii)$, consider a symbol $m$ in $\widetilde{\mathcal{B}}_s$, and simply observe that
$$
\nabla^k B_m(f,g) = B_{\xi^k m} (f,g) = B_{\frac{\xi^k}{|\eta|^{k+s}} m} (\Lambda^{k+s} f,g) ,
$$
Due to the support condition satisfied by $m$, the symbol $\frac{\xi^k}{|\eta|^{k+s}} m$ belongs to $\mathcal{B}^0$, therefore applying $(ii)$ gives
$$
\left\| \nabla^k B_m(f,g) \right\|_r = \left\| B_{\frac{\xi^k}{|\eta|^{k+s}} m} (\Lambda^{k+s} f,g) \right\|_r \lesssim \left\| \Lambda^{k+s} f \right\|_p \left\| g \right\|_q .
$$
\end{proof}

\section{Analysis of a class of trilinear pseudo-product operators}

\label{appendcub}

In this section, we turn to the trilinear operators which occur in the analysis of the water-wave problem: we define new classes of symbols adapted to them, and prove their boundedness.

\subsection*{Definition of the classes $\mathcal{T}_s$ and $\widetilde{\mathcal{T}_s}$}

As in the quadratic case, it will be convenient to put on an equal footing the four Fourier variables $\xi,\eta,\sigma,\xi-\eta-\sigma$. We adopt the following convention: 
\begin{enumerate}
\item First, we call $a_1 = -\xi$, $a_2 = \eta$, $a_3 = \sigma$, $a_4 = \xi-\eta-\sigma$.
\item Then, we partition the $(\xi,\eta,\sigma)$ space into regions where the $(|a_i|)$ are essentially ordered (in other words, for each of these regions there is a permutation $\sigma$ such that $|a_{\sigma(1)}| \lesssim |a_{\sigma(2)}| \lesssim |a_{\sigma(3)}| \lesssim |a_{\sigma(4)}|$.
\item Finally, we set $\xi_i = a_{\sigma(i)}$.
\end{enumerate}
In other words, $(\xi_i)$ is a convenient labeling of $-\xi$, $\eta$, $\sigma$, $\xi-\eta-\sigma$ since it satisfies $|\xi_1|\lesssim |\xi_2| \lesssim |\xi_3| \lesssim |\xi_4|$ and  $\xi_1 + \xi_2 + \xi_3 + \xi_4=0$ .

As in the quadratic case, we abuse notations by denoting indifferently $m$ for the symbol $m(\xi,\eta,\sigma)$ or its expression in any coordinate system, for instance $m(\xi_1,\xi_2,\xi_3)$.

\begin{defi} 
A symbol $m$ belongs to the class $\mathcal{T}_s$ if
\begin{itemize}
\item $m$  is homogeneous of degree $s$.
\item $m$  is smooth outside a conical neighborhood  $O_{ij}\supset \{ \xi_1 = 0 \} \cup\{ \xi_i + \xi_j = 0 \}$ for $(i,j)=(1,2), (1,3)$ or $(2,3)$.
\item   For   $|\xi_1| \sim |\xi_2|<<|\xi_3|\sim 1$, and $|\xi_1 + \xi_2| \sim |\xi_1|$ if $(i,j)=(1,2)$, $m = \mathcal{A}\left(\xi_1,\xi_2 \right) \mathcal{A}\left(\xi_1,\xi_2,\xi_3 \right)$ (flag singularity).
\item For $|\xi_1| << |\xi_2|, |\xi_3|, |\xi_4| \sim 1$,  $m = \mathcal{A}\left( |\xi_1|^{1/2} , \frac{\xi_1}{|\xi_1|} , \xi_2,\xi_3 \right)$.
\item For $|\xi_i+\xi_j|<< |\xi_1|, |\xi_2|, |\xi_3|, |\xi_4| \sim 1$,  $m = \mathcal{A}\left( |\xi_i+\xi_j|^{1/2} , \frac{\xi_i+\xi_j}{|\xi_i+\xi_j|} , \xi_1,  \xi_2,\xi_3 \right)$.
\item For $|\xi_1|<<|\xi_2|<<|\xi_3|,|\xi_4|\sim 1$,  $m = \mathcal{A}\left(\frac{|\xi_1|^{1/2}}{|\xi|_2^{1/2}},\frac{\xi_1}{|\xi_1|},|\xi_2|^{1/2},\frac{\xi_2}{|\xi_2|},\xi_3 \right)$.
\item For $|\xi_1 + \xi_2|<<|\xi_1|<<|\xi_3|,|\xi_4|\sim 1$,  $m = \mathcal{A} \left(\frac{|\xi_1+\xi_2|^{1/2}}{|\xi_1|^{1/2}},\frac{\xi_1+\xi_2}{|\xi_1+\xi_2|},|\xi_1|^{1/2},\frac{\xi_1}{|\xi_1|},\xi_3 \right)$.
\end{itemize} 
\end{defi}

\begin{remark}
One should think of symbols in $\mathcal{\mathcal{T}}_0$ as being of Coifman-Meyer type, except that they might exhibit flag singularities, and they are allowed to have singularities like Mihlin-H\"ormander (linear!) multipliers along the coordinate axes, and along one axis corresponding to the sum of two coordinates being zero. 

Though this definition is quite involved, it somehow corresponds to the simplest class containing all the symbols which occur in the analysis of the water waves problem. For instance, symbols of the type $\frac{m_k(\xi,\eta)}{\phi_{\pm,\pm}(\xi,\eta)}m_j(\xi-\eta,\sigma)$ (which appear in Section 6) contribute singularities of the type $\frac{\xi}{|\xi|}$ if one of the coordinates vanishes, flag singularities, and singularities along one axis $\xi_i + \xi_j$. This already accounts for nearly all points of the above definition.

How does one prove that these symbols are associated to bounded operators (Theorem~\ref{trilinearbound})? It is important in the proof that symbols in $\mathcal{T}_0$ can, by power or Fourier series expansions, be reduced to tensorial products of functions of one $\xi_i$ only; in other words it is possible to separate variables. Notice that symbols of the form (for instance) $\frac{\xi}{|\xi|} \frac{\eta}{|\eta|}$ could not be treated by such a method; but such a behavior is not possible in the class $\mathcal{T}_s$.
\end{remark}
 
We next define a new class of symbols, which somehow corresponds to paraproduct operators.

\begin{defi}
A symbol $m$ belongs to the class $\widetilde{\mathcal{T}}_s$ if
\begin{itemize}
\item It belongs to $\mathcal{T}_s$.
\item It satisfies the following support property: $\operatorname{Supp} m(\xi,\eta,\sigma) \subset \{ |\sigma| \gtrsim |\xi|,|\eta| \}$. 
\end{itemize}
\end{defi}

\subsection*{Calculus with symbols in $\mathcal{T}_s$ and $\widetilde{\mathcal{T}_s}$}

We begin with the action of derivatives on $\mathcal{T}_s$.

\begin{lemma}
\label{derivts}
(i) If $\mu \in \mathcal{T}_s$, one can write
\begin{equation*}
\begin{split}
& \partial_\xi \mu(\xi,\eta,\sigma) = \mu^1 + \frac{1}{|\xi|} \mu^2 + \frac{1}{|\xi-\eta|} \mu^3 + \frac{1}{|\xi-\sigma|} \mu^4 + \frac{1}{|\xi-\eta-\sigma|} \mu^5 \\
\end{split}
\end{equation*}
with $(\mu^1, \mu^2,\mu^3,\mu^4,\mu^5) \in \mathcal{T}_{s-1} \times \mathcal{T}_s \times \mathcal{T}_s \times \mathcal{T}_s \times \mathcal{T}_s$.

\medskip

(ii) If $\mu \in \mathcal{T}_s$ and $\nu \in \mathcal{T}_{s'}$, then $\mu \nu \in \mathcal{T}_{ss'}$.
\end{lemma}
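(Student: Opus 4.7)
The plan is to prove (ii) by direct inspection and (i) by a region-by-region computation using a smooth partition of unity subordinate to the singular stratification of $\mathcal{T}_s$.

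For (ii), homogeneity degrees are additive under pointwise product, so $\mu\nu$ is homogeneous of degree $s+s'$. In each of the singular regions listed in the definition, both $\mu$ and $\nu$ admit local representations of exactly the same $\mathcal{A}$-form with the same singular arguments; the product of two smooth functions of the same arguments is smooth in those arguments, and the flag product structure $\mathcal{A}(\xi_1,\xi_2)\,\mathcal{A}(\xi_1,\xi_2,\xi_3)$ is preserved under multiplication. Hence $\mu\nu$ lies in $\mathcal{T}$ of the appropriate (added) degree.

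For (i), the computation is driven by three elementary identities. The operator $\partial_\xi$ annihilates every variable free of $\xi$ (so $\eta$, $\sigma$, $\eta\pm\sigma$, $|\eta|^{1/2}$, $\eta/|\eta|$, etc.\ are all inert), while for each $a$ in the four-element set $\{\xi,\xi-\eta,\xi-\sigma,\xi-\eta-\sigma\}$ one has
\[
\partial_\xi a = \pm\,\mathrm{Id},\qquad \partial_\xi |a|^{1/2} \;=\; \frac{1}{|a|}\cdot \tfrac{1}{2}\,|a|^{1/2}\,\tfrac{a}{|a|},\qquad \partial_\xi\tfrac{a}{|a|} \;=\; \frac{1}{|a|}\Bigl(\mathrm{Id}-\tfrac{a\otimes a}{|a|^2}\Bigr).
\]
Applied via the Leibniz rule to the local $\mathcal{A}$-representation of $\mu$ in a stratum where $|a|$ is the small variable, these identities produce terms of two kinds: those where $\partial_\xi$ falls on a smooth slot of $\mathcal{A}$, producing an expression of homogeneous degree $s-1$ with all original singular arguments preserved, hence in $\mathcal{T}_{s-1}$ (absorbed into $\mu^1$); and those where it falls on $|a|^{1/2}$ or $a/|a|$, each of which pulls out a clean $\frac{1}{|a|}$ prefactor and leaves behind an expression still of $\mathcal{A}$-form and homogeneous of degree $s$ in the remaining variables, hence in $\mathcal{T}_s$. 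These latter contributions go into $\mu^{2,3,4,5}$, the index being chosen by which $a$ is involved.

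Summing over the four choices of $a$, over the strata of the partition of unity, and collecting terms by their $\frac{1}{|a|}$ prefactor yields the stated decomposition. The main (though entirely routine) point to check is the flag region $|\xi_1|\sim|\xi_2|\ll|\xi_3|\sim 1$ with $|\xi_1+\xi_2|\sim|\xi_1|$: there one differentiates a factored representation $\mathcal{A}(\xi_1,\xi_2)\,\mathcal{A}(\xi_1,\xi_2,\xi_3)$ and must verify that applying $\partial_\xi$ to either factor preserves the flag structure up to the permitted axis and $\xi_i+\xi_j$ singularities. This follows from the same three identities above, applied once to each factor and then re-expressed using $\partial_\xi\xi_i = \pm\mathrm{Id}$; the bookkeeping here is the only mildly delicate step in the whole argument.
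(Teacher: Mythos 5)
Your argument is correct and is essentially the paper's: the paper simply states that the lemma ``follows from the definition of $\mathcal{T}_s$'', and your region-by-region Leibniz computation (derivatives hitting smooth slots give the $\mathcal{T}_{s-1}$ piece, derivatives hitting $|a|^{1/2}$ or $a/|a|$ for $a\in\{\xi,\xi-\eta,\xi-\sigma,\xi-\eta-\sigma\}$ pull out the corresponding $1/|a|$ prefactor) is exactly the argument the paper spells out for the bilinear analogue in Lemma~\ref{derivbs}. The only content you omit is the paper's extra remark that, since a given $\mu$ is singular near only one of the sets $\{\xi_i+\xi_j=0\}$, one of $\mu^3$, $\mu^4$ can in fact be taken to vanish, but this is not needed for the statement as written.
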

\begin{proof} Follows from the definition of $\mathcal{T}_s$. Actually, due to the fact that 
$\mu$ is smooth outside a neighborhood of $O_{ij}$  for $(i,j)=(1,2), (1,3)$ or $(2,3)$, we also 
have that $\mu_3=0 $ or $\mu_4 = 0$. 
\end{proof}
Finally, the following theorem gives the crucial boundedness properties of operators with symbols in $\mathcal{T}_s$ and $\widetilde{\mathcal{T}}_s$.
 
\begin{theorem}
\label{trilinearbound}
(i) If $m$ belongs to the class $\mathcal{T}_0$,
\begin{equation*}
\begin{split}
& \left\| B_m (f_1,f_2,f_3) \right\|_r \lesssim \|f_1\|_{p_1} \|f_2\|_{p_2} \|f_3\|_{p_3} \quad\mbox{if $\frac{1}{p_1}+\frac{1}{p_2}+\frac{1}{p_3} = \frac{1}{r}$ and $1 < p_1,p_2,p_3,r < \infty$} \\
& \left\| B_m (f_1,f_2,f_3) \right\|_{\dot{B}^0_{1,\infty}} \lesssim \|f_1\|_{p_1} \|f_2\|_{p_2} \|f_3\|_{p_3} \quad\mbox{if $\frac{1}{p_1}+\frac{1}{p_2}+\frac{1}{p_3} = 1$.}
\end{split}
\end{equation*}
(ii) If $m$ belongs to the class $\widetilde{\mathcal{T}}_s$, and if $k$ is an integer,
\begin{equation*}
\begin{split}
& \left\| \nabla^k B_m (f_1,f_2,f_3) \right\|_r \lesssim \|\Lambda^{k+s} f_1\|_{p_1} \|f_2\|_{p_2} \|f_3\|_{p_3} \quad\mbox{if $\frac{1}{p_1}+\frac{1}{p_2}+\frac{1}{p_3} = \frac{1}{r}$ and $1 < p_1,p_2,p_3,r < \infty$} \\
& \left\| \nabla^k B_m (f_1,f_2,f_3) \right\|_{\dot{B}^0_{1,\infty}} \lesssim \|\Lambda^{k+s} f_1\|_{p_1} \|f_2\|_{p_2} \|f_3\|_{p_3} \quad\mbox{if $\frac{1}{p_1}+\frac{1}{p_2}+\frac{1}{p_3} = 1$ \mbox{and} $1 < p_1,p_2,p_3 < \infty$.}
\end{split}
\end{equation*}
\end{theorem}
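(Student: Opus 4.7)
The plan is to reduce the theorem, region by region in frequency space, either to the Coifman--Meyer theorem (Theorem~\ref{CoifmanMeyer}), to the flag singularity result (Theorem~\ref{theoflag}), or to a separation-of-variables argument in the spirit of the proof of Theorem~\ref{bilinearbound}. First, I would partition the support of $m$ according to the essential ordering $|\xi_1|\lesssim|\xi_2|\lesssim|\xi_3|\lesssim|\xi_4|$, using a smooth homogeneous partition of unity, and in each such region isolate the neighborhoods of the axes $\{\xi_1=0\}$ and $\{\xi_i+\xi_j=0\}$ where the symbol fails to be smooth. Outside these neighborhoods $m$ is homogeneous of degree $s$ and smooth, hence of Coifman--Meyer type (after absorbing a factor $|\xi|^s$), and the desired strong-type bound follows directly.

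The second step treats the coordinate axis regions. In the region $|\xi_1|<<|\xi_2|,|\xi_3|,|\xi_4|\sim 1$ I would use the defining representation $m=\mathcal{A}(|\xi_1|^{1/2},\xi_1/|\xi_1|,\xi_2,\xi_3)$, expand in powers of $|\xi_1|^{1/2}/|\xi_2|^{1/2}$ (keeping a remainder of high order whose singularity is Coifman--Meyer), and then expand the angular factor $\mathcal{A}(\cdot,\xi_1/|\xi_1|,\xi_2,\xi_3)$ in spherical harmonics $Z_\ell(\xi_1/|\xi_1|)$. Each term then takes the form
\[
\Lambda_1^{-k/2}\bigl[\,Z_\ell(D_1/\Lambda_1)\,\Lambda_1^{k/2}\bigr]\otimes \widetilde m_k(D_2,D_3),
\]
where $\widetilde m_k$ is a genuine Coifman--Meyer symbol and $Z_\ell(D/\Lambda)$ is a Mihlin--Hörmander multiplier on $L^{p_1}$ with operator norm growing polynomially in $\ell$; this polynomial growth is absorbed by the fast decay of the Fourier coefficients inherited from the smoothness of $\mathcal{A}$. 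Composing with a paraproduct/Littlewood--Paley decomposition as in the bilinear proof yields the strong-type bound. Regions of the type $|\xi_i+\xi_j|<<|\xi_1|,\dots,|\xi_4|\sim 1$ are treated identically after the change of variables $\zeta=\xi_i+\xi_j$. The nested regions $|\xi_1|<<|\xi_2|<<|\xi_3|$ and $|\xi_1+\xi_2|<<|\xi_1|<<|\xi_3|$ are handled by iterating the above expansion: first expand in the innermost ratio $|\xi_1|^{1/2}/|\xi_2|^{1/2}$ (or $|\xi_1+\xi_2|^{1/2}/|\xi_1|^{1/2}$), which separates the two smallest variables, and then apply the previous one-small-variable argument to the resulting two-scale symbol.

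The third step addresses the flag singularity, which occurs in the region $|\xi_1|\sim|\xi_2|<<|\xi_3|$ (with $|\xi_1+\xi_2|\sim|\xi_1|$); there the defining factorization $m=\mathcal{A}(\xi_1,\xi_2)\mathcal{A}(\xi_1,\xi_2,\xi_3)$ places us in the setting of Theorem~\ref{theoflag}, which provides exactly the desired strong-type bound. Putting the three steps together yields the first inequality in~(i). For the second inequality, involving $\dot B^0_{1,\infty}$, I would replace in each of the three steps the final Hölder inequality with index relation $\sum 1/p_i=1$ and retain the Littlewood--Paley localization on the output; the vector-valued and square-function estimates in Theorem~\ref{LP} still apply, yielding a uniform bound on $\|P_j B_m(f_1,f_2,f_3)\|_1$.

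For part~(ii), I would exploit the support restriction $|\sigma|\gtrsim|\xi|,|\eta|$ of $\widetilde{\mathcal{T}}_s$ to write
\[
\nabla^k B_m(f_1,f_2,f_3)=B_{\xi^k m/|\sigma|^{k+s}}\bigl(\Lambda^{k+s}f_1,f_2,f_3\bigr),
\]
and observe, using Lemma~\ref{derivts} and the explicit form of the class $\widetilde{\mathcal{T}}_s$, that the new symbol $\xi^k m/|\sigma|^{k+s}$ belongs to $\mathcal{T}_0$; applying part~(i) then completes the proof. The main obstacle is the bookkeeping in Step~2 when several variables are simultaneously small, because the symbol then exhibits two independent small parameters and one must expand in both without creating uncontrollable $1/|\xi_j|$ singularities; this is handled by always peeling off the smallest scale first, which keeps every ratio bounded by~$1$ on the region under consideration and keeps the residual symbols in $\mathcal{T}_0$.
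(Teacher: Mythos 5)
Your proposal is correct and follows essentially the same route as the paper: partition the frequency space according to the regions in the definition of $\mathcal{T}_0$, reduce the flag region to Theorem~\ref{theoflag}, separate variables by power-series and spherical-harmonics expansions (with fast-decaying coefficients offsetting the polynomial growth of the Mihlin--H\"ormander multiplier bounds) to arrive at paraproduct-type model operators handled by Theorem~\ref{LP}, obtain the $\dot B^0_{1,\infty}$ endpoint from the $L^1$-to-Besov boundedness of such multipliers, and prove (ii) by absorbing $\xi^k/|\sigma|^{k+s}$ into the symbol via the support condition, exactly as in Theorem~\ref{bilinearbound}. No essential step is missing relative to the paper's own (sketched) argument.
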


\begin{remark} The estimates above involving Besov spaces come as substitute for estimates in $L^1$, which are wrong (for instance, the Riesz transform is not bounded on $L^1$). One can get in the same way substitutes for estimates in $L^\infty$.
\end{remark}

\begin{proof} 
The proof of (i) is very similar to the one of Theorem~\ref{theoflag} and Theorem~\ref{bilinearbound}: first, partition the frequency space in order to distinguish the different regions appearing in the definition of $\mathcal{T}_s$. Then, in each of these regions, expand the symbol in order to separate variables; finally, perform the desired estimates using the Littlewood-Paley theorem.

In the region $|\xi_1| \sim |\xi_2|<<|\xi_3|\sim 1$, and $|\xi_1 + \xi_2| \sim |\xi_1|$ if $(i,j)=(1,2)$, it suffices to apply Theorem~\ref{theoflag} on symbols with flag singularities.

Let us also sketch how one deals with the region $|\xi_1|<<|\xi_2|<<|\xi_3|,|\xi_4|$. Expanding the symbol $\mathcal{A}\left(\frac{|\xi_1|^{1/2}}{|\xi|_2^{1/2}},\frac{\xi_1}{|\xi_1|},|\xi_2|^{1/2},\frac{\xi_2}{|\xi_2|},\xi_3 \right)$ in power series in $\frac{|\xi_1|^{1/2}}{|\xi_2|^{1/2}}$ and $|\xi_2|^{1/2}$; and in spherical harmonics in $\frac{\xi_1}{|\xi_1|}$ and $\frac{\xi_2}{|\xi_2|}$ gives
$$
\sum_{k,k'} \sum_{\ell,\ell'} \left( \frac{|\xi_1|}{|\xi_2|} \right)^{k/2} |\xi|_2^{k'/2} Z_\ell \left( \frac{\xi_1}{|\xi_1|} \right) Z_{\ell'} \left( \frac{\xi_2}{|\xi_2|} \right) \Phi_{kk'\ell\ell'}(\xi_3) ,
$$
where the functions $\Phi_{kk'\ell\ell'}$ are homogeneous of degree $-\frac{k'}{2}$ 
 and decay fast with the indices $\ell,\ell'$, by smoothness of $\mathcal{A}$. 

The above sum over $k,k'$ can be taken to be finite, the remainder giving a Coifman-Meyer operator; furthermore, it is easily checked that the powers $\left( \frac{|\xi_1|}{|\xi_2|} \right)^{k/2}$ and $|\xi|_2^{k'/2}$ appearing above cancel with the homogeneity of $\Phi_{kk'\ell\ell'}$; thus it is possible to ignore the sum over $k$ and $k'$.

As for the sum over $\ell,\ell'$, we rely on the fast decay of the functions $\Phi_{kk'\ell\ell'}$, which offsets the polynomial growth of the bounds (in Lebesgue spaces) of the Fourier multipliers $Z_\ell \left( \frac{\xi_1}{|\xi_1|} \right)$  and $Z_{\ell'} \left( \frac{\xi_2}{|\xi_2|} \right)$; thus it is possible to ignore the sum over $\ell$ and $\ell'$.

Finally, up to rotation of the Fourier variables, and a duality argument, it is possible to assume that $(\xi_1,\xi_2,\xi_3) = (\sigma,\eta,\xi-\eta-\sigma)$.

The above considerations lead to the model operator
$$
B(f,g,h) = \sum_{0 \leq k,k' \leq N} \sum_j \sum_{j'<j-100} P_{<j'-100} f \, P_{j'} g \, P_j h.
$$
The boundedness of this operator is easily established: it is essentially a composition of paraproducts.

\bigskip

The extension to Besov spaces based on $L^1$ follows from boundedness from $L^1$ to $\dot{B}^1_{0,\infty}$ of Mihlin-H\"ormander type Fourier multipliers.

Finally, the point $(ii)$ is proved just like for Theorem~\ref{bilinearbound}.
\end{proof}

Finally, we need the following proposition, which combines fractional integration and flag singularity:

\begin{proposition}
\label{fifs}
(i) If $m(\xi,\eta,\sigma) \in \mathcal{T}_0$, $0<\alpha<2$, and $1<p_1,p_2,p_3,r<\infty$, then
$$
\left\| B_{\frac{1}{|\xi-\eta|^\alpha} m(\xi,\eta,\sigma)}(f_1,f_2,f_3) \right\|_r \lesssim \|f_1\|_{p_1} \|f_2\|_{p_2} \|f_3\|_{p_3}\quad\mbox{for $\frac{1}{p_1}+\frac{1}{p_2} + \frac{1}{p_3} -\frac{1}{2\alpha} = \frac{1}{r}$}.
$$

(ii) If $m(\xi,\eta,\sigma) \in \tilde{\mathcal{T}}_s$, $0<\alpha<2$, and $1<p_1,p_2,p_3,r<\infty$, then
$$
\left\| \nabla^k B_{\frac{1}{|\xi-\eta|^{\alpha}} m(\xi,\eta,\sigma)}(f_1,f_2,f_3) \right\|_r \lesssim \|\Lambda^{k+s} f_1\|_{p_1} \|f_2\|_{p_2} \|f_3\|_{p_3}\quad\mbox{for $\frac{1}{p_1}+\frac{1}{p_2} + \frac{1}{p_3} -\frac{1}{2\alpha} = \frac{1}{r}$}.
$$
\end{proposition}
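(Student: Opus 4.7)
The plan is to reduce (ii) to (i) and then combine the paraproduct-style analysis of Theorem~\ref{trilinearbound} with the Hardy-Littlewood-Sobolev inequality (Lemma~\ref{linearbound}(1)) via a Littlewood-Paley decomposition in the variable $\xi-\eta$.

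First I would derive (ii) from (i). Writing
$$
\nabla^k B_{m/|\xi-\eta|^\alpha}(f_1,f_2,f_3) = B_{\xi^k m/(|\sigma|^{k+s}|\xi-\eta|^\alpha)}(\Lambda^{k+s}f_1,f_2,f_3),
$$
I need to check that the new symbol is of the form covered by (i). The support hypothesis $|\sigma|\gtrsim |\xi|,|\eta|$ inherited from $m\in\widetilde{\mathcal{T}}_s$ makes $\xi^k m/|\sigma|^{k+s}$ a symbol in $\mathcal{T}_0$, so (i) applied to this symbol times $|\xi-\eta|^{-\alpha}$ yields (ii).

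For (i), the crucial observation is the following algebraic identity: the ``$\xi-\eta$'' slot of a trilinear pseudo-product corresponds, in physical space, to the product $f_1 f_3$, since
$$
\int \widehat{f_1}(\sigma)\widehat{f_3}(\xi-\eta-\sigma)\,d\sigma = \widehat{f_1 f_3}(\xi-\eta).
$$
Hence if $m$ depended only on $(\xi,\eta)$ one would have $B_{m/|\xi-\eta|^\alpha}(f_1,f_2,f_3) = B^{\text{bil}}_m(f_2,\Lambda^{-\alpha}(f_1f_3))$, and the conclusion would follow from Theorem~\ref{bilinearbound}, Hölder's inequality, and the Hardy-Littlewood-Sobolev inequality. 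To handle a genuine $\sigma$-dependence in $m$, I would proceed exactly as in the proof of Theorem~\ref{trilinearbound}: partition the frequency space into the regions listed in the definition of $\mathcal{T}_0$, and on each region expand $m$ in power, Fourier, or spherical-harmonic series so as to separate the variables. On each separated piece, any $\sigma$-dependent factor is absorbed as a Mihlin-Hörmander multiplier acting on $f_1$, and the remaining bilinear operator in $(f_2,f_1 f_3)$ is treated as above, with the scale relation $\alpha = 2/p-2/q$ of linear fractional integration supplying exactly the exponent loss $\alpha/2$ prescribed by the hypothesis on $p_1,p_2,p_3,r$.

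The main obstacle will be the region $|\xi_1|\sim|\xi_2|\ll|\xi_3|$ with $|\xi_1+\xi_2|\sim|\xi_1|$, where $m$ carries a genuine flag singularity in addition to the factor $|\xi-\eta|^{-\alpha}$. In this region the separation-of-variables trick above no longer suffices, because the flag structure couples two of the Fourier slots. My plan is to first perform a Littlewood-Paley decomposition $|\xi-\eta|^{-\alpha} \sim \sum_j 2^{-j\alpha}\theta_j(\xi-\eta)$, verify (after appropriate rescaling) that $\theta_j(\xi-\eta)\, m$ has uniform flag-symbol bounds of the type covered by Theorem~\ref{theoflag}, and then apply that theorem on each dyadic piece. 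The weights $2^{-j\alpha}$ are then summed by replacing the naive triangle-inequality bound $\sum_j 2^{-j\alpha}\|\cdot\|_r$ (which would diverge at the endpoint) with a Littlewood-Paley square-function bound (Theorem~\ref{LP}) applied to the dyadic decomposition of $f_1 f_3$ in the variable $\xi-\eta$. This effectively implements Hardy-Littlewood-Sobolev inside the flag-trilinear estimate and closes the argument.
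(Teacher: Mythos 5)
Your proposal is correct and takes essentially the same route as the paper's proof: the key identity $B_{|\xi-\eta|^{-\alpha}}(f_1,f_2,f_3)=f_2\,\Lambda^{-\alpha}(f_1f_3)$ combined with H\"older's inequality and the fractional integration estimate of Lemma~\ref{linearbound}, the reduction of a general symbol in $\mathcal{T}_0$ to this model case by the frequency partition, separation of variables, and paraproduct/flag machinery of Theorems~\ref{theoflag}, \ref{bilinearbound} and \ref{trilinearbound}, and the deduction of (ii) from (i) through the support property of $\widetilde{\mathcal{T}}_s$. Your dyadic treatment of the flag region, reassembled through square-function bounds so as to recover the Hardy--Littlewood--Sobolev gain of $\alpha/2$ (the correct reading of the exponent relation), is just an explicit instance of the paper's ``follows the pattern of Theorem~\ref{theoflag}'' reduction to the case $m=1$.
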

\begin{proof}
The proof follows the pattern of the proofs of Theorem~\ref{theoflag}, Theorem~\ref{bilinearbound}, and Theorem~\ref{trilinearbound}. In the end, it thus somehow reduces to the case $m=1$, for which the estimate is clear since
$$
B_{|\xi-\eta|^{-\alpha}}(f_1,f_2,f_3) = f_2 \,\frac{1}{\Lambda^{\alpha}} (f_1 f_3) 
$$
and thus, by H\"older's inequality and Lemma~\ref{linearbound},
$$
\left\| B_{\frac{1}{|\xi-\eta|^{\alpha}}}(f_1,f_2,f_3) \right\|_r \lesssim \left\| f_2 \right\|_{p_2} \left\| \frac{1}{\Lambda^{\alpha}} (f_1 f_3) \right\|_{\frac{p_2-r}{p_2r}} \lesssim  \|f_1\|_{p_1} \|f_2\|_{p_2} \|f_3\|_{p_3}.
$$
\end{proof}

\section{ Proof of proposition \ref{prop:elliptic}}
\label{ap:e}

Let $a = (x, z) \in \Omega$ and $b = (y, h(y) ) \in S$.
Then $\psi_{\CH}$ can be represented by the double layer potential
\[
\psi_{\CH} (a) = \int\limits_S \mu (b)N 
\cdot \nabla G (a -b) dS (b)
=\frac12 \mu (b_0) +  \int\limits_S (\mu (b)- \mu (b_0) ) N 
\cdot \nabla G (a -b) dS (b) 
\]
where $G(a-b) = \frac 1{4\pi} |a-b|^{-1}$
is the Newtonian potential and $b_0$ is an arbitrary point on $S$.  
By defining $K (x,y) = \sqrt{1+|\nabla h(y)|^2}N(b) \cdot \nabla G (b_0-b)$
for $b_0= (x, h(x) ) \in S$ and $b = (y, h(y) ) \in S$
then 
\begin{equation}\label{e3}
\begin{cases}
K (x,y) = \frac{{- \nabla} h(y) \cdot (x-y) + h(x) -h (y) }{ 4\pi (|x-y|^2 + (h(x) -h(y))^2) ^{\frac 32}}\\
\int |K (x,y) |(1+ |x-y|^{\frac 12 }) \, dg
\le \|h\|_{W^{2, \infty} }
+ \|\nabla h\|_{L^p }
\lesssim \varepsilon_0
\end{cases}
\end{equation}
for $2<p<4$ and from standard singular integral calculations \cite{fo95} as $a\to b_0$
\begin{equation}
\label{eq:e2}
\frac 12\, \mu (x)
+ \int \mu (y) K (x,y) dy
= \psi (x)  .
\end{equation}
From \eqref{e3} it is clear that $K$ maps $L^\infty\to L^\infty$ and $\dot{C}^{\alpha} \to \dot{C}^{\alpha} $ for  $0< \alpha \le \frac12$.   Therefore we can solve for $\mu$ by a Neumann series expansion to obtain
\begin{equation}\label{eq:a2}
\begin{cases}
\|\mu \|_{L^\infty }
\lesssim \|\psi\|_{L^\infty }\\
\|\mu\|_{\dot{C}^{\alpha} }
\lesssim \|\psi\|_{\dot{C}^{\alpha} }
\end{cases}
\end{equation}
To obtain estimates on derivatives of $\mu$, we change variables from $y$ to  $z=x-y$ in  \eqref{eq:e2}  and write $J(x,z)=K(x,x+z)$.  Since 
 $\p_x J(x,z)$ satisfies inequality \eqref{e3}
with one additional derivative on $h$, i.e.,
\[
\int | \p_xJ (x,z) |
( 1 + |z|^{\frac 12 } ) dz
\lesssim \|h\|_{W^{3, \infty} }
+ \| \nabla h\|_{W^{1,p}} \lesssim \varepsilon_o,
\]
then differentiating  \eqref{eq:e2}  with respect to $x$ gives 
\[
\frac 12 \p\mu(x) + \int \p\mu(x-z)J(x,z)dz + \int \mu(x-z)\p_x J(x,z)dz =\p \psi(x)
\]
which implies
$ \| \p \mu \|_{L^\infty }
\lesssim \|\p \psi \|_{L^\infty} 
+ \|\mu \|_{\dot{C}^{\frac 12}} \lesssim \| \p \psi \|_{L^\infty}  + \|\psi \|_{\dot{C}^{\frac 12}} $ by \eqref{eq:a2}. 
Repeating the above argument twice we obtain
\[
\|\p \mu \|_{C^2 }
\lesssim \|\p \psi \|_{C^2}
+ \|\psi \|_{\dot{C}^{\frac 12}}.
\]
Next we estimate $\CN\psi$.  To do this we  fix a point $b_0\in S$ and  use normal coordinates in a neighborhood of $S$  to  restrict $a$ near the boundary to the line  $a = b_0 + \nu N (b_0 )$. Thus 
\[
N(b_0)\cdot \nabla \psi_{\CH} (a)
= \int\limits_S (\mu(b) -\mu(b_0))
 D^2 G(N(b), N(b_0))(a-b) dS (b) .
\]
For $|b-b_0|$ large and $\nu$ small  $|D^2G(a-b)|\lesssim |b-b_0|^{-3}$ and thus the above integral can be bounded by $\|\mu \|_{\dot{C}^{\frac 12}} $.   For $|b-b_0|$ small we write

\[
N(b_0) = \theta(b,b_0)N(b) + \gamma(b,b_0) \tau \qquad \text{where} \quad  \tau \in T_{b_0}S.
\]

The term involving $\tau$ is integrable due to the vanishing  of $\gamma(b_0,b_0)$.   By repeating the argument that led to inequality \eqref{eq:a2} we obtain 

\[
| \int\limits_S (\mu(b) -\mu(b_0))D^2 G(N(b),\gamma \tau)(a-b) dS (b) | \lesssim 
\|\mu\|_{\dot{C}^{1/2}} \lesssim \|\psi\|_{\dot{C}^{1/2}}.
\]

The kernel of the remaining term 
\[
I = \int\limits_S (\mu(b) -\mu(b_0))D^2 G(N(b), N(b))(\nu N(b_0) +b_0-b) dS (b)
\]
is hypersingular as $\nu \to 0$ and can be dealt with by using the identity

\[
0 = \De G 
= \De_{S} G
+ \kappa N 
\cdot \nabla G
+ D^2 G (N, N)
\]
 for $\nu < 0$.  This allows us to re-express $I$ as
\[
I   = \int\limits_{S} \CD \mu(b) \CD G(\nu N(b_0) +b_0-b)
-  (\mu(b) -\mu(b_0)) \kappa N(b) \cdot \nabla G(\nu N(b_0) +b_0-b) \, dS 
\]
which is a singular kernel and can be bounded as before 
\[
\| I \| \lesssim  \| \p \mu \|_{\dot{C}^{\frac 12}  }
+ \| \mu  \|_{\dot{C}^{\frac12} } \lesssim \| \p \psi \|_{\dot{C}^{\frac 12}}
+ \| \psi \|_{\dot{C}^{\frac12} }.
\]

By repeating the above argument after applying 
tangential derivatives to $N(b_0)\cdot\nabla \psi$ we obtain
\[
\|  \CN \psi \|_{W^{2,\infty}(S) }
\le \| \CD \psi \|_{C^3 }
+ \| \psi \|_{\dot{C}^{\frac12} }.
\]
This proves inequality \eqref{eq:1}.

\section{Estimates on the remainder term $R$}
\label{ap:f}
Recall that $R$ is the remainder term defined at the beginning of Section~\ref{decay}. More explicitly, 
recalling that $u = h+ i\Lambda^{1/2} \psi$, $R$ consists of the terms of order 4 and higher in $u$ of the nonlinearity
\begin{equation}
\label{NL}
G(h) \psi + i \Lambda^{1/2} \left[ -\frac{1}{2}|\nabla \psi|^2 + \frac{1}{2(1 + |\nabla h|^2)} \left( G(h)\psi + \nabla h \cdot \nabla \psi \right)^2 \right].
\end{equation}
The following proposition gives bounds on $R$:
\begin{proposition}
\label{propR} If $\|u\|_{W^{3,\infty}}$ is small enough,
\begin{equation}
\begin{split}
& (i) \left\|\nabla^k R \right\|_r \lesssim \left\| \nabla^{k+2} u \right\|_p \left\| \nabla^{k+2} u \right\|_q \left\| u \right\|_{W^{3,\infty}}^2 \;\;\;\;\;\;\;\mbox{if $\frac{1}{p} + \frac{1}{q} = \frac{1}{r}$ and $1 \leq p,q,r < \infty$} \\
& (ii) \left\| x R \right\|_2 \lesssim \|\langle x \rangle u \|_2 \|u\|_{W^{3,\infty}}^3. 
\end{split}
\end{equation}

\begin{remark} Both of the above estimates say that $R$ behaves roughly like a four-fold product of some derivatives of $u$. Both of them are far from being optimal, except for one point: in estimate $(ii)$, the weighted norm on the right-hand side does not carry any derivatives.
\end{remark}

\end{proposition}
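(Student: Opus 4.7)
The strategy is to first extract a tractable multilinear expansion of $R$, then prove (i) by Hölder and the multilinear machinery of appendices~\ref{appendquad}--\ref{appendcub}, and finally deduce (ii) by distributing the weight through that multilinear structure.

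\textbf{Step 1 (Expansion of $R$).} I would invoke the analyticity of $G(h)$ as a function of $h$ near zero, established in \cite{cn00}: $G(h)\psi = \sum_{j\geq 0} G^{(j)}(h)\psi$, where $G^{(j)}$ is a multilinear pseudo-product of $j$ copies of $h$ and one of $\psi$, each one whose symbol fits into a class like $\mathcal{T}_s$ from appendix~\ref{appendcub} (with $s\leq 5/2$ after accounting for the $\Lambda^{1/2}$ in $u=h+i\Lambda^{1/2}\psi$). Plugging this expansion into~(\ref{NL}), also expanding the factor $(1+|\nabla h|^2)^{-1}$ as a geometric series, and subtracting off the linear, quadratic, and cubic terms already listed in~(\ref{aaa}), exhibits $R$ as
\[
R = \sum_{n\geq 4} M_n(u,\dots,u),
\]
where each $M_n$ is an $n$-linear pseudo-product whose symbol is of the type considered in the appendices, and convergence in $W^{3,\infty}$ is ensured by $\|u\|_{W^{3,\infty}}$ being small.

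\textbf{Step 2 (Proof of (i)).} Fix $n\geq 4$ and apply the boundedness statements of appendix~\ref{appendcub} (and its obvious $n$-linear extensions) together with Hölder. Distribute the $k+2$ derivatives on two factors, and estimate the remaining $n-2$ factors in $W^{3,\infty}$. The two extra derivatives on the chosen pair absorb the order $\leq 5/2$ of the symbol of $M_n$, while the $W^{3,\infty}$-smallness of the other factors delivers the $\|u\|_{W^{3,\infty}}^{n-2}$ factor. Summing the geometric series in $n$ under the smallness assumption gives (i).

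\textbf{Step 3 (Proof of (ii)).} Write $\mathcal{F}(xR) = -i\partial_\xi \widehat{R}(\xi)$ and, for each $M_n$, apply $\partial_\xi$ under the integral defining the multilinear pseudo-product. Two kinds of terms arise. Either $\partial_\xi$ falls on one of the $\widehat{u}$-arguments via the chain rule; then one obtains a term of the form $M_n(u,\ldots, xu,\ldots,u)$ (the weight sitting on a single factor, without derivatives on it), which by Hölder is bounded by $\|\langle x\rangle u\|_2 \|u\|_{W^{3,\infty}}^{n-1}$ exactly as in Step~2. Or $\partial_\xi$ falls on the symbol itself; Lemma~\ref{derivts} and its higher-order analogue decompose the derivative into a symbol of lower order plus singular factors of type $|\xi_i|^{-1}$ that can be absorbed into the factors of $u$ (via fractional integration, Proposition~\ref{fifs}), yielding terms controlled by $\|u\|_2\|u\|_{W^{3,\infty}}^{n-1}$. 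Summing over $n$ gives (ii).

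\textbf{Main obstacle.} The delicate point is (ii): the right-hand side carries only $\|\langle x\rangle u\|_2$ with no derivatives on the weighted factor, so the argument must carefully route all derivatives to the $L^\infty$ factors and never place both a derivative and the weight on the same copy of $u$. This is exactly what the Lemmas~\ref{derivbs}--\ref{derivts} and the pseudo-product formalism of the appendices are designed to do, but verifying that the resulting low-order symbols are genuinely bounded (in particular that no irreducible singularity of type $|\xi|^{-1}$ remains after pairing with the factors from the expansion) is the part that requires the most bookkeeping.
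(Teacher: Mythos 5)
Your plan diverges from the paper's proof, which is carried out entirely in physical space: the paper represents $\psi_\CH$ by a single layer potential with density $\rho$, expands the kernels in powers of $h$ to get operators $K_n$ with explicit kernels $\frac{|h(x)-h(y)|^{2n}}{|x-y|^{2n+1}}$, inverts by a Neumann series, and proves the weighted bounds kernel by kernel (splitting $|x-y|\lesssim 1$ from $|x-y|\gtrsim 1$, Taylor-expanding $h(x)-h(y)$ near the diagonal, and invoking the David--Journ\'e $T1$ theorem for the resulting Calder\'on-commutator-type singular integrals). As written, your Fourier-side route has two genuine gaps. First, Step~1 presupposes that every term of the expansion of $G(h)\psi$ of order $n\geq 4$ is an $n$-linear pseudo-product whose symbol lies in an ``obvious $n$-linear extension'' of the classes $\mathcal{B}_s$, $\mathcal{T}_s$, with operator bounds growing at most geometrically in $n$ so that the series can be summed under the smallness assumption. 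The paper's classes are defined and proved bounded only in the bilinear and trilinear case, tailored to the explicit symbols $m_1,\dots,m_4$; for the higher-order terms no such symbol description is available, the relevant operators near the diagonal are Calder\'on commutators rather than smooth Coifman--Meyer multipliers (hence the paper's recourse to the $T1$ theorem), and obtaining uniform $C^n$ constants is essentially the entire content of Claim~\ref{pansy}, which your argument assumes rather than proves.

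Second, Step~3 fails at exactly the point you flag as the ``main obstacle.'' The factors in any expansion of the nonlinearity are not $u$ but derivatives of $h$ and $\psi$ (at least $\Lambda\psi$, $\nabla\psi$, $\nabla h$), i.e.\ each $\psi$-factor carries at least half a derivative more than $u=h+i\Lambda^{1/2}\psi$. So when $\partial_\xi$ falls on a factor you obtain something like $x\Lambda\psi$, not $xu$, and H\"older does not give $\|\langle x\rangle u\|_2\|u\|_{W^{3,\infty}}^{n-1}$ ``exactly as in Step~2'': one must first commute the weight past the extra nonlocal $\Lambda^{1/2}$. This is precisely where the paper works hardest --- its paraproduct Lemma~\ref{lemma-III} bounding $\|x\nabla^A\Lambda f\,\nabla^B g\|_2$ in terms of $\|\langle x\rangle\Lambda^{1/2}f\|_2$, the splitting of $\psi$ into low and high frequencies, and the integration by parts writing $\Lambda\psi=\nabla_y R\psi$ so that the weight ultimately lands on $\Lambda^{1/2}\psi$ with no derivatives, giving $\|\langle x\rangle\Lambda\psi_{\mathrm{low}}\|_2+\|\langle x\rangle R\psi_{\mathrm{high}}\|_2\lesssim\|\langle x\rangle\Lambda^{1/2}\psi\|_2$. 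Without an analogous mechanism, your Step~3 does not yield the stated right-hand side of (ii), which is exactly the feature the remark singles out (no derivatives on the weighted norm).
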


We will only prove $(ii)$, the estimate $(i)$ being much easier. Furthermore, we observe that if the bound $(ii)$ can be proved with $R$ replaced by the fourth-order terms of $G(h) \psi$, it follows easily for $R$. Indeed, a look at the nonlinearity~(\ref{NL}) shows that $R$ essentially consists of products of $G(h)$ with derivatives of $u$, but this is easily treated.
Thus we will in the following prove $(ii)$ with $R$ replaced by terms of order four and higher of~$G(h)$.

\bigskip

Given a harmonic function $\psi_{\CH}$ in $\Omega$, it can be represented 
via a single layer potential $\rho$ as\footnote{Note that in our representation $\rho$  differs from the standard  $\rho$  by a factor of $\sqrt{1 + |\nabla h|^2}$.}
\begin{equation}
\label{eqrho}
\psi_{\CH} (x,z)  = \frac{1}{2\pi} \int \rho (y)
(|x-y|^2 + |z - h(y)|^2 )^{-1/2} \,dy \quad \mbox{if $(x,z) \in \Omega$}.
\end{equation}
Then
\begin{equation}
\label{eqGh}
G(h) \psi =  \rho - \frac{1}{2\pi}
\int \rho (y) \frac{\nabla h(x) \cdot(x-y) + h(y) - h(x)}{( |x-y|^2 + |h(x)  
-h(y)|^2 )^{3/2} }\, dy
\end{equation}
Notice that the kernel is very similar to the one given in (\ref{e3})

Expanding~(\ref{eqrho}) in $h$, we get after evaluating on the boundary $(x,h(x))$ and
applying $\Lambda$ 
\begin{align*}
\Lambda \psi (x) &= \rho(x) + \sum_{n\ge 1} \alpha_{n}
\Lambda \int \rho (y) \frac{|h(x) - h(y)|^{2n} }{|x-y|^{2n +1} } \, dy\\
&\overset{def}{=}\rho + {\sum K_{n} (\rho) }
\end{align*}
(recall that $\Lambda = \Lambda$). Using Neumann series, $\rho$ can be expressed as
\[
\rho  =  \Lambda \psi + \sum_{N \in \mathbb{N}, (m_1 \dots m_N) \in \mathbb{N}^N} \beta_{m_1 \dots m_N}
K_{m_1} \dots K_{m_N} \Lambda \psi
\]
where $|\beta_{m_1 \dots m_N }| \le C^{m_1 + \dots + m_N}$.
In order to obtain weighted estimates, we will need the following claim, whose proof we postpone for the moment.
\begin{claim} \label{pansy} If $n\geq 1$,\label{cl}
\begin{subequations} 
\begin{align}
\| x K_{n} \Lambda \psi \|_2 & \leq C^{n} \left\| \langle x\rangle u \right\|_2 \|u\|^{2n -1}_{W^{3,\infty}} \label{**} \\
\| \langle x \rangle K_{n} Z \|_2 & \leq C^{n} \| \langle x \rangle Z\|_2 \, \|h \|^{2n  -1 }_{W^{3,\infty}} \label{abcde}
\end{align}
\end{subequations}
\end{claim}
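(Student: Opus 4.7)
My plan is to reduce both bounds to an unweighted $L^2$-boundedness estimate on $K_n$, combined with commutator identities that move the weight $\langle x \rangle$ (or the factor $x$) through $K_n$.

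First I would establish the operator-norm estimate
$$\|K_n \rho\|_2 \lesssim C^n \|\rho\|_2 \|h\|_{W^{3,\infty}}^{2n-1}.$$
Writing $K_n = \alpha_n \Lambda \circ T_n$ with $T_n$ the integral operator of kernel $\mathcal{K}(x,y) = (h(x)-h(y))^{2n}/|x-y|^{2n+1}$, the divided-difference identity $(h(x)-h(y))/|x-y| = \frac{x-y}{|x-y|}\cdot\int_0^1 \nabla h(y+t(x-y))\,dt$ gives the pointwise bound $|\mathcal{K}(x,y)| \lesssim \|\nabla h\|_\infty^{2n}|x-y|^{-1}$. Since $|x-y|^{-1}$ is the kernel of a constant multiple of $\Lambda^{-1}$ on $\RR^2$, the operator $T_n$ maps $L^2$ into $\dot{H}^1$ with norm $\lesssim C^n \|\nabla h\|_\infty^{2n}$, and composing with $\Lambda$ yields the $L^2 \to L^2$ bound with the same factor. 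Using $\|\nabla h\|_\infty \leq \|h\|_{W^{3,\infty}} \leq \varepsilon_0 \leq 1$, one power of this small norm may be discarded to obtain the stated exponent $2n-1$.

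For \eqref{abcde} I would then apply the commutator decomposition $\langle x\rangle K_n Z = K_n(\langle x\rangle Z) + [\langle x\rangle, K_n] Z$. The first term is controlled directly by the $L^2$ bound above applied to $\langle x \rangle Z$. For the commutator, split $[x,\Lambda T_n] = [x,\Lambda] T_n + \Lambda [x,T_n]$: the operator $[x,\Lambda]$ coincides up to constants with a Riesz transform $\nabla \Lambda^{-1}$, hence is $L^2$-bounded, while the kernel of $[x,T_n]$ is $(x-y)\mathcal{K}(x,y) = (h(x)-h(y))^{2n}/|x-y|^{2n}$, where the factor $(x-y)$ has cancelled one order of singularity. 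This improvement allows $\Lambda \circ [x,T_n]$ to be estimated by the same pointwise kernel / Riesz-transform reasoning as $K_n$ itself.

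For \eqref{**} the scheme is analogous: $xK_n\Lambda\psi = K_n(x\Lambda\psi) + [x,K_n]\Lambda\psi$, with the commutator handled as just described. To convert $\|x\Lambda\psi\|_2$ into a quantity controlled by $\|\langle x\rangle u\|_2$, I would use $\Lambda\psi = -i\Lambda^{1/2}\operatorname{Im} u$ and write $x\Lambda\psi = -i\Lambda^{1/2}(x\operatorname{Im} u) - i[\Lambda^{1/2},x]\operatorname{Im} u$, exploiting that $[\Lambda^{1/2},x]$ is an operator of order $-1/2$ (hence $L^2$-bounded), and that the half-derivative loss in the first summand can be absorbed by trading one of the remaining $\|h\|_{W^{3,\infty}}$ factors in the bound on $K_n$ for a slightly stronger norm. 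The hard part will be the careful bookkeeping of these derivative losses at each commutator step — particularly because $\Lambda^{1/2}$ has a non-smooth Fourier symbol — to ensure that each commutator trades only one $\|h\|_{W^{3,\infty}}$ factor, preserving the exponent $2n-1$ needed for the Neumann series $\sum_n C^n \|h\|_{W^{3,\infty}}^{2n-1}$ to converge under the smallness hypothesis on $h$.
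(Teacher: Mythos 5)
Your reduction hinges on the assertion that, because the kernel $\mathcal{K}(x,y)=(h(x)-h(y))^{2n}/|x-y|^{2n+1}$ is pointwise dominated by $\|\nabla h\|_\infty^{2n}|x-y|^{-1}$, the operator $T_n$ maps $L^2$ into $\dot H^1$, so that $K_n=\alpha_n\Lambda T_n$ is bounded on $L^2$. That step is not valid: a size bound on a kernel controls no derivatives of $T_nZ$ (it only dominates $|T_nZ|$ pointwise by a fractional integral), and one cannot commute the pointwise majorization with $\Lambda$. The operator $\Lambda T_n$ (equivalently $\nabla T_n$, whose kernel has the critical singularity $|x-y|^{-2}$ in $\RR^2$) is a Calder\'on--commutator type singular integral; its $L^2$ boundedness is precisely the nontrivial core of \eqref{abcde}, and in the paper it is obtained from the David--Journ\'e $T1$ theorem, using that the truncated kernel is antisymmetric and that $T1$ is an $L^\infty$ (hence $BMO$) function when $h\in W^{3,\infty}$. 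Your commutator steps inherit the same problem (``the same pointwise kernel / Riesz-transform reasoning as $K_n$ itself''), and in addition the weight-commutator kernel $(h(x)-h(y))^{2n}/|x-y|^{2n}$ gains no decay at infinity from $\|\nabla h\|_\infty$ alone (for $n=1$ it is borderline even using $\|h\|_\infty$); this is why the paper splits into $|x-y|\lesssim 1$ and $|x-y|\gtrsim 1$, exploits the cancellation of $h(x)-h(y)$ through Taylor's formula in the near region, and uses explicit weighted convolution estimates (together with $L^2(\langle x\rangle^2 dx)\hookrightarrow L^{4/3}$) in the far region.

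The second gap is the passage from $\|x\Lambda\psi\|_2$ to $\|\langle x\rangle u\|_2$. After writing $x\Lambda\psi=\Lambda^{1/2}(x\operatorname{Im}u)+[x,\Lambda^{1/2}]\operatorname{Im}u$, the first term costs half a derivative on the weighted factor, and this cannot be repaired by ``trading one $\|h\|_{W^{3,\infty}}$ factor for a slightly stronger norm'': the right-hand side of \eqref{**} allows no derivatives at all on $\|\langle x\rangle u\|_2$, so the half derivative must be moved onto the unweighted, $L^\infty$-type factors, uniformly in $n$. Accomplishing this is exactly the role of the paraproduct Lemma~\ref{lemma-III} (the weight always lands on $g$ or on $\Lambda^{1/2}f$, never on higher derivatives of the weighted factor), supplemented in the near region by the low/high frequency splitting of $\psi$ and an integration by parts replacing $\Lambda\psi_{\text{high}}$ by $R\psi_{\text{high}}$, which yields $\|\langle x\rangle\Lambda\psi_{\text{low}}\|_2+\|\langle x\rangle R\psi_{\text{high}}\|_2\lesssim\|\langle x\rangle\Lambda^{1/2}\psi\|_2$. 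As written, your outline defers both of these points to ``bookkeeping'', but they are the substance of the proof rather than routine details.
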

Assuming the claim, Proposition~\ref{propR} follows. Indeed, the contribution of $\frac{1}{2} \rho$ in~(\ref{eqGh}) to $R$ is given by
$$
\sum_{m_1 + \dots + m_N  \geq 2} \beta_{m_1  \dots m_N}
K_{m_1} \dots K_{m_N} \Lambda \psi,
$$
and this can be estimated using the claim. In order to estimate the contribution of the second summand of~(\ref{eqGh}), the same procedure can be applied: expand the kernel in powers of $h$, and use the bound on the expansion of $\rho$ in powers of $h$ which follows from the claim.

\subsection*{Proof of~(\ref{**}) in Claim~\ref{pansy}} 

\noindent \emph{Step 1: A paraproduct inequality.}
\begin{lemma}\label{lemma-III}
We have for  $A \geq 0$ and $B \geq 1$,
\begin{equation}\label{III}
\| x \nabla^A \Lambda f
\nabla^B g\|_2
\le \|\Lambda^{1/2} f\|_{W^{A+B+1, \infty}}\,
\|\langle x\rangle g\|_2
+ \|\langle x\rangle \Lambda^{1/2} f \|_2\,
\| g\|_{W^{A+B+1, \infty}}\,
\end{equation}
\begin{equation}\label{III2}
\| x \nabla^{A+1}  f
\nabla^B g\|_2
\le \|  f\|_{W^{A+B+2, \infty}}\,
\|\langle x\rangle g\|_2
+ \|\langle x\rangle  f \|_2\,
\| g\|_{W^{A+B+2, \infty}}\,
\end{equation}
\end{lemma}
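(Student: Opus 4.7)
The plan is to prove both paraproduct inequalities by a standard Littlewood--Paley paraproduct decomposition, placing the weight $x$ on the high-frequency factor in each piece and summing across scales with the Littlewood--Paley square and maximal function estimates of Theorem~\ref{LP}.

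Concretely, I decompose
$$
\nabla^A\Lambda f\cdot\nabla^B g \;=\; T_{\nabla^A\Lambda f}(\nabla^B g) + T_{\nabla^B g}(\nabla^A\Lambda f) + \Pi(\nabla^A\Lambda f,\nabla^B g),
$$
into the two paraproducts $T_FG=\sum_j P_{<j-2}F\cdot P_jG$ and the diagonal piece $\Pi=\sum_{|j-k|\le 2}P_jF\cdot P_kG$. For the paraproduct $T_{\nabla^A\Lambda f}(\nabla^B g)$, where $g$ is the high-frequency factor, I transfer the weight to $g$ via $x(FG)=F\cdot(xG)$, write $xP_j\nabla^B g = P_j\nabla^B(xg) + [x,P_j\nabla^B]g$, and note that the commutator has Fourier symbol $-i\partial_\xi(\xi^B\theta(\xi/2^j))$, a smooth multiplier of order $B-1$ localized at frequency $2^j$ and therefore bounded by $2^{j(B-1)}\|\widetilde P_j g\|_2$ in $L^2$. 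Since the summands remain spectrally localized near $|\xi|\sim 2^j$, the square function estimate reduces matters to a scale-by-scale H\"older bound, into which Bernstein redistributes the $A+B+1$ total derivatives between the low-frequency $L^\infty$ block of $f$ and the high-frequency $L^2$ block of $g$; the factorization $\nabla^A\Lambda=\Lambda^{1/2}\cdot\nabla^A\Lambda^{1/2}$ introduces the crucial half-derivative and the bound $\|\Lambda^{1/2}f\|_{W^{A+B+1,\infty}}\|\langle x\rangle g\|_2$ falls out. The symmetric paraproduct $T_{\nabla^B g}(\nabla^A\Lambda f)$ produces the second term on the right-hand side of~(\ref{III}) by the same reasoning (with the weight now transferred to $f$ through $\nabla^A\Lambda P_j$), and the diagonal piece $\Pi$ is handled identically since its two factors live at comparable scale.

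Inequality~(\ref{III2}) is proved by the same argument with $\nabla^{A+1}$ in place of $\nabla^A\Lambda$; since the symbol $\xi^{A+1}$ is a smooth polynomial, differentiating it in Fourier space produces no singular factor, and the cleaner bound with $\|f\|_{W^{A+B+2,\infty}}$ and $\|\langle x\rangle f\|_2$ follows directly --- no fractional $\Lambda^{1/2}$ compensation is needed, at the cost of a full integer derivative instead of the half-derivative loss in~(\ref{III}).

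The main obstacle --- and precisely the reason for $\Lambda^{1/2} f$ rather than $f$ on the right-hand side of~(\ref{III}) --- is the non-smoothness of the symbol $|\xi|$ of $\Lambda$: when the weight $x$ is transferred through $\nabla^A\Lambda$ in Fourier space, differentiating $\xi^A|\xi|$ produces a singular Riesz-type factor $\xi/|\xi|$ that is unbounded on $L^\infty$. The half-derivative $\Lambda^{1/2}$ on the right-hand side supplies exactly the $2^{j/2}$ Bernstein gain at each frequency $2^j$ needed to absorb this singularity and close the sum over scales.
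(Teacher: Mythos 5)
Your framework (paraproduct decomposition plus the Littlewood--Paley square and maximal function estimates) matches the paper's, but the decisive choice --- where to place the weight --- is made the wrong way around, and the argument fails at exactly that point. In the piece $\sum_j P_{<j-2}(\nabla^A\Lambda f)\,P_j(\nabla^B g)$, where $g$ is the high-frequency factor, you move $x$ onto $g$ and aim for the bound $\|\Lambda^{1/2}f\|_{W^{A+B+1,\infty}}\|\langle x\rangle g\|_2$. But this right-hand side carries \emph{no} derivatives on the weighted function, so in your main term $\sum_j P_{<j-2}(\nabla^A\Lambda f)\,P_j\nabla^B(xg)$ you must pay $\|P_j\nabla^B(xg)\|_2\lesssim 2^{jB}\|\widetilde P_j(xg)\|_2$, and nothing absorbs the factor $2^{jB}$: the low-frequency factor only contributes $\|P_{<j-2}\nabla^A\Lambda f\|_\infty$, which is bounded but has no decay in $j$, so neither the naive sum nor the square-function version (which needs $\sup_j 2^{jB}\|P_{<j-2}\nabla^A\Lambda f\|_\infty<\infty$) closes. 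Bernstein cannot ``redistribute'' derivatives from the high-frequency block onto the low-frequency block; a derivative cost at scale $2^j$ can only be absorbed by a factor whose norm supplies $2^{-j(A+B+1)}$-type decay, i.e.\ by the high-frequency factor measured in $W^{A+B+1,\infty}$. In fact the bound you claim for this single piece is false: fix $f$ with $\nabla^A\Lambda f\simeq 1$ on a large ball and let $g$ be a frequency-$2^j$ wave packet centered away from the origin; the left-hand side grows like $2^{jB}$ while your right-hand side stays bounded in $j$. The symmetric piece, where you put the weight on the high-frequency $f$, fails the same way with $2^{j(A+1/2)}$ in place of $2^{jB}$. (The diagonal piece is fine either way, since there both factors sit at comparable scale.)

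The paper makes precisely the opposite pairing: in each off-diagonal piece the weight goes on the \emph{low}-frequency factor, via $x\nabla^B P_{<j}g=\nabla^{B-1}\widetilde P_{<j}g+\nabla^B P_{<j}(xg)$ and $x\nabla^A\Lambda P_{<j}f=\Lambda^{A-\frac12}\widetilde P_{<j}\Lambda^{1/2}f+\nabla^A\Lambda^{1/2}P_{<j}(x\Lambda^{1/2}f)$. Then the derivatives falling on the weighted function cost only $2^{jB}$ (resp.\ $2^{j(A+\frac12)}$), and these powers are absorbed by the high-frequency factor, e.g.\ $2^{j(A+B+1)}\|P_j f\|_\infty\lesssim 2^{-j/2}\|\Lambda^{1/2}f\|_{W^{A+B+1,\infty}}$ for $j\ge 0$, the spare half derivative providing geometric summability over the scales. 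This also corrects your diagnosis of the role of $\Lambda^{1/2}$: the commutator symbol $\partial_\xi\bigl(\xi^A|\xi|\,\Theta(\xi/2^j)\bigr)$ is harmless in $L^2$ (an order-$A$ Mihlin-type multiplier at scale $2^j$), so no ``Riesz singularity'' needs compensating; the half power is factored out so that the weight lands on $x\Lambda^{1/2}f$, the quantity actually controlled on the right-hand side, and so that the dyadic sums converge. If you simply swap your pairing --- weight on the low-frequency factor, high-order $L^\infty$ norm on the high-frequency factor --- your argument becomes the paper's proof.
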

\begin{proof} We focus on the first inequality, the second one being proved in an identical way.
Using the paraproduct decomposition, we write
\[
x \nabla^A \Lambda f \nabla^B g
= x \Big[ \sum_j \nabla^A \Lambda P_j f \nabla^B P_{\leq j} g
+ \sum_j \nabla^A \Lambda P_{<j} f \nabla^B P_j g \Big]
\]
Notice that 
\[  \begin{split}
& x \nabla^B P_{<j} g = \nabla^{B-1} \widetilde{P}_{<j} g + \nabla^B P_{<j} x g \\
& x \nabla^A \Lambda P_{<j} f  = \Lambda^{A-\frac{1}{2}} \widetilde{P}_{<j} \Lambda^{1/2} f 
+ \nabla^A \Lambda^{1/2} P_{<j} x \Lambda^{1/2} f \\
\end{split} \]
(where we denote $\widetilde P_{<j}$ for a symbol of the type $\lambda \left( \frac{D}{2^j} \right)$, with $\lambda$ smooth and supported in $B(0,C)$). Thus
$$
x\nabla^A \Lambda f \nabla^B g
= \sum_j \nabla^A \Lambda P_j f
( \nabla^{B-1} \widetilde{P}_{\leq j} g + \nabla^B P_{\leq j} x g ) + ( \Lambda^{A-\frac{1}{2}} \widetilde{P}_{<j} \Lambda^{1/2} f + \nabla^A  \Lambda^{1/2} P_{<j} x \Lambda^{1/2} f ) \nabla^B P_j g.
$$
This implies
\[  
\begin{split}
\| x\nabla^A \Lambda f \nabla^B g \|_2
& \leq \sum_j 2^{j (A+1)} \|P_j f \|_\infty
(2^{j (B-1)} \|g\|_2 + 2^{jB} \| xg \|_2 ) \\
& \;\;\;\;\;\;\;\;\;\;\;\; + ( 2^{j(A-1/2)} \left\| \Lambda^{1/2} f \right\|_2
+ 2^{j (A + \frac 12 )} \| x \Lambda^{1/2} f \|_2 )
2^{jB} \| P_j g \|_\infty\\
&\le \|\Lambda^{1/2} f \|_{W^{A +B+1 , \infty}}
\| \langle x \rangle g\|_2 + \|\langle x \rangle \Lambda^{1/2} f \|_2\, \|g\|_{W^{A+B+1, \infty}}
\end{split} \]
proving~(\ref{III}). The proof of~(\ref{III2}) is very similar.  \end{proof}
\bigskip
\noindent
\emph{Step 2: Splitting of the integral.}
To bound $K_{n} \Lambda \psi $  for $n \ge 1$,
choose a smooth cut-off function $\chi$ equal to $1$ on $B (0, 1)$  and $0$ outside of $B(0, 2)$, then
split the integral
\begin{subequations}
\begin{align}
\label{armadillo1}
K_{n} \Lambda \psi =& \Lambda \int \Lambda \psi (y)
\frac {|h(x) - h (y) |^{2n}}{|x-y|^{2n +1 }} \chi (x-y)\, dy\\
& \label{armadillo2}
+ \Lambda \int \Lambda \psi (y)
\frac {|h(x) - h (y) |^{2n}}{|x-y|^{2n +1 }} ( 1 - \chi (x-y ) )\ dy\,\,.
\end{align}
\end{subequations}
Observe that since
\( \| x \Lambda f \|_2
\le \| x \nabla f \|_2 + \| f \|_2
\)
we can replace $\Lambda\int $ by $\nabla\int$ in our estimate.

\bigskip
\noindent
\emph{Step 3: Estimate of~(\ref{armadillo2}): The case $ |x-y| \gtrsim 1$.} In this region, the cancellation contained in $h(x)- h(y)$  is not needed
thus it suffices to replace $(h(x) - h(y) )^{2n}$ by the general term given by the expansion of this power, namely $h(x)^\ell h(y)^{2n -\ell}$. We get
\[
\int \Lambda \psi (y) \nabla_x
\bigg[
( 1 -\chi (x-y) )
\frac{h(x)^\ell h(y)^{2n -\ell } }{|x-y|^{2n +1} }
\bigg]\, dy.
\]
For $\ell =0$ the above integral is  given by
\[
\int|h (y)| ^{2n} \Lambda \psi (y) \nabla_x
\frac{1 -\chi (x-y) }{|x-y|^{2n +1} }\, dy =  \Gamma * ( \Lambda  
\psi h^{2n}),
\]
where $\Gamma$ satisfies $\|\langle x\rangle\Gamma\|_{L^1}\lesssim 1$. Therefore
\[
x ( \Gamma * ( \Lambda \psi h^{2n}))
= \underbrace{ \underbrace{ (x \Gamma)}_{L^{1}} * \underbrace{(\Lambda  
\psi h^{2n})}_{L^{2}} }_{L^2}
+\underbrace{ \underbrace{\Gamma}_{L^1} * \underbrace{ (x \Lambda  
\psi h^{2n})}_{L^2} } _ {L^2}
\]
and the conclusion follows since
\[
\| x \Lambda \psi h^{2n} \|_2
\lesssim \| x h \|_2 \,
\|\Lambda \psi \|_\infty \,
\| h \|_\infty^{2n -1}.
\]
For $\ell \ge 1$ the most problematic term is when $\nabla_x$ hits  
$h(x)$, since otherwise  we have  added decay in $x-y$. It reads
$$
\int \Lambda \psi (y) ( 1 -\chi (x-y) )
\frac{\nabla h(x)  h(x)^{\ell -1 } h(y)^{2n -\ell } }{|x-y|^{2n +1} }\, dy =
  \nabla h(x)  h(x)^{\ell -1 }
\Gamma_1 * ( \Lambda \psi h^{2n -\ell }  )
$$
where $\Gamma_1$ decays at least like $|x|^{-3}$.
For $\ell =1$ we have
\[
\|x (\nabla h)( \Gamma_1 * ( \Lambda \psi h^{2n-1})\| 
_{L^2} \lesssim \|\nabla h\|_{L^\infty}\||x ( \Gamma_1 * ( \Lambda  
\psi h^{2n-1})\|_{L^2}
\]
which can be estimated by
\[
x ( \Gamma_1 * ( \Lambda \psi h^{2n-1}))
= \underbrace{ \underbrace{ (x \Gamma_1)}_{L^{4/3}} *  
\underbrace{(\Lambda \psi h^{2n-1})}_{L^{4/3}} }_{L^2}
+\underbrace{ \underbrace{\Gamma_1}_{L^1} * \underbrace{ (x \Lambda  
\psi h^{2n-1})}_{L^2} } _ {L^2}
\]
and since $L^2(\langle x\rangle^2 dx)\hookrightarrow L^{\frac 43}(dx)$, this yields the  
desired estimate as in the case $\ell=0$.
For $\ell \ge 2$ the bound follows in an even simpler way, thus we skip this case.

\bigskip
\noindent
\emph{Step 4: Estimate for~(\ref{armadillo1}): The case $|x-y| \lesssim 1$.}  
We use Taylor's formula
\[
\frac{h(y) - h(x) }{|x-y|}
= \nabla h
(x)
\frac{y-x}{|y-x|}
+\int^1_0 \nabla^2 h (x +t (y-x) )
\frac{(y-x)^2}{|y-x|}
(1-t) dt
\]
to deal with the singular integral. More precisely, we substitute the above expression for each factor $h(x)-h(y)$ in~(\ref{armadillo1}); subsequently expanding the product gives $2^{2n}$ terms. We start with the one involving
$\left( \nabla h \cdot \frac{y- x}{|y-x|} \right)^{2n}$, writing it
\begin{subequations}
\begin{align}
&\nabla_x \int \Lambda \psi (y) \frac {\chi (x-y)}{|x-y|}
\bigg(\nabla h(x) \cdot
\frac{y- x}{|y-x|} \bigg)^{2n}\, dy\\
& \label{grackle2} \quad \quad \quad\quad \quad =
\nabla_x \int ( \Lambda \psi (y) - \Lambda \psi (x) )
\frac {\chi (x-y)}{|x-y|} \bigg(\nabla h(x) \cdot
\frac{y- x}{|y-x|} \bigg)^{2n}\, dy \\
& \label{grackle3} \quad \quad \quad\quad \quad \quad \quad \quad \quad + \nabla_x \bigg(
\Lambda \psi (x)
\int \frac {\chi (x-y)}{|x-y|}
\bigg(\nabla h(x) \cdot
\frac{y- x}{|y-x|} \bigg)^{2n}\, dy \bigg) 
\end{align}
\end{subequations}
The term~(\ref{grackle3}) can be written as
\[ \begin{split}
&\nabla \bigg( \Lambda \psi (x) \sum_{N_j =1,2}
\p_{N_1} h \cdots \p_{N_{2n}} h
\int \frac{\chi (x-y)}{|x-y|} \frac{(y-x)^{N_1}}{|y-x|}
\dots \frac {(y-x)^{N_{2n}}}{|y-x|}\, dy \bigg)\\
&\quad = \nabla \big( \Lambda \psi (x)
\sum \gamma_{N_1 \cdots N_{2n}}  \p_{N_1 } h \cdots \p _{N _n} h \big)
\end{split}\]
which is easy to bound by~(\ref{III}).
To bound part~(\ref{grackle2}), we note that if $\nabla_x$ hits $\Lambda \psi (x)$ one gets
\[
\nabla \Lambda \psi (x) \int \frac {\chi (x-y)}{|x-y|}
\bigg( \nabla h(x) \cdot \frac{y- x}{|y-x|} \bigg)^{2n}\, dy\]
which is similar to part~(\ref{grackle3}). If  $\nabla_x$ hits $\nabla h(x)$ one  
gets
\[
  \int (\Lambda \psi (y) - \Lambda \psi (x)) \frac{\chi(x-y)}{|x-y|}
\bigg(\nabla^2 h(x)  \frac{y- x}{|y-x|} \bigg)
\bigg( \nabla h(x) \cdot  \frac{y- x}{|y-x|}  \bigg)^{2n-1} \, dy
\]
which can be written as a sum of terms of the type
\[
C \nabla^2 h(x) \nabla h(x)^{2n -1} \Lambda \psi \qquad\mbox{or}\qquad \nabla^2 h(x) \nabla h(x)^{2n -1} K * \Lambda \psi \qquad
  K~\text{ an integrable kernel}
\]
and thus can be treated by Lemma \ref{lemma-III}.

If $\nabla_x$ hits the function of $(x-y)$ we turn $\nabla_x$ into $- 
\nabla _y$ and integrate by parts to get
\[
\int \nabla \Lambda \psi (y)  \frac{ \chi(y-x)}{|y-x|}
\bigg( \nabla h(x) \cdot
\frac {y- x}{|y-x|} \bigg)^{2n} \,dy
\]
which can be written as a sum of terms of the type
\[
\nabla h(x)^{2n}   K * \Lambda \psi\qquad
  K~\text{an integrable kernel}
\]
and can be bounded by lemma~\ref{lemma-III}.

\medskip

All the remaining terms in the expansion of~(\ref{armadillo1}) after substituting the Taylor expansion involve at least one power of
$\int^1_0 \nabla^2 h \dots$, i.e.
\begin{gather*}
\nabla_x \int \Lambda \psi (y) \chi (x-y) |x-y|^{\ell -1}
\bigg( \int^1_0 \nabla^2 h
(x + t(y-x) ) (1-t) dt  \bigg)^\ell
\nabla h(x)^{2n -\ell } \, dy
\end{gather*}
for  $ \ell \ge 1$ (here we abused notations by writing $|x-y|^{\ell -1}$ instead of the correct multilinear expression in $x-y$ involving scalar products).
Regardless of what term is hit by $\nabla_x$ we estimate the above in the  
following manner:  1) Split $\psi$ into low and high frequencies
$$
\psi = \psi_{low} + \psi_{high} \overset{def}{=} P_{>0} \psi + P_{\geq 0} \psi;
$$
2) if $\psi$ is low frequency, estimate directly after putting the  
weight on $\psi$; 3) if $\psi$ is high frequency, remove one  
derivative from $\psi$ by integration by parts  (thus getting $R  
\psi$, $R$~Riesz transform),
then estimate after putting the  weight on $\psi$.  Thus if 
$\psi$ is low frequency, we take the derivatives of $h$ in $L^\infty$ and get the bound
\[
\| \langle x\rangle \Lambda \psi_{\text{low}} \|_2\,
\| h \|_{W^{3,\infty}}^{2n}
\]
and if $\psi$ is high frequency, integrate by parts in $y$ using $ 
\Lambda \psi = \nabla_y R \psi$
and get the bound
\[
\| \langle x \rangle R \psi_{\text{high}} \|_2\,
\| h \|^{2n} _{W^{3, \infty}}
\]
The estimate follows since
\[
\| \langle x\rangle  \Lambda \psi_{\text{low}} \|_2\,
+\| \langle x\rangle R \psi_{\text{high}} \|_2
\lesssim \|\langle x\rangle \Lambda^{1/2} \psi \|_2
\]
This completes the proof of \eqref{**}.

\subsection*{Proof of~(\ref{abcde}) in Claim~\ref{pansy}} 

The proof of the inequality~(\ref{abcde}) is very similar to that of inequality~(\ref{**}), which was the object of the previous section.
One additional ingredient is however needed, which reduces to the boundedness of the operator
$$
T : Z \mapsto \nabla \int Z(y) \frac{|h(x)-h(y)|^{2n}}{|x-y|^{2n+1}} \chi(x-y) \,dy \;\;\;\;\;\mbox{on $L^2$}.
$$
In order to prove it, observe that $T$ is a singular integral operator if $h$ belongs to $W^{2,\infty}$. By the David and Journ\'e $T1$ theorem, its boundedness over $L^2$ will follow from the belonging of $T(1)$ and $T^*(1)$ to $BMO$. Since $T$ is antisymmetric, it suffices to check that $T1$ belongs to $BMO$. But $T1$ is given by
$$
\nabla \int \frac{|h(x)-h(y)|^{2n}}{|x-y|^{2n+1}} \chi(x-y) \,dy,
$$ 
which, as one sees easily, is bounded in $L^\infty$ provided $h \in W^{3,\infty}$.

\begin{remark}  Equations \eqref{aaa}  can also be easily derive from \eqref{eqrho} and \eqref{eqGh}.  Using the fact that $\Lambda = \sum R_i\p_i$, where $R_i$'s are  the Riez potentials, one can show that
\[
\Lambda \psi(x) = \mathrm{p.v.} \frac 1\pi \int_{\RR^2} \frac{\psi(x)-\psi(y)}{|x-y|^3}dy,
\] 
 thus  from \eqref{eqrho} $
\rho = \Lambda \psi - \frac \Lambda{2}[h^2 \Lambda^2 \psi- 2h\Lambda(h \Lambda \psi) + \Lambda(h^2 \Lambda \psi)]+\dots$, and from  \eqref{eqGh}
\[
G(h)\psi=  \Lambda \psi - \nabla \cdot (h \nabla \psi) - \Lambda (h\Lambda \psi) - \frac{1}{2} [\Lambda(h^2 \Lambda^2 \psi) + \Lambda^2(h^2 \Lambda \psi) - 2\Lambda(h\Lambda(h \Lambda \psi))] + \dots ,
 \]
which is the first part of \eqref{aaa}.  The second part follows immediately from the first
\end{remark}


\begin{thebibliography}{num}

\bibitem[AM06]{am06} Ambrose, D. M.; Masmoudi, N.,
\emph{Well-Posedness of 3D Vortex Sheets With Surface Tension}.  Commun. Math. Sci.  5  (2007),  no. 2, 391--430.

\bibitem[AM07]{am07} Ambrose, D. M.; Masmoudi, N., \emph{The Zero Surface Tension Limit of
Three-Dimensional Water Waves}, accepted by Indiana Math Journal

\bibitem[BHL93]{bhl93}   Beale, J.T.;  Hou, T.;   Lowengrub, J.\emph{Growth
rates for the linearized motion of fluid inter- faces away from
equilibrium}, Comm. Pure Appl. Math., \textbf{46}, (1993)
1269--1301.

\bibitem[CL00]{cl00} Christodoulou, D.; Lindblad, H.,
\emph{On the motion of the free surface of a liquid}. Comm. Pure
Appl. Math. \textbf{53} (2000), no. 12, 1536--1602.

\bibitem[CM78]{coifmanmeyer} Coifman, R.; Meyer, Y., \textit{Au del\`a des op\'erateurs pseudo-diff\'erentiels.}  Ast\'erisque, {\bf 57}. Soci\'et\'e Math\'ematique de France, Paris, 1978.

\bibitem[CS05]{cs05}     Coutand D.;   Shkoller  S.,  \emph{Well
posedness of the free-surface incompressible Euler equations with or
without surface tension}.  Preprint.

\bibitem[CR85]{ca85} Craig,  W., \emph{An existence theory for
water waves and the Boussinesq and Korteweg-de Vries scaling
limits}.  Comm. Partial Differential Equations, \textbf{10} (1985),
no. 8, 787-1003.

\bibitem[CN00]{cn00} Craig,  W.; Nicholls, D.  \emph{Traveling two and three dimensional capillary gravity water waves}.  SIAM J. Math. Anal., \textbf{32} (2000),
no. 2, 323--359.

\bibitem[FO95]{fo95} Folland, G., \emph{Introduction to Partial Differential Equations}.  Princeton University Press, 1995.

\bibitem[GMS08]{GMS08} Germain, P.; Masmoudi, N.; Shatah, J., \textit{Global solutions for 3D quadratic Schr\"odinger equations}, accepted by IMRN. 

\bibitem[IG01] {ig01} Iguchi, T., \emph{Well-posedness of the
initial value problem for capillary-gravity waves.} Funkcial. Ekvac.
\textbf{44} (2001), no. 2, 219--241

\bibitem[KL85]{kl85} Klainerman, S., \emph{Uniform decay estimates and the Lorentz invariance of the classical wave equation}. 
Comm. Pure  Appl. Math. \textbf{38} (1985),  321-332 


\bibitem[LA05]{la05}Lannes,  D.,  \emph{Well-posedness of the
water-waves equations}.  J. Amer. Math. Soc., \textbf{18}, (2005) 605--
654.

\bibitem[MU07]{Muscalu} Muscalu, C., \emph{Paraproducts with flag singularities.} I. A case study.  Rev. Mat. Iberoam. {\bf 23} (2007),  no. 2, 705--742

\bibitem[NA74]{na74} Nalimov, V. I.,
\emph{The Cauchy-Poisson problem}. (Russian) Dinamika Splosn. Sredy
Vyp. 18 Dinamika Zidkost. so Svobod. Granicami (1974), 10--210, 254

\bibitem[OT02]{ot02} Ogawa, M., Tani, A., \emph{Free boundary problem
for an incompressible ideal fluid with surface tension}. Math.
Models Methods Appl. Sci. \textbf{12}, (2002), no. 12, 1725--1740.

\bibitem[SW02]{sw02} Schneider, G., Wayne, C. E., \emph{The rigorous
approximation of long-wavelength capillary-gravity waves}. Arch.
Ration. Mech. Anal., \textbf{162} (2002), no. 3, 247--285.

\bibitem[SH85]{sh85} Shatah, J., \emph{Normal forms and quadratic nonlinear Klein  Gordon equations}.  Comm. Pure Appl. Math.  \textbf{38}  (1985),  no. 5, 685--696

\bibitem[SZ06]{sz06} Shatah, J.; Zeng, C., \emph{Geometry and
a priori estimates for free boundary problems of the Euler's
equation}. Comm. Pure Appl. Math. \textbf{51} (2008), no. 5, 698 - 744.

\bibitem[SZ09]{sz09} Shatah, J.; Zeng, C., \emph{Local well-posedness for the  fluid interface problems}. Preprint.

\bibitem[S93]{stein} Stein, E., \emph{Harmonic analysis: real-variable methods, orthogonality, and oscillatory integrals.} Princeton Mathematical Series, {\bf 43}. Monographs in Harmonic Analysis, III. Princeton University Press, Princeton, NJ, 1993.

\bibitem[SS99]{sulemsulem} Sulem, C.; Sulem, P.-L., \textit{The nonlinear Schr\"odinger equation. Self-focusing and wave collapse.} Applied Mathematical Sciences, {\bf 139}. Springer-Verlag, New York, 1999.

\bibitem[WU97]{wu97} Wu, S., \emph{Well-posedness in Sobolev spaces
of the full water wave problem in 2-D}. Invent. Math.
\textbf{130} (1997), no. 1, 39--72

\bibitem[WU99]{wu99}  Wu, S.,\emph{ Well-posedness in Sobolev spaces
of the full water wave problem in 3-D}.  J. Amer. Math. Soc.
\textbf{12} (1999), no. 2, 445--495.

\bibitem[WU09]{wu09} Wu, S., \emph{Almost global wellposedness of the 2-D full water wave problem}, accepted by Inventiones Mathematicae

\bibitem[YO82]{yo82} Yosihara, H.,
\emph{Gravity waves on the free surface of an incompressible perfect
fluid of finite depth}. Publ. Res. Inst. Math. Sci. \textbf{18}
(1982), no. 1, 49--96.

\bibitem[ZZ06]{zz06} Zhang, P., Zhang, Z. \emph{On the free boundary
problem of 3-D incompressible Euler equations}. Preprint.

\end{thebibliography}
\end{document}